\newcounter{stmcounter}[section]
\numberwithin{equation}{section}
\newcounter{thmMaincounter}
\theoremstyle{plain}
\newtheorem{cor}[stmcounter]{Corollary}
\newtheorem{thm}[stmcounter]{Theorem}
\newtheorem{thmM}[thmMaincounter]{Theorem}
\newtheorem{prop}[stmcounter]{Proposition}
\newtheorem{lem}[stmcounter]{Lemma}
\theoremstyle{definition}
\newtheorem{defin}[stmcounter]{Definition}
\newtheorem{con}[stmcounter]{Construction}
\newtheorem{ex}[stmcounter]{Example}
\newtheorem{rem}[stmcounter]{Remark}
\theoremstyle{remark}
\DeclareMathOperator{\pt}{pt}
\DeclareMathOperator{\codim}{codim}
\DeclareMathOperator{\Hom}{Hom}
\DeclareMathOperator{\rk}{rk}
\DeclareMathOperator{\odd}{odd}
\DeclareMathOperator{\com}{compl}
\DeclareMathOperator{\Tor}{Tor}
\DeclareMathOperator{\ord}{ord}
\DeclareMathOperator{\Top}{Top}
\DeclareMathOperator{\Ab}{Ab}
\DeclareMathOperator{\hocolim}{hocolim}
\DeclareMathOperator{\colim}{colim}
\DeclareMathOperator{\cat}{cat}
\DeclareMathOperator{\Funct}{Funct}
\DeclareMathOperator{\str}{star}
\DeclareMathOperator{\Fl}{Fl}
\DeclareMathOperator{\Gr}{Gr}
\DeclareMathOperator{\Sgr}{Sgr}
\def\G{\Gamma}
\def\GX{\Gamma(X)}
\def\GY{\Gamma(Y)}
\def\ZQ{R}
\def\bv{\bar{\varphi}}
\def\Co{\mathbb C}
\def\Ro{\mathbb R}
\def\Qo{\mathbb Q}
\def\Zo{\mathbb Z}
\newcommand{\HP}{\mathbb{H}P}
\newcommand{\CP}{\mathbb{C}P}
\newcommand{\ca}[1]{\mathcal{#1}}
\newcommand{\Hr}{\widetilde{H}}
\newcommand{\dd}{\partial}
\begin{document}

\title{How is a graph not like a manifold?}

\author{A.\,A.~Ayzenberg}
\address{Faculty of computer science, National Research University Higher School of Economics, Russian Federation}
\email{ayzenberga@gmail.com}

\author{M.~Masuda}
\address{Osaka City University Advanced Mathematical Institute, Japan, and Faculty of Computer Science, National Research University Higher School of Economics, Russian Federation}
\email{mikiyamsd@gmail.com}

\author{G.\,D.~Solomadin}
\address{Faculty of computer science, National Research University Higher School of Economics, Russian Federation}
\email{grigory.solomadin@gmail.com}



\subjclass{Primary: 57S12, 55N91, 13F55, 06A06 Secondary: 55P91, 55U10, 55T25, 57R91, 13H10, 55R20}

\thanks{The article was prepared within the framework of the HSE University Basic Research Program}

\keywords{Torus action, invariant submanifold, homology of posets, GKM theory, homotopy colimits}

\begin{abstract}
For an equivariantly formal action of a compact torus $T$ on a smooth manifold $X$ with isolated fixed points we investigate the global homological properties of the graded poset $S(X)$ of face submanifolds. We prove that the condition of $j$-independency of tangent weights at each fixed point implies $(j+1)$-acyclicity of the skeleta $S(X)_r$ for $r>j+1$. This result provides a necessary topological condition for a GKM graph to be a GKM graph of some GKM manifold. We use particular acyclicity arguments to describe the equivariant cohomology algebra of an equivariantly formal manifold of dimension $2n$ with an $(n-1)$-independent action of $(n-1)$-dimensional torus, under certain colorability assumptions on its GKM graph. This description relates the equivariant cohomology algebra to the face algebra of a simplicial poset. Such observation underlines certain similarity between actions of complexity one and torus manifolds.
%
\end{abstract}

\maketitle

\section{Introduction}\label{secIntro}

Toric topology studies actions of a compact torus $T^k$ on closed smooth manifolds $X^{2n}$ in terms of the related combinatorial structures. The classical examples are given by smooth toric varieties which are classified by their simplicial fans, and their topological analogues --- quasitoric manifolds classified by characteristic pairs. In both cases the torus $T^n$ acts on a manifold $X^{2n}$ with $H^{\odd}(X^{2n})=0$, and it happens that the poset $S(X)$ of $T^n$-invariant submanifolds in $X^{2n}$ is a certain cell subdivision of a topological disc. The poset $S(X)$ has nice acyclicity properties: not only $S(X)$ is acyclic (which is obvious since it has the greatest element), but also its skeleta, the links of its simplices, and other natural related objects are acyclic as well. These acyclicity properties are intimately related to the Cohen--Macaulay property of Stanley--Reisner algebras, which are isomorphic to equivariant cohomology of~$X$.

In this paper we study torus actions of arbitrary complexity. For a general action of $T=T^k$ on $X=X^{2n}$ having isolated fixed points, we consider the poset of face submanifolds $S(X)$. In~\cite{AyzCherep}, the local structure of $S(X)$ was studied: it was proved that $S(X)_{\geqslant s}$ is a geometric lattice for any element $s\in S(X)$, in particular, such ``local'' poset is acyclic due to the result of Bj\"{o}rner~\cite{Bjorner}.

In this paper we concentrate on global topological structure of $S(X)$, however, restrict the class of actions under consideration. We assume that torus actions satisfy the following properties.

\begin{enumerate}
  \item The fixed point set $X^T$ is finite and nonempty.
  \item The action is \emph{equivariantly formal}. With item 1 satisfied, equivariant formality is equivalent to the condition $H^{\odd}(X)=0$.
  \item We fix an integer $j$, and call the action \emph{$j$-independent}, if every $\leqslant j$ tangent weights, at every fixed point $x\in X^T$ are linearly independent. In the previous works~\cite{AyzMasEquiv,AyzCher} we used a different terminology: actions with this property were called actions in $j$-general position.
\end{enumerate}

With these properties satisfied, we prove the following result.

\begin{thmM}\label{thmMainIntroAcycl}
Assume an action of $T$ on $X$ is equivariantly formal and $j$-inde\-pen\-dent. Then the following acyclicity conditions hold.
\begin{enumerate}
  \item For any integer $r>0$, the $r$-skeleton $S(X)_r=\{t\in S(X)\mid \rk t\leqslant r\}$ is $\min(\dim S(X)_r-1,j+1)$-acyclic.
  \item For any element $s\in S(X)$, the lower order ideal $S(X)_{<s}=\{t\in S(X)\mid t<s\}$ is $\min(\dim S(X)_{<s}-1,j+1)$-acyclic.
\end{enumerate}
\end{thmM}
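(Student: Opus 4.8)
The plan is to collapse the two statements into one and then run a double induction. First I would reduce part (2) to part (1). For $s\in S(X)$ of rank $\rho$, the lower order ideal $S(X)_{<s}$ is exactly the poset of proper face submanifolds of the $T$-manifold $s$ itself, so $S(X)_{<s}=S(s)\setminus\{s\}=S(s)_{\rho-1}$. The restricted action on $s$ has finite nonempty fixed set, is again equivariantly formal, and is $j$-independent (the tangent weights of $s$ at a fixed point form a subset of those of $X$). Since $\dim S(X)_{<s}=\rho-1$, part (2) for $s$ is precisely part (1) applied to the smaller manifold $s$ at skeletal level $\rho-1$, and the bounds match: $\min(\dim S(X)_{<s}-1,j+1)=\min(\rho-2,j+1)$. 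Hence it suffices to prove (1), by an outer induction on $\dim X$ and an inner induction on $r$.

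For the inner step, the order complex $\Delta(S(X)_r)$ is obtained from $\Delta(S(X)_{r-1})$ by attaching, for each rank-$r$ element $t$, the cone $t*\Delta(S(X)_{<t})$ along its base $\Delta(S(X)_{<t})\subseteq\Delta(S(X)_{r-1})$. As two distinct rank-$r$ elements are incomparable, these cones meet only in $\Delta(S(X)_{r-1})$ and carry distinct apexes, so the relative chain complex splits, $H_n(S(X)_r,S(X)_{r-1})\cong\bigoplus_{\rk t=r}\Hr_{n-1}(S(X)_{<t})$, and the long exact sequence of the pair reads
\[
\cdots \to \Hr_n(S(X)_{r-1}) \to \Hr_n(S(X)_r) \to \bigoplus_{\rk t=r}\Hr_{n-1}(S(X)_{<t}) \to \Hr_{n-1}(S(X)_{r-1}) \to \cdots.
\]
The base cases $r=0$ (nonempty fixed set) and $r=1$ (connectedness of the GKM graph $S(X)_1$, which is exactly where equivariant formality and connectedness of $X$ enter) are clear. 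In the interior range $n\le\min(r-2,j+1)$ both flanking terms vanish: the left one by the inner hypothesis and the right one by the reduction above combined with the outer induction on dimension; hence $\Hr_n(S(X)_r)=0$ there. The only degree left is the near-top one $n=r-1$, which lies in the claimed range precisely when $r\le j+2$, and here the sequence degenerates — the term $\bigoplus_{\rk t=r}\Hr_{r-2}(S(X)_{<t})$ vanishes, so one only extracts a surjection $\Hr_{r-1}(S(X)_{r-1})\twoheadrightarrow\Hr_{r-1}(S(X)_r)$ and the naive induction stalls.

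Closing this near-top degree is the main obstacle, and it forces a finer, link-by-link (Reisner-type, partial Cohen--Macaulay) analysis of $\Delta(S(X)_r)$ in place of the crude Mayer--Vietoris. The link of a chain $s_0<\cdots<s_k$ splits as a join of a lower factor $\Delta(S(X)_{<s_0})$, middle open intervals, and a truncated upper factor. By \cite{AyzCherep} every upper interval $S(X)_{\ge s}$ is a geometric lattice, hence Cohen--Macaulay, and skeleta of Cohen--Macaulay complexes remain so; thus the middle and upper factors are acyclic up to their top dimension, and the link inherits its acyclicity range from the lower factor $\Delta(S(X)_{<s_0})$. This is exactly where $j$-independence is used: since any $\le j$ tangent weights are independent, the flats of rank below $j$ are all the subsets, so the low-rank lower intervals are Boolean and their acyclicity reaches precisely degree $\min(\cdot,j+1)$.

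I expect the genuine difficulty to be the formulation and proof of the inheritance lemma that converts these link estimates into a bound on $\Delta(S(X)_r)$ itself, i.e. that the global acyclicity is governed by the worst lower-interval link, thereby upgrading the surjection above to $\Hr_{r-1}(S(X)_r)=0$ in the stated range. Carrying this out while simultaneously running the outer dimension induction is the crux; equivariant formality plays the structural role of supplying the base-case connectivity and of guaranteeing that each face submanifold again lies in the class, so that the reduction of part (2) and the outer induction are legitimate at every stage.
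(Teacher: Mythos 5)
Your reduction of part (2) to part (1) is exactly the paper's own (the paper observes that $S(X)_{\leqslant s}$ is isomorphic to $S(Y)$ for the face submanifold $Y$ corresponding to $s$, and that the induced action on $Y$ stays in the class), and your long exact sequence bookkeeping correctly locates where a naive induction stalls: the near-top degree $n=r-1$. The gap is that your plan for closing that degree cannot work. The ``inheritance lemma'' you hope for --- that acyclicity of $\Delta(S(X)_r)$ in the stated range is governed by the acyclicity of its lower-interval links --- is false as a combinatorial principle: local link conditions never control global top-degree homology (a triangulated torus has every vertex and edge link acyclic below its dimension, yet $\widetilde{H}_1\neq 0$). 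Worse, everything your induction feeds on (Boolean low-rank lower intervals, geometric-lattice upper intervals from \cite{AyzCherep}, connectivity of the GKM graph, the outer induction over face submanifolds) is data reconstructible from the GKM graph alone; but, as the paper stresses in the introduction, Theorem~\ref{thmMainIntroAcycl} is precisely a \emph{nontrivial necessary condition} on such data for the graph to be realized by an equivariantly formal manifold. If the acyclicity followed from this local combinatorics, the theorem would impose no condition at all. Equivariant formality must therefore enter globally, and in your scheme it only enters through base cases and the legitimacy of the inductions, which is not enough.

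The paper's proof supplies exactly the missing global input, in two steps that bypass induction entirely. First, Proposition~\ref{propAcyclicityMain} shows that the orbit space skeleton $Q_r$ is $\min(r-1,j+1)$-acyclic; this is where equivariant formality does real work, via the exactness of the Atiyah--Bredon--Franz--Puppe sequence, which wipes out the $0$-th row of the spectral sequence of the orbit type filtration. Second, Theorem~\ref{thmAcyclAll} transfers this statement from $Q_r$ to $|S(X)_r|$ by a homotopy colimit comparison: the diagram of faces $D_Q$ over $\cat(S(X)_r)$ is cofibrant, so $\hocolim_{S(X)_r}D_Q\simeq\colim_{S(X)_r}D_Q=Q_r$, while the constant diagram satisfies $\hocolim_{S(X)_r}\ast=|S(X)_r|$; since every face is $(j+1)$-acyclic (Corollary~\ref{corFacesAcyclic}, again a consequence of the orbit-space results rather than of poset combinatorics), the map $D_Q\to\ast$ is an entrywise homology $(j+2)$-equivalence, and the homological Strong Homotopy Lemma (Lemma~\ref{lemStrongHomotopyHomologyLm}) shows that $|S(X)_r|$ inherits the acyclicity of $Q_r$. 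If you want to repair your proposal, replace the hoped-for inheritance lemma with these two ingredients; your Mayer--Vietoris analysis then becomes unnecessary.
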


This theorem gives a necessary condition for a GKM graph to be a GKM graph of some (equivariantly formal) GKM manifold for $j$ large enough. Indeed, if $\G$ is a $j$-independent GKM graph, then every $j-1$ edges adjacent to a vertex span a face, as follows e.g. from the results of~\cite{AyzCherep}. Therefore the poset $S(X)_{j-1}$ can be reconstructed only from a GKM graph data. If the poset is not $(j-2)$-acyclic, then the graph does not correspond to any equivariantly formal manifold.

It should be noted that the problem of reconstruction a manifold with the given fixed points data, in particular with the given equivariant 1-skeleton, is well known in equivariant topology, see~\cite{CGK}. The classical approach to solve this problem is via Atiyah--Bott--Berline--Vergne formula (the instance of the localization formula in cohomology). Sometimes it is possible to reconstruct a manifold with torus action geometrically --- by extending the action from lower dimensional strata to higher dimensions. This approach, however, does not guarantee that the resulting manifold is equivariantly formal, and that GKM theory itself is applicable. This is an important issue we wanted to address in our paper.

As an application of Theorem~\ref{thmMainIntroAcycl}, we describe equivariant cohomology algebra for complexity one actions in general position. Acyclicity arguments are applied to extend the study of complexity one actions in general position, started in~\cite{AyzMasEquiv}. An action of $T^{n-1}$ on $X^{2n}$ is called \emph{an action in general position}, if it is $(n-1)$-independent.

\begin{thmM}\label{thmMainIntroFaceRing}
Consider an equivariantly formal action of $T^{n-1}$ on $X^{2n}$ in general position. Assume that $n\geqslant 5$, $\pi_1(X)=1$, and the GKM graph of the action is bipartite. Then there exists a simplicial poset $S(\GX)$ and a regular linear element $\eta$ in the face ring $\Qo[S(\GX)]$, such that
\[
H^*_T(X;\Qo)\cong \Qo[S(\GX)]/(\eta)
\]
as $H^*(BT;\Qo)$-algebras.
\end{thmM}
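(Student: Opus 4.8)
The plan is to imitate the Masuda--Panov description of the equivariant cohomology of a torus manifold, where $H^*_T$ is the face ring of the simplicial poset of the orbit space, and to account for the complexity being one by dividing that face ring by a single linear element rather than by a full linear system of parameters. The expected source of $\eta$ is the unique (up to scale) linear relation $\sum_{i=1}^n c_i\alpha_i=0$ among the $n$ tangent weights $\alpha_1,\dots,\alpha_n$ at a fixed point, which exists precisely because $\dim X-2\dim T=2$.

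First I would assemble the simplicial poset $S(\GX)$ from the face submanifolds, ordered by reverse inclusion so that complex-codimension-one faces are the atoms, fixed points are the maximal elements, and rank equals complex codimension; this is the order-dual of the poset $S(X)$ of Theorem~\ref{thmMainIntroAcycl}, and every lower interval below a fixed point $x$ is the Boolean lattice on the $n$ tangent directions at $x$, which is exactly the simplicial-poset axiom. Here the faces spanned by $\leqslant n-2$ edges are reconstructed from $\G$ via \cite{AyzCherep}, but the atoms themselves --- the complex-codimension-one faces, which are torus manifolds --- are spanned by $n-1$ edges and therefore require a genuinely global closing-up argument; I expect bipartiteness of $\G$, together with $\pi_1(X)=1$ and $n\geqslant5$, to guarantee both that these faces exist as embedded invariant submanifolds and that they are coherently oriented, so that their equivariant Poincar\'e duals $[\sigma]_T$ are defined over $\Qo$.

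Next I would define the $H^*(BT;\Qo)$-algebra map $\varphi\colon\Qo[S(\GX)]\to H^*_T(X;\Qo)$ by $v_\sigma\mapsto[\sigma]_T$ and check that it is well defined: under the GKM injection $H^*_T(X;\Qo)\hookrightarrow\bigoplus_{x\in X^T}H^*(BT;\Qo)$ the class $[\sigma]_T$ restricts at a fixed point to the product of the weights normal to the face $\sigma$ when that point lies on $\sigma$, and to $0$ otherwise, and the face-ring relations $v_\sigma v_\tau=v_{\sigma\wedge\tau}\sum_\rho v_\rho$ then reduce to the transversal intersection identities for these duals, verified componentwise. The relation $\sum_i c_i\alpha_i=0$ globalizes, using the coherent orientations, to a degree-two element $\eta$ with $\varphi(\eta)=0$, so $\varphi$ descends to $\bv\colon\Qo[S(\GX)]/(\eta)\to H^*_T(X;\Qo)$.

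Finally I would prove $\bv$ is an isomorphism by a free-module count. Equivariant formality makes $H^*_T(X;\Qo)$ free over $H^*(BT;\Qo)$ of rank $|X^T|$. On the combinatorial side, the skeletal and lower-ideal acyclicity of Theorem~\ref{thmMainIntroAcycl} for $j=n-1$, dualized and combined with the geometric-lattice acyclicity of the links $S(X)_{\geqslant s}$ from \cite{AyzCherep}, should feed the Reisner-type criterion to show $\Qo[S(\GX)]$ is Cohen--Macaulay of Krull dimension $n$; general position then guarantees that the $n-1$ images of a basis of $H^2(BT;\Qo)$ together with $\eta$ restrict to a basis on each maximal simplex, hence form a length-$n$ linear system of parameters. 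Consequently $\eta$ is a regular element and $\Qo[S(\GX)]/(\eta)$ is a free $H^*(BT;\Qo)$-module of rank $\dim_\Qo H^*(X;\Qo)=|X^T|$. Since $H^*(BT;\Qo)$ is an integral domain, it remains only to prove $\bv$ surjective --- that the duals $[\sigma]_T$ generate the GKM congruence subring --- whereupon a surjection of free modules of equal finite rank is forced to be an isomorphism. The hard part, I expect, is exactly this surjectivity together with the borderline Cohen--Macaulay estimate: Theorem~\ref{thmMainIntroAcycl} provides acyclicity only up to homological degree $j+1=n$, which for a poset of dimension $n-1$ leaves no margin, and it is here that the hypotheses $n\geqslant5$ and $\pi_1(X)=1$ must do the work of upgrading the borderline vanishing to genuine regularity of $\eta$ and of pinning down the troublesome codimension-one faces.
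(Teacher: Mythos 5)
Your overall architecture (augment the face poset with codimension-one atoms, send face-ring generators to Thom classes, quotient by the single linear relation coming from the tangent weights) matches the paper's, but two of your steps conceal genuine gaps, and they are exactly the two places where the paper does real work. First, the atoms. You want the complex-codimension-one faces to exist as embedded invariant submanifolds (``torus manifolds'') produced by a closing-up argument, with $[\sigma]_T$ their equivariant Poincar\'e duals. But in complexity one in general position there are \emph{no} face submanifolds of dimension $2(n-1)$: by Lemma~\ref{lemTechJgeneral}, every proper face has rank $\leqslant n-2$ and complexity zero, hence dimension $\leqslant 2(n-2)$, and the only face of rank $n-1$ is $X$ itself. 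The atoms the paper adds to $S(X)$ are not submanifolds but purely combinatorial subgraphs of $\GX$, and producing them is where the hypotheses $n\geqslant 5$, $\pi_1(X)=1$ and bipartiteness are consumed: one shows the connection's monodromy along any $2$-face is trivial on transversal edges (Lemma~\ref{lemm:1-1}, which needs $(n-1)$-independence with $n-1\geqslant 4$) and on the edges of the face (evenness, Lemma~\ref{lemm:InnerMonod}), and that $2$-faces generate $\pi_1(\GX)$ (using $1$-acyclicity of $Q_2$ together with $\pi_1(X)=1$); the resulting globally trivial monodromy yields a proper $n$-coloring, and the facets are the components of the color-complement subgraphs (Proposition~\ref{propColorability}, Lemma~\ref{lemColorImpliesFacets}). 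Your proposal names the right hypotheses but supplies no mechanism; and since a facet need not be carried by any submanifold (compare Examples~\ref{exGrassmann} and~\ref{exHP}, where facets fail to exist at all), the classes $\varphi(v_\sigma)$ must be \emph{defined} by the localization formula you wrote down, i.e.\ as combinatorial Thom classes, not derived from Poincar\'e duality.

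Second, the isomorphism. Your injectivity route --- Cohen--Macaulayness of $\Qo[S(\GX)]$ via a Reisner-type criterion fed by Theorem~\ref{thmMainIntroAcycl}, then a linear-system-of-parameters and rank count --- cannot be run with the available inputs. Theorem~\ref{thmMainIntroAcycl} concerns $S(X)$, which does not contain the facet atoms at all; it gives only $(n-3)$-acyclicity of $S(X)_{n-2}$, one degree short of the $(n-2)$-acyclicity Reisner demands of an $(n-1)$-dimensional poset; and the links of the facet atoms are face posets of non-geometric subgraphs, to which neither part of Theorem~\ref{thmMainIntroAcycl} applies. Indeed, in the paper Cohen--Macaulayness (in fact Gorenstein*-ness) of $S(\GX)$ is a \emph{corollary} of the cohomology isomorphism (Remark~\ref{remGorenstein}), not an ingredient of its proof. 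Moreover the surjectivity of $\varphi$, which you defer as ``the hard part,'' is genuinely needed and is settled by the module-generation argument of \cite[Prop.7.4]{MasPan}. The paper's injectivity argument (Proposition~\ref{propLinearForm}) is more elementary than yours and avoids all global combinatorics: compare $\Qo[\GX]$ and $H^*_T(X)$ with their images in $\bigoplus_{p\in X^T}H^*(BT)$; the face ring injects because it is an algebra with straightening law, $H^*_T(X)$ injects by equivariant formality, and general position forces every coefficient $c_i$ of $\eta$ to be nonzero, so each local quotient $\Qo[\tau_i\mid i\in I(p)]/\bigl(\sum_{i\in I(p)}c_i\tau_i\bigr)$ is exactly $H^*(BT)$; hence the localized map is an isomorphism and $\bv$ is injective in all degrees.
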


The poset $S(\GX)$ is constructed from $S(X)$ by adding some extra elements and reversing the order. The poset $S(\GX)$ is Gorenstein*: this follows from the Gorenstein property of the algebra $H^*_T(X;\Qo)$, as explained in detail in Remark~\ref{remGorenstein}. In some sense, Theorem~\ref{thmMainIntroFaceRing} tells that under certain assumptions complexity one actions in general position behave much like restrictions of $T^n$-actions on $X^{2n}$ to actions of generic subtori $T^{n-1}\subset T^n$, at least from the viewpoint of equivariant cohomology.

The paper has the following structure. In Section~\ref{secFaces} we recall the necessary definitions: equivariant formality, face submanifolds, and faces of a torus action. In Section~\ref{secAcyclicity} we prove several statements about homological properties of orbit spaces of torus actions. Most of the arguments there follow the lines of~\cite{AyzMasEquiv} however we recall the key arguments. In Section~\ref{secPosets} we prove Theorem~\ref{thmMainIntroAcycl}. In Section~\ref{secGKMcomb} we provide all necessary definitions from the GKM theory and prove a combinatorial statement, that for $n\geqslant 5$, the $n$-valent GKM graph of a complexity one action in general position is bipartite if and only if it is $n$-colorable. This is a generalization of the result of Joswig~\cite{Joswig} which asserts a similar proposition for the edge skeleta of simple polytopes (hence can be viewed as a statement about GKM graphs of complexity zero). The ability to properly color a GKM graph implies that the graph determines a simplicial poset in a way similar to complexity zero. In Section~\ref{secFaceRings} we recall the notion of the face ring of a simplicial poset, and derive Theorem~\ref{thmMainIntroFaceRing} from the GKM description of $H^*_T(X)$.

\section{Preliminaries}\label{secFaces}

\subsection{Orbit type filtrations}

In this section we define the faces of an action and list their main properties.

Let a torus $T=T^k$ act on a topological space $X$ which is always assumed connected. Let $\Sgr(T)$ denote the set of all closed subgroups of $T$. For a point $x\in X$, $T_x\in \Sgr(T)$ denotes the stabilizer (the stationary subgroup) of $x$, and $Tx\subset X$ --- the orbit of $x$. In the following we assume that $X$ is a $T$-CW-complex (see~\cite[Def.1.1]{AdDav}). In particular, this holds for smooth torus actions on smooth manifolds. 


\begin{con}\label{conExactPartition}
For an action of $T$ on $X$ we define the \emph{fine subdivision} of $X$ by orbit types:
\[
X=\bigsqcup_{H\in \Sgr(T)}X^{(H)},
\]
where $H$ is a closed subgroup of $T$, and $X^{(H)}=\{x\in T\mid T_x=H\}$. Moreover, for $H\in \Sgr(T)$ define
\[
X^H=\bigsqcup_{\tilde{H}\supseteq H}X^{(\tilde{H})}=\left\{x\in X\mid hx=x\,\,\forall h\in H\right\}.
\]
Therefore $X^H$ is the set of $H$-fixed points of $X$.
\end{con}

Each closed subgroup of a compact torus $T\cong T^k$ is isomorphic to a direct product of some torus (the continuous component) and finite abelian group (the finite component). We say that a $T$-action on $X$ has connected stabilizers, if all stabilizers $T_x$ are connected, i.e. finite components are trivial.

When working with homology we will follow the assumption that for actions with connected stabilizers the coefficients are taken in $\Zo$ (or any field), but in general the coefficients are taken in $\Qo$.

\begin{con}\label{conFiltrationOrbitType}
For a $T$-action on a topological space $X$ consider the fil\-tra\-ti\-on
\begin{equation}\label{eqEquivFiltrGeneralX}
X_0\subset X_1\subset\cdots \subset X_k
\end{equation}
where $X_i$ is the union of all orbits of the action having dimension $\leqslant i$. In other words,
\[
X_i=\{x\in X\mid \dim T_x\geqslant k-i\}=\bigsqcup_{H\in \Sgr(T),\dim H\geqslant k-i}X^{(H)}
\]
according to the natural homeomorphism $Tx\cong T^k/T_x$. Filtration~\eqref{eqEquivFiltrGeneralX} is called \emph{the orbit type filtration}, and $X_i$ \emph{the equivariant $i$-skeleton} of $X$. Each $X_i$ is $T$-stable. Orbit type filtration induces the filtration on the orbit space $Q=X/T$:
\begin{equation}\label{eqEquivFiltrGeneralQ}
Q_0\subset Q_1\subset\cdots \subset Q_k,\quad\mbox{where }Q_i=X_i/T
\end{equation}
\end{con}

\begin{rem}
If $y\in Q$ is an orbit of the action, then stabilizer subgroup $T_y$ is defined as the stabilizer subgroup $T_x$ for any representative $x\in y$. This is well-defined due to commutativity of the torus. In the following, when we speak about fixed points of the action, we abuse the notation by denoting with the same letter a fixed point and its image in the orbit space.
\end{rem}

\subsection{Smooth actions}\label{subsecSmoothActions}

\begin{defin}
The lattice $N=\Hom(T^k,S^1)\cong \Zo^k$ is called \emph{the weight lattice}, and its dual lattice $N^*=\Hom(S^1,T^k)$ is called \emph{the lattice of 1-dimensional subgroups}.
\end{defin}

There are canonical isomorphisms
\begin{equation}\label{eqWeightSubgrpLatticeHomologyIdentific}
\begin{split}
\Hom(T^k,S^1)&\cong H^1(T^k;\Zo)\cong H^2(BT^k;\Zo),\\ \Hom(S^1,T^k)&\cong H_1(T^k;\Zo)\cong H_2(BT^k;\Zo).
\end{split}
\end{equation}

Let $X$ be a smooth closed connected orientable manifold, and $T$ acts smoothly and effectively on $X$. If $x\in X^T$ is a fixed point, we have an induced representation of $T$ in the tangent space $\tau_xX$ called \emph{the tangent representation}. Let $\alpha_{x,1},\ldots,\alpha_{x,n}\in \Hom(T^k,S^1)\cong \Zo^{k}$ be the weights of the tangent representation at $x$, which means by definition that
\[
\tau_xX\cong V(\alpha_{x,1})\oplus\cdots\oplus V(\alpha_{x,n})\oplus \Ro^{\dim X-2n},
\]
where $V(\alpha)$ is the standard 1-dimensional complex representation given by $tz=\alpha(t)\cdot z$, $z\in \Co$, and the action on $\Ro^{\dim X-2n}$ is trivial (see~\cite[Cor.I.2.1]{Hsiang}). It is assumed that all weight vectors $\alpha_{x,i}$ are nonzero since otherwise the corresponding summands contribute to $\Ro^{\dim X-2n}$. If there is no $T$-invariant complex structure on $X$, then there is an ambiguity in the choice of signs of vectors $\alpha_i$. For the statements of this paper the choice of signs is nonessential. We can also assume that the weight vectors $\alpha_{x,1},\ldots,\alpha_{x,n}$ linearly span the weight lattice $\Hom(T^k,S^1)$ (otherwise there would exist an element $\lambda$ of the dual lattice $\Hom(S^1,T^k)$ such that $\langle\alpha_{x,1},\lambda\rangle=0$, which implies that the corresponding 1-dimensional subgroup $\lambda$ lies in the noneffective kernel). This observation implies that, if the action has fixed points, we have
\begin{equation}\label{eqIneqForComplexity}
\dim X\geqslant 2n\geqslant 2k.
\end{equation}

Each fixed point $x\in X^T$ has a neighborhood equivariantly diffeomorphic to the tangent representation $\tau_xX$. In particular, $x\in X^T$ is isolated if and only if $\dim X=2n$ and all tangent weights $\alpha_{x,1},\ldots,\alpha_{x,n}$ are nonzero.

\begin{defin}\label{definComplexity}
Let $T$ act effectively on a smooth manifold $X$, and the fixed point set $X^T$ is finite and nonempty. The nonnegative integer $\com X=\frac{1}{2}\dim X-\dim T$ is called \emph{the complexity of the action}.
\end{defin}

If the action is noneffective, the symbol $\com X$ denotes the complexity of the corresponding effective action: the action of the quotient by the noneffective kernel.

\begin{con}\label{conInvarSubmfds}
For each closed subgroup $H\subset T$, the subset $X^H$ is a closed smooth submanifold in $X$. This submanifold is $T$-stable as follows from the commutativity of $T$. A connected component of $X^H$ is called an \emph{invariant} submanifold.
\end{con}

\begin{con}\label{conFacesOrbitSpaceGeneral}
For a smooth $T$-action on $X$, consider the canonical pro\-jec\-tion $p\colon X\to Q$ to the orbit space, and the filtration~\eqref{eqEquivFiltrGeneralQ} on the orbit space. The closure of a connected component of $Q_i\setminus Q_{i-1}$ is called \emph{a face} $F$ if it contains at least one fixed point. The number $i$ is called \emph{the rank} of a face $F$, it is equal to the dimension of a generic $T$-orbit in $F$.
\end{con}

\begin{rem}
In the case of locally standard action of $T=T^n$ on $X=X^{2n}$, the orbit space $Q=X/T$ is a nice manifold with corners, so it has a naturally defined notion of faces. The preimages of faces of $Q$ are all the invariant submanifolds of $X$. Faces of $Q$ (in the sense of torus actions) are only those faces of $Q$ (in the sense of manifold with corners) which have vertices.
\end{rem}

\begin{rem}
In general, the notion of a face of an orbit space is determined by the action, so the face subdivision resembles an additional structure on the orbit space $Q$. Knowledge of topology of $Q$ itself is not sufficient to define the faces. Even in complexity~0, $Q$ is just a topological manifold with boundary so one can only distinguish whether the point of $Q$ is a free orbit or not depending on whether it lies in the interior or on the boundary. There are examples in higher complexity, when even free and non-free orbits can't be distinguished in the orbit space~\cite{AyzCompl}. However, we slightly abuse the notation by using the term ``face of the orbit space'' as if it were defined intrinsically by the topology of $Q$.
\end{rem}
%
%
%

The following lemma is proved in~\cite{AyzCherep}.

\begin{lem}\label{lemFaceSubmanifold}
For a $T$-action on $X$, the full preimage $X_F=p^{-1}(F)$ of any face $F\subset Q$ is an invariant submanifold. In particular, it is a smooth closed submanifold of $X$.
\end{lem}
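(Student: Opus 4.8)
The plan is to identify, for a face $F$ of rank $i$, a closed subgroup $H\subset T$ such that $X_F$ is exactly the connected component of $X^H$ containing the (nonempty) set of fixed points lying in $F$; since every component of $X^H$ is a closed smooth invariant submanifold by Construction~\ref{conInvarSubmfds}, this yields the claim. Write $C$ for the connected component of $Q_i\setminus Q_{i-1}$ whose closure is $F$, and let $\wt C=p^{-1}(C)$ be the corresponding connected component of $X_i\setminus X_{i-1}$; thus every point of $\wt C$ has a $T$-orbit of dimension exactly $i$, i.e.\ a stabilizer of dimension $k-i$.

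First I would show that the stabilizer is constant along $\wt C$. Near any point $x\in\wt C$ the slice theorem gives an equivariant model $T\times_{T_x}V$, in which the set of nearby points whose orbit still has dimension $i$ is precisely the fixed-point set of the identity component of $T_x$ acting on the slice; consequently the continuous part of the stabilizer is locally constant on $X_i\setminus X_{i-1}$. As $\wt C$ is connected, there is a single subtorus $H\subset T$ of dimension $k-i$ with $T_x=H$ for all $x\in\wt C$ (using the connected-stabilizers assumption; in general one works with the continuous component and $\Qo$-coefficients, as in the standing conventions). In particular $\wt C\subset X^H$.

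Next, fix a fixed point $x_0\in F$, which exists by the definition of a face, and let $M$ be the connected component of $X^H$ containing $x_0$. Since $X^H$ is closed and $\wt C$ is connected with $x_0\in\overline{\wt C}$, the connected set $\overline{\wt C}\subset X^H$ lies in $M$; because the orbit projection $p$ is proper with connected fibres one has $X_F=p^{-1}(F)=\overline{\wt C}$, so $X_F\subseteq M$. For the reverse inclusion I would use that $H$ acts trivially on $M$ while $T/H\cong T^i$ acts on the connected manifold $M$ with principal orbit type open, dense, and connected; the principal stratum consists exactly of the points of $M$ whose $T$-stabilizer equals $H$, that is, of $M\cap(X_i\setminus X_{i-1})$. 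This stratum is an open connected subset of $X_i\setminus X_{i-1}$ containing $\wt C$, hence it coincides with the whole component $\wt C$; taking closures and using density of the principal orbit type in $M$ gives $M=\overline{\wt C}=X_F$.

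The main obstacle I anticipate is exactly this reverse inclusion $M\subseteq X_F$: it is easy to see that $X_F$ sits inside a single component of $X^H$, but ruling out that $X_F$ is only a proper closed piece of $M$ requires the density and connectedness of the principal orbit type together with the slice-theorem description of how stabilizers jump at the boundary points of $\wt C$, where the orbit dimension drops below $i$. One must also check that $H$ is genuinely the generic stabilizer on all of $M$, so that the principal stratum is $M\cap(X_i\setminus X_{i-1})$ and not something smaller; this is where connectedness of stabilizers (or passing to continuous components with $\Qo$-coefficients) is used. Once these points are settled, smoothness and closedness of $X_F$ come for free from Construction~\ref{conInvarSubmfds}.
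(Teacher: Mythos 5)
The paper itself contains no proof of this lemma: it is quoted verbatim from~\cite{AyzCherep}, so there is no in-text argument to compare yours against, and your proposal must be judged on its own merits. On those merits it is correct, and it is the natural argument: local constancy of the continuous part of the stabilizer on $X_i\setminus X_{i-1}$ via the slice theorem, the identification $X_F=p^{-1}(\overline{C})=\overline{\wt{C}}$ (which follows from openness, or closedness plus invariance, of the orbit map), and the principal orbit theorem applied to the $T$-action on the component $M$ of $X^H$ to force the reverse inclusion $M\subseteq X_F$; smoothness, closedness and $T$-invariance then come from Construction~\ref{conInvarSubmfds}.

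One caveat, at exactly the spot you flagged. The appeal to ``$\Qo$-coefficients'' is misplaced: the lemma is purely geometric and coefficient conventions play no role in it. When stabilizers are disconnected, the correct repair is the other half of your parenthetical: take $H$ to be the common \emph{identity component} of the stabilizers along $\wt{C}$ (only this is constant; the finite parts may genuinely vary inside $\wt{C}$). With this $H$, your assertion that the principal stratum of $M$ equals $M\cap(X_i\setminus X_{i-1})$ can fail, since the latter may also contain points whose stabilizer is $H$ extended by a nontrivial finite group. But the argument survives without it: the principal stratum $M_{\mathrm{pr}}$ is open, dense and connected in $M$, it lies in $X_i\setminus X_{i-1}$, and it meets the nonempty open set $\wt{C}\subseteq M$; since $\wt{C}$ is a connected component of $X_i\setminus X_{i-1}$, maximality of components gives $M_{\mathrm{pr}}\subseteq\wt{C}$, and density then yields $M=\overline{M_{\mathrm{pr}}}\cap M\subseteq\overline{\wt{C}}=X_F\subseteq M$. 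With that adjustment your proof is complete in full generality.
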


\begin{defin}\label{definFaceSubmfd}
Let $F$ be a face of $Q=X/T$. The submanifold $X_F=p^{-1}(F)\subset X$ is called a \emph{face submanifold} corresponding to $F$.
\end{defin}

Notice that the definition of a face implies that each face submanifold necessarily has a $T$-fixed point. Some details and formalism about the notions of faces and face submanifolds can be found in the recent preprint~\cite{AyzCherep}.

\begin{con}\label{conPosetOfFaces}
It is known~\cite[Thm.5.11]{Dieck} that a smooth action of $T^k$ on a compact smooth manifold has only finite number of possible stabilizers. This implies that there exist only finite number of faces. The set of faces of $Q$ (equiv. face submanifolds of $X$) is partially ordered by inclusion and graded by ranks. We denote this poset by $S(X)$.
\end{con}

\begin{con}\label{conCommonStabilizer}
Let $F$ be a face and $X_F$ be the corresponding face submanifold. The action of $T$ on $X_F$ has noneffective kernel
\[
T_F=\{t\in T\mid tx=x\,\,\forall x\in X_F\},
\]
which we call a common stabilizer of points from $F$. The number $\dim T/T_F$ equals the rank of $F$.

The effective action of $T/T_F$ on $X_F$ satisfies the general assumption from Definition~\ref{definComplexity}: its fixed point set is nonempty (due to the definition of a face) and finite (because it is a subset of $X^T$, and the latter is assumed finite). Therefore, the induced complexity $\com X_F$ is well defined:
\[
\com X_F = \dim X_F - \dim(T/T_F) = \dim X_F - \rk F.
\]
\end{con}

Notice that each face $F$ of rank $r$ is a subset of the filtration term $Q_r$. It is tempting to say that $Q_r$ is the union of all faces of rank $r$. However, this may be false in general, the example when $Q_r$ is not the union of $r$-dimensional faces can be found in~\cite[Fig.1]{AyzCherep}. Nevertheless, there is no such problem for equivariantly formal actions as explained in Remark~\ref{remEqFormalSkeleta} below.

\section{Acyclicity for independent actions}\label{secAcyclicity}

\subsection{Equivariant formality}
The definitions and statements of this subsection are well known and given for convenience of the reader.

For $T\cong T^k$ we have the universal principal $T$-bundle $ET\to BT$, $BT\simeq (\CP^\infty)^k$. The $T$-action on $X$ determines the Borel construction $X_T=X\times_T ET$, the Serre fibration $p\colon X_T\stackrel{X}{\to} BT$, and the equivariant cohomology ring $H^*_T(X;R)=H^*(X_T;R)$ which is a module over the polynomial ring $H^*(BT;R)\cong R[k]=R[v_1,\ldots,v_k]$ (the module structure is induced by $p^*$). The fibration $p$ induces the Serre spectral sequence:
\begin{equation}\label{eqSerreSS}
E_2^{p,q}\cong H^p(BT^k;R)\otimes H^q(X;R)\Rightarrow H^{p+q}_T(X;R).
\end{equation}

\begin{defin}\label{definEquivFormality}
The $T$-action on $X$ is called \emph{cohomologically equivariantly formal (over $R$)} in the sense of Goresky--Kottwitz--MacPherson, if the spectral sequence~\eqref{eqSerreSS} collapses at $E_2$.
\end{defin}

We simply call such actions and spaces \emph{equivariantly formal}. The definition of equivariant formality was given by Goresky--Kottwitz--MacPherson in~\cite{GKM} in case of coefficients in~$\Ro$.

%

For convenience, we provide a lemma proved in~\cite{FP} which gives equivalent reformulation of equivariant formality.

\begin{lem}[\cite{FP}]\label{lemEquivFormalCriterion}
The following conditions are equivalent (all coefficients are either in $\Zo$ or a field):
\begin{enumerate}
 \item A $T$-action on $X$ is equivariantly formal.
 \item The inclusion of fiber into the Borel construction $\iota\colon X\to X_T$ induces the surjective map
 $\iota^*\colon H^*_T(X)\to H^*(X)$.
 \item The homomorphism $H^*_T(X)\otimes_{H^*(BT)}R\to H^*(X)$, induced by $\iota^*$, is an isomorphism.
 \item $\Tor^j_{H^*(BT)}(H^*_T(X);R)=0$ for all $j>0$.
 \item $\Tor^1_{H^*(BT)}(H^*_T(X);R)=0$.
\end{enumerate}
If the coefficients are in a field, then these conditions are equivalent to freeness of the $H^*(BT)$-module $H^*_T(X)$.
\end{lem}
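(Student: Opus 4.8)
The plan is to run through the implications with two tools: the Serre spectral sequence~\eqref{eqSerreSS} of the fibration $p\colon X_T\stackrel{X}{\to} BT$, and the graded polynomial structure of $H^*(BT)\cong R[v_1,\ldots,v_k]$. Throughout I will use that $BT$ is simply connected (so the coefficient system is trivial and, $H^*(BT)$ being a free $R$-module, the Künneth identification $E_2^{p,q}=H^p(BT)\otimes H^q(X)$ carries no $\Tor$ terms), and that the bottom row $E_2^{*,0}=H^*(BT)$ consists of permanent cycles, being pulled back from the base.

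First I would establish (1)$\Leftrightarrow$(2). The map $\iota^*$ is the edge homomorphism $H^*_T(X)\twoheadrightarrow E_\infty^{0,*}\hookrightarrow E_2^{0,*}=H^*(X)$, so $\iota^*$ is surjective precisely when $E_\infty^{0,q}=E_2^{0,q}$ for every $q$, i.e.\ when no differential leaves the fiber column $p=0$. Collapse at $E_2$ forces this, giving (1)$\Rightarrow$(2). The converse is the point where the multiplicative structure does the real work: $E_2$ is generated as an algebra by $E_2^{*,0}$ and $E_2^{0,*}$, the former are permanent cycles, and surjectivity of $\iota^*$ makes the latter permanent cycles as well, so the Leibniz rule forces every $d_r$ to vanish on all of $E_r$. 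This is the standard Leray--Hirsch argument.

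Next I would treat (2)$\Leftrightarrow$(3). Since the composite $X\stackrel{\iota}{\to}X_T\stackrel{p}{\to}BT$ is null-homotopic, $\iota^*$ annihilates the ideal $H^{>0}(BT)\cdot H^*_T(X)$ and hence factors through the map $\bar\iota\colon H^*_T(X)\otimes_{H^*(BT)}R\to H^*(X)$ of condition (3); as the quotient projection is onto, surjectivity of $\iota^*$ and of $\bar\iota$ coincide, so (3)$\Rightarrow$(2) is immediate and (2) yields surjectivity of $\bar\iota$. For injectivity I would invoke collapse: by (1) the filtration on $H^*_T(X)$ has associated graded $E_\infty=E_2=H^*(BT)\otimes H^*(X)$ as an $H^*(BT)$-module, and applying $-\otimes_{H^*(BT)}R$ kills $H^{>0}(BT)$ and returns exactly $H^*(X)$, identifying source and target of $\bar\iota$ degreewise.

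Finally, the $\Tor$ conditions. The implications (4)$\Rightarrow$(5), and (over a field) freeness$\Rightarrow$(4)$\Rightarrow$(3), are formal, since free modules are $\Tor$-acyclic and tensoring a free module down by $R$ reproduces $H^*(X)$ by the collapse. The substantive step, which I expect to be the main obstacle, is (5)$\Rightarrow$freeness over a field: here one uses that $H^*(BT)$ is a graded polynomial algebra, so the finitely generated graded module $M=H^*_T(X)$ admits a minimal free resolution in which $\dim_R\Tor^i_{H^*(BT)}(M,R)$ counts the generators in homological degree $i$; vanishing of $\Tor^1$ means there are no first syzygies, so the resolution stops and $M$ is free, whence all higher $\Tor$ vanish. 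The integral case of (5)$\Rightarrow$(4) is more delicate, as minimal resolutions are unavailable over $\Zo$; there I would instead run the graded Nakayama argument using the regular sequence $v_1,\ldots,v_k$ to reduce to the field case degreewise, which is the technically awkward part of the whole statement.
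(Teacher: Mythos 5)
The paper itself offers no proof of this lemma --- it is quoted from Franz--Puppe \cite{FP} --- so your attempt can only be measured against the standard argument there. Your spectral-sequence block is essentially right: $(1)\Leftrightarrow(2)$ by the edge-homomorphism/Leray--Hirsch argument (correctly using that $H^*(BT;\Zo)$ is free, so $E_2=H^*(BT)\otimes H^*(X)$ with no Tor correction), and $(1)\Rightarrow(3)\Rightarrow(2)$ by the filtration argument; the step ``applying $-\otimes_{H^*(BT)}R$ to the associated graded returns $H^*(X)$'' is compressed --- one needs the multiplicativity $E_\infty^{p,q}=E_\infty^{p,0}\cdot E_\infty^{0,q}$ and a descending induction on the filtration to see that $F^1H^*_T(X)=H^{>0}(BT)\cdot H^*_T(X)$ --- but this is routine.

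The genuine gap is that you never connect the Tor/freeness conditions back to $(1)$--$(3)$ without circularity. Your justification of ``$(4)\Rightarrow(3)$'' is that tensoring down by $R$ ``reproduces $H^*(X)$ \emph{by the collapse}'': this assumes condition $(1)$, which is precisely what is unavailable when you start from $(4)$. What your proposal actually yields are two internally equivalent clusters --- $\{(1),(2),(3)\}$ on one side, and (over a field) $\{\text{freeness},(4),(5)\}$ on the other, together with $(1)\Rightarrow$ freeness --- with no implication from the second cluster into the first. The missing tool is the Eilenberg--Moore spectral sequence of the pullback of $X_T\to BT$ along $\pt\to BT$, namely $\Tor_{H^*(BT)}\bigl(H^*_T(X);R\bigr)\Rightarrow H^*(X)$, which converges because $BT$ is simply connected and whose edge homomorphism is exactly the map of condition $(3)$; given $(4)$ it is concentrated in the zeroth column and yields $(3)$ at once. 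This is how the cited proof bridges the two sides, and I see no elementary substitute in your outline. Two integral points are also understated: over $\Zo$ equivariant formality does \emph{not} imply freeness (the paper stresses this, citing \cite{FPOsaka}), so $(1)\Rightarrow(4)$ cannot go through Tor-acyclicity of free modules; instead one uses that under collapse $\mathrm{gr}\,H^*_T(X)\cong H^*(BT)\otimes_{\Zo}H^*(X)$ is an \emph{extended} module $H^*(BT)\otimes_{\Zo}N$, which has vanishing higher Tor even when $N$ has torsion, plus the finite-filtration spectral sequence for Tor. Likewise the integral $(5)\Rightarrow(4)$ is a rigidity statement for the Koszul complex of bounded-below graded modules (a top Koszul cycle killed by all $v_i$ becomes, after repeatedly invoking $\Tor^1=0$, divisible by arbitrarily high powers of the $v_i$, hence zero), not a ``degreewise reduction to the field case'': graded Nakayama over $\Zo$ cannot produce a minimal free cover, because $H^*_T(X)\otimes_{H^*(BT)}\Zo$ need not be a free abelian group.
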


Over $\Zo$, the freeness of $H^*(BT)$-module is strictly stronger than equivariant formality (see~\cite{FPOsaka}). 

In the following we only consider the actions with isolated fixed points. In this case, the characterization of equivariant formality becomes easier.

\begin{lem}[{\cite[Lm.2.1]{MasPan}}]\label{lemEquivFormFixedPoints}
Consider a smooth $T$-action on $X$, such that $X^T$ is finite and nonempty. Then the following conditions are equivalent
\begin{enumerate}
 \item The $T$-action on $X$ is cohomologically equivariantly formal.
 \item $H^{\odd}(X)=0$.
 \item $H^*_T(X)$ is a free $H^*(BT)$-module.
\end{enumerate}
If either of these conditions hold, we have an isomorphism of graded $H^*(BT)$-modules $H^*_T(X)\cong H^*(BT)\otimes H^*(X)$.
\end{lem}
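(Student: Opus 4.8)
The plan is to prove the equivalence of the three conditions in a cycle, and then deduce the module isomorphism from the equivariant formality together with the spectral sequence degeneration.

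\medskip

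First I would establish that $(1)\Leftrightarrow(2)$. The direction $(1)\Rightarrow(2)$ is straightforward: if the action is equivariantly formal, then by Lemma~\ref{lemEquivFormalCriterion} the restriction $\iota^*\colon H^*_T(X)\to H^*(X)$ is surjective, and since $H^*_T(X)$ is concentrated in even degrees whenever $X^T$ is finite --- this uses the localization theorem, which tells us that $\iota^*_{X^T}\colon H^*_T(X)\to H^*_T(X^T)\cong H^*(BT)^{|X^T|}$ is injective after inverting finitely many elements, hence $H^*_T(X)$ injects into a module concentrated in even degrees --- surjectivity of $\iota^*$ forces $H^{\odd}(X)=0$. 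For the converse $(2)\Rightarrow(1)$, assuming $H^{\odd}(X)=0$, I would examine the Serre spectral sequence~\eqref{eqSerreSS}. Its $E_2$-page is $H^p(BT;R)\otimes H^q(X;R)$, and since $H^*(BT;R)$ is concentrated in even degrees and $H^{\odd}(X)=0$, the entire $E_2$-page is concentrated in bidegrees $(p,q)$ with both $p$ and $q$ even. Every differential $d_r$ changes total parity or maps between a nonzero and a zero group, so all differentials vanish; hence the spectral sequence collapses at $E_2$, which is precisely equivariant formality.

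\medskip

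Next I would prove $(1)\Leftrightarrow(3)$. The implication $(3)\Rightarrow(1)$ is immediate from Lemma~\ref{lemEquivFormalCriterion}, since freeness of $H^*_T(X)$ as an $H^*(BT)$-module implies condition~(4) of that lemma (all higher $\Tor$ vanish), which is equivalent to equivariant formality. For $(1)\Rightarrow(3)$, the collapse of the spectral sequence at $E_2$ gives that the associated graded of $H^*_T(X)$ with respect to the filtration by $p$ is isomorphic to $E_2=H^*(BT;R)\otimes H^*(X;R)$, which is a free $H^*(BT;R)$-module because $H^*(X;R)$ is a finitely generated free (i.e.\ finite-dimensional, hence free over the field or over $\Zo$ under the standing torsion-free hypothesis) module concentrated in even degrees. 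I would then argue that freeness of the associated graded lifts to freeness of $H^*_T(X)$ itself: choosing homogeneous elements of $H^*_T(X)$ that restrict to an $H^*(BT)$-basis of the associated graded, a standard filtered-module argument shows they form a basis of $H^*_T(X)$.

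\medskip

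Finally, the concluding module isomorphism $H^*_T(X)\cong H^*(BT)\otimes H^*(X)$ follows by the same mechanism: once the spectral sequence collapses, $E_\infty=E_2$, and the filtration on $H^*_T(X)$ has associated graded equal to $H^*(BT)\otimes H^*(X)$; because $H^*(BT)\otimes H^*(X)$ is free over $H^*(BT)$, the extension problem is trivial and the isomorphism of graded $H^*(BT)$-modules follows. The main obstacle I expect is the parity argument in $(2)\Rightarrow(1)$ needing the localization theorem to guarantee that $H^*_T(X)$ sits in even degrees, and the lifting-of-freeness step in $(1)\Rightarrow(3)$, where one must be careful that the collapse yields not just an additive but an $H^*(BT)$-module splitting; over $\Zo$ one must also invoke the torsion-freeness convention adopted in the paper to ensure $H^*(X;\Zo)$ is free.
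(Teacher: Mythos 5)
The paper itself gives no proof of this lemma --- it is quoted directly from [MasPan, Lm.~2.1] --- so your attempt can only be judged on its own terms; your overall outline (parity collapse for (2)$\Rightarrow$(1), Tor-vanishing for (3)$\Rightarrow$(1), localization for (1)$\Rightarrow$(2), and a Leray--Hirsch-type lifting for (1)$\Rightarrow$(3) and the final module isomorphism) is indeed the standard route. However, your (1)$\Rightarrow$(2) step has a genuine gap. The localization theorem gives injectivity of the restriction $H^*_T(X)\to H^*_T(X^T)$ only \emph{after} inverting a multiplicative set $S\subset H^*(BT)$, and this does not imply that $H^*_T(X)$ embeds into a module concentrated in even degrees: the kernel of the natural map $H^*_T(X)\to S^{-1}H^*_T(X)$ is precisely the $S$-torsion submodule, so a priori all of $H^{\odd}_T(X)$ could be torsion and simply die under localization. (This is not a vacuous worry: for a non-formal action $H^{\odd}_T(X)$ is typically a nonzero torsion module, while the localization theorem holds regardless.) To justify the step you must first know that $H^*_T(X)$ is torsion-free over $H^*(BT)$, i.e.\ you need condition (3) --- or, over a field, the last sentence of Lemma~\ref{lemEquivFormalCriterion} --- \emph{before} deducing (2). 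Since you do prove (1)$\Rightarrow$(3) separately, the proof is repairable by reordering the implications as (1)$\Rightarrow$(3)$\Rightarrow$(2), but as written (1)$\Rightarrow$(2) is unsupported; relatedly, your closing paragraph attributes the localization argument to (2)$\Rightarrow$(1), where it is not used at all.

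Two smaller points. In your (1)$\Rightarrow$(3) argument you assert that $H^*(X;R)$ is ``concentrated in even degrees'': that is exactly condition (2), which is not yet available at that stage. Over a field this assertion is fortunately irrelevant, since only $R$-freeness of $H^*(X;R)$ is needed for the filtered-lifting argument. Over $\Zo$, however, the lifting argument genuinely needs $H^*(X;\Zo)$ to be a free abelian group, and this is not a ``torsion-freeness convention'' of the paper (the paper's convention concerns connected stabilizers); rather, freeness of $H^*(X;\Zo)$ follows from $H^{\odd}(X)=0$ combined with Poincar\'{e} duality and universal coefficients, using that $X$ is closed and orientable --- so it only becomes available once (2) is known, and over $\Zo$ the order of implications must be arranged accordingly. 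This is precisely the delicacy which makes freeness and equivariant formality agree over $\Zo$ only in the isolated-fixed-point setting, in contrast with the general warning after Lemma~\ref{lemEquivFormalCriterion}.
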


Another important statement asserts that equivariant formality is inherited by invariant submanifolds.

\begin{lem}[{\cite[Lem.2.2]{MasPan}}]\label{lemInvarIsFormal}
Let $T$ act on $X$, and $Y$ be an invariant submanifold (a connected component of $X^H$ for some $H\in \Sgr(T)$). Then condition $H^{\odd}(X)=0$ implies $H^{\odd}(Y)=0$ and $Y^T\neq\varnothing$.
\end{lem}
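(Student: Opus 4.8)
The plan is to reduce both conclusions to a single inequality between total Betti numbers, and then to establish that inequality by filtering $H$ through a flag of circle subgroups.

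First I would dispose of the two stated conclusions assuming $H^{\odd}(X^H;\Qo)=0$ for the \emph{whole} fixed set $X^H$. Since $X^H=\bigsqcup_j Y_j$ is the disjoint union of its connected components and $H^{\odd}(X^H)=\bigoplus_j H^{\odd}(Y_j)$, vanishing on the left forces $H^{\odd}(Y)=0$ on each component, which is the first assertion. For the second, a nonempty connected $Y$ with $H^{\odd}(Y)=0$ satisfies $\chi(Y)=\dim_\Qo H^*(Y)\geq \dim_\Qo H^0(Y)=1>0$; since the Euler characteristic is preserved under passage to the fixed set of a torus action, $\chi(Y^T)=\chi(Y)\neq 0$, whence $Y^T\neq\varnothing$. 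Thus everything reduces to proving $H^{\odd}(X^H;\Qo)=0$.

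Next I would use that in the standing situation $X^T$ is finite and $(X^H)^T=X^T$ (as $H\subseteq T$). Hence $X^H$ is a smooth closed $T$-submanifold with finite nonempty fixed set $X^T$, and Lemma~\ref{lemEquivFormFixedPoints} applied to $X^H$ shows that $H^{\odd}(X^H)=0$ is equivalent to the numerical equality $\dim_\Qo H^*(X^H)=\dim_\Qo H^*((X^H)^T)=|X^T|$. The lower bound $\dim_\Qo H^*(X^H)\geq |X^T|$ is the standard localization bound, while equivariant formality of $X$ together with Lemma~\ref{lemEquivFormFixedPoints} gives $\dim_\Qo H^*(X)=|X^T|$. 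Therefore the whole problem collapses to the single inequality $\dim_\Qo H^*(X^H;\Qo)\leq \dim_\Qo H^*(X;\Qo)$.

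I would prove this first for a subtorus $K=H$. Choose a flag $0=K_0\subset K_1\subset\cdots\subset K_m=K$ with each quotient $K_i/K_{i-1}\cong S^1$, so that $X^{K_i}$ is the fixed set of the circle $K_i/K_{i-1}$ acting on $X^{K_{i-1}}$. For a single circle $S^1$ acting on a compact manifold $Z$ with $H^{\odd}(Z;\Qo)=0$ I would run the squeeze
\[
\dim_\Qo H^*(Z)=\chi(Z)=\chi(Z^{S^1})=\dim_\Qo H^{\mathrm{ev}}(Z^{S^1})-\dim_\Qo H^{\odd}(Z^{S^1})\leq \dim_\Qo H^*(Z^{S^1})\leq \dim_\Qo H^*(Z),
\]
where the first equality is even concentration, the second is invariance of $\chi$ under passage to the circle-fixed set, and the last is the Borel total Betti inequality for the circle. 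The chain forces $H^{\odd}(Z^{S^1})=0$ together with $\dim_\Qo H^*(Z^{S^1})=\dim_\Qo H^*(Z)$. Feeding this through the flag, starting from $Z=X$, yields both $H^{\odd}(X^K)=0$ and the required inequality for subtori.

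Finally I would treat a general $H$ with identity component $K$ and finite part $F=H/K$, writing $X^H=(X^K)^F$ with $F$ a finite subgroup of the residual torus $T/K$ acting on the equivariantly formal manifold $X^K$. By the previous step $X^K$ has $H^{\odd}(X^K;\Qo)=0$, and it remains to prove $\dim_\Qo H^*((X^K)^F)\leq \dim_\Qo H^*(X^K)$; the same squeeze as above then finishes the argument. This finite-part bound I would obtain from the localization theorem at the subgroup $F$ inside $T/K$, combined with equivariant formality of $X^K$. \textbf{The main obstacle is precisely this finite-part step.} Over $\Qo$ the usual mod-$p$ Smith theory is unavailable, so the inequality cannot be read off from $\mathbb{F}_p$-cohomology without control of the integral torsion of $X^K$; one must genuinely exploit that $F$ sits inside the acting torus $T/K$, which is exactly what rules out pathologies such as an orientation-reversing involution whose fixed set carries extra odd cohomology.
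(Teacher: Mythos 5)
Your reduction of both conclusions to the single vanishing statement $H^{\odd}(X^H;\Qo)=0$, and your treatment of the case when $H$ is connected, are correct and in fact coincide with the proof behind the paper's citation: the paper offers no argument of its own but refers to \cite{MasPan}, and the proof there is exactly your squeeze $\dim H^*(Z)=\chi(Z)=\chi(Z^{S^1})\leqslant \dim H^*(Z^{S^1})\leqslant \dim H^*(Z)$ for a circle, iterated along a flag of subtori, followed by the Euler characteristic argument $\chi(Y^T)=\chi(Y)>0$ for the existence of fixed points. (A minor remark: your detour through $\dim H^*(X)=|X^T|$ and Lemma~\ref{lemEquivFormFixedPoints} needs $X^T$ to be finite; this holds under the paper's standing assumptions, but the direct squeeze shows the lemma does not require it.)

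The genuine gap is the step you flag yourself: subgroups $H\in\Sgr(T)$ with nontrivial finite part $F=H/K$. This case cannot be dismissed in the present setting: already for the $1$-independent, equivariantly formal action of $S^1$ on $\CP^2$ with weights $0,1,2$ there are points with stabilizer $\Zo_2$, and $(\CP^2)^{\Zo_2}=\CP^1\sqcup\pt$ is strictly larger than $(\CP^2)^{S^1}$ (three points); disconnected stabilizers are exactly why the paper takes coefficients in $\Qo$. (In the complexity-zero world of \cite{MasPan} the issue ultimately disappears, since there $H^{\odd}=0$ forces the action to be locally standard, all stabilizers are then connected, and $X^H=X^{\hat{H}}$ for the subtorus $\hat{H}$ generated by $H$.) Moreover, the tool you propose for this step does not exist: rational Borel localization is structurally blind to finite isotropy, because $H^*(BF;\Qo)=\Qo$ for every finite group $F$. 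The support of $H^*_{T/K}(X^K;\Qo)$ over the polynomial ring $H^*(B(T/K);\Qo)$ is a union of linear subspaces recording only identity components of stabilizers; there is no prime ``at $F$'' to localize at, and the localization isomorphism identifies localized equivariant cohomology of $Z$ with that of $Z^{K'}$ for subtori $K'$ only --- in the $\CP^2$ example it sees the three fixed points and never the $\Zo_2$-fixed $\CP^1$. What is actually missing is the inequality $\dim_\Qo H^*(Z^g;\Qo)\leqslant\dim_\Qo H^*(Z;\Qo)$ for a finite-order element $g$ of an acting torus (the equality $\chi(Z^g)=\chi(Z)$ is unproblematic, by Lefschetz, since $g$ is homotopic to the identity inside the torus). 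Proving it requires an invariant finer than rational Borel cohomology: for instance Segal's localization theorem in equivariant $K$-theory, where $\mathrm{Spec}\,R(T)$ does contain points corresponding to finite-order elements, combined with the Chern character; or mod-$p$ Smith theory together with control of $p$-torsion, which your hypotheses do not supply. As written, your argument establishes the lemma only for connected $H$.
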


\begin{rem}\label{remEqFormalSkeleta}
Lemma~\ref{lemInvarIsFormal} implies that the requirement for a face submanifold (and for a face) to contain a fixed point is automatically satisfied for equivariantly formal actions with isolated fixed points. This, in turn, implies that the equivariant $r$-skeleton $Q_r$ is exactly the union of all faces $F$ of rank $r$, when we deal with equivariantly formal actions.
\end{rem}

\subsection{$j$-independent actions}

\begin{defin}\label{definJgeneral}
A $T$-action on a manifold $X$ is \emph{$j$-independent}\footnote{In~\cite{AyzMasEquiv} and~\cite{AyzCher} we also called such actions the actions in $j$-general position.} if, for any fixed point $x\in X^T$, any $\leqslant j$ of the tangent weights $\alpha_{x,1},\ldots,\alpha_{x,n}\in \Hom(T,S^1)\cong \Zo^k$ are linearly independent over $\Qo$.
\end{defin}
%

\begin{rem}\label{remCom0MaxGenerality}
If $\com X=0$, that is $T=T^n$ acts on $X=X^{2n}$, then, at each fixed point $x\in X^T$ we have $n$ weights $\alpha_{x,1},\ldots,\alpha_{x,n}\in \Hom(T,S^1)\cong \Zo^n$ which linearly span $\Qo^n$. Hence these actions are $n$-independent. It is also true that this action is $\infty$-independent since any subset of the set of tangent weights is linearly independent. We only work with finite-dimensional manifolds, so this notation is just a matter of formalism; however, some statements about $j$-independent actions make perfect sense for $j=\infty$.
\end{rem}

Several technical statements about $j$-independent actions were proved in~\cite{AyzMasEquiv}.

\begin{lem}[{\cite[Lm.3.1]{AyzMasEquiv}}]\label{lemTechJgeneral}
Consider a $j$-independent action of $T$ on $X$, $j\geqslant 1$. Let $X_F$ be a face submanifold. Then
\begin{enumerate}
 \item $\com X_F\leqslant \com X$;
 \item If $\rk F<j$, then $\com X_F=0$;
 \item If $\rk F\geqslant j$, then the action of $T/T_F$ on $X_F$ is $j$-independent.
\end{enumerate}
\end{lem}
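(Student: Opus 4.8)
The plan is to analyze the tangent representation of $T/T_F$ at a fixed point of $X_F$ directly in terms of the original tangent weights, so that all three claims become statements about linear independence of subsets of the weights $\alpha_{x,1},\ldots,\alpha_{x,n}$. First I would fix a $T$-fixed point $x\in X_F$ (which exists by Remark~\ref{remEqFormalSkeleta}, or intrinsically by the definition of a face). The tangent space splits as $\tau_xX=\tau_xX_F\oplus\nu_x$, where $\nu_x$ is the normal space to the submanifold $X_F=p^{-1}(F)$. Since $X_F$ is a connected component of $X^{T_F}$ for the common stabilizer $T_F$ (this is how face submanifolds arise, via Construction~\ref{conInvarSubmfds} and Lemma~\ref{lemFaceSubmanifold}), the tangent weights of $X$ at $x$ partition into those weights $\alpha_{x,i}$ that vanish on $T_F$ --- these are exactly the weights of $\tau_xX_F$, i.e. the tangent weights of the $T/T_F$-action on $X_F$ --- and those that do not vanish on $T_F$, which are the normal weights. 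Writing $r=\rk F=\dim(T/T_F)$, the first key observation is that the weights of $\tau_xX_F$ lie in the sublattice $\Hom(T/T_F,S^1)\cong\Zo^{\,r}$ (up to torsion, working over $\Qo$), and they span it, so there are at least $r$ of them.

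For claim (2), suppose $\rk F=r<j$. The complexity $\com X_F=\dim X_F-r$ is nonnegative, and $\frac12\dim X_F$ equals the number of tangent weights of $X_F$ at $x$, i.e. the number of original weights $\alpha_{x,i}$ vanishing on $T_F$. The crucial point is that $j$-independence forces this number to equal exactly $r$: if there were more than $r$ such weights, we would have a set of at least $r+1\leqslant j$ weights all lying in the rank-$r$ sublattice $\Hom(T/T_F,S^1)\otimes\Qo$, hence linearly dependent over $\Qo$, contradicting $j$-independence. Therefore $\frac12\dim X_F=r$, giving $\com X_F=0$. Claim (1) follows along the same lines but without the bound $r<j$: the number of weights of $X_F$ at $x$ is at most $n=\frac12\dim X$ and at least $r$, so $\com X_F=\bigl(\tfrac12\dim X_F\bigr)-r\leqslant n-r\leqslant n-\dim T=\com X$, using $r=\dim(T/T_F)\leqslant\dim T$ together with $\dim X=2n$.

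For claim (3), assume $\rk F=r\geqslant j$. I want to show that any $\leqslant j$ of the tangent weights of $T/T_F$ on $X_F$ at any fixed point $x$ are linearly independent over $\Qo$. These weights are a subset $\{\alpha_{x,i}\mid \alpha_{x,i}|_{T_F}=1\}$ of the original tangent weights, now regarded as elements of $\Hom(T/T_F,S^1)\otimes\Qo$, which is the quotient of $\Hom(T,S^1)\otimes\Qo$ by the subspace annihilating $T/T_F$. The essential point is that restricting to weights that already vanish on $T_F$ and then passing to the quotient $\Hom(T/T_F,S^1)\otimes\Qo$ is injective on the span of these particular weights: a weight vanishing on $T_F$ is determined by its image in the quotient. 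Hence linear relations among $\leqslant j$ of the $T/T_F$-weights lift to linear relations among the same weights viewed in $\Hom(T,S^1)\otimes\Qo$, and $j$-independence of the original action rules these out. The main obstacle, and the step requiring the most care, is precisely this identification of the $T/T_F$-tangent weights with a subset of the $T$-tangent weights and the verification that passage to the quotient lattice does not introduce spurious dependencies; once the splitting $\tau_xX=\tau_xX_F\oplus\nu_x$ and the vanishing-on-$T_F$ characterization of the tangential weights are established, the remaining arguments are elementary linear algebra over $\Qo$.
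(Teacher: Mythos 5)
Your treatment of claims (2) and (3) is sound: identifying the tangent weights of the $T/T_F$-action on $X_F$ at a $T$-fixed point $x$ with the subset of the original weights $\alpha_{x,i}$ that restrict trivially to $T_F$, observing that these lie in the $r$-dimensional subspace $\Hom(T/T_F,S^1)\otimes\Qo$ (where $r=\rk F$), and then invoking $j$-independence of the ambient action is exactly the right mechanism. One wording issue: $\Hom(T/T_F,S^1)$ is naturally a \emph{subgroup} of $\Hom(T,S^1)$ (the characters trivial on $T_F$), not a quotient of it; the identification you actually use is the correct injective one, so nothing breaks, but the ``passing to the quotient'' phrasing should be removed. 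Also, the spanning assertion behind ``at least $r$ of them'' deserves its one-line justification: it follows from effectiveness of the $T/T_F$-action on $X_F$, which holds because $T_F$ is by definition the kernel of the $T$-action on $X_F$.

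Claim (1), however, contains a genuine error. You derive $\com X_F=\tfrac12\dim X_F-r\leqslant n-r$ and then assert $n-r\leqslant n-\dim T$ ``using $r\leqslant \dim T$''. That inequality is backwards: $r=\dim(T/T_F)\leqslant\dim T$ gives $n-r\geqslant n-\dim T=\com X$, so your chain only proves the weaker bound $\com X_F\leqslant n-r$, which falls short of the claim whenever $\dim T_F>0$. The repair stays inside your framework but requires counting the \emph{normal} weights rather than only bounding the tangential ones: all $n$ weights at $x$ span $\Hom(T,S^1)\otimes\Qo\cong\Qo^{k}$ (effectiveness of the $T$-action on $X$, as recalled in Section~\ref{subsecSmoothActions}), whereas the tangential weights lie in the $r$-dimensional subspace $\Hom(T/T_F,S^1)\otimes\Qo$; hence the normal weights must supply at least $k-r=\dim T_F$ further dimensions, so there are at least $k-r$ of them. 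This gives $\tfrac12\dim X_F\leqslant n-(k-r)$, and therefore $\com X_F=\tfrac12\dim X_F-r\leqslant n-k=\com X$. For context, the paper does not reprove this lemma but cites \cite[Lm.3.1]{AyzMasEquiv}, so your weight-counting approach is the natural one to take; only step (1) needs the above correction.
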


\begin{con}
Let $F$ be a face of $Q$ for an action of $T$ on $X$. For simplicity, we denote by $F_{-1}$ the union of lower-rank subfaces of $F$. In the case of equivariantly formal actions the set $F_{-1}$ can be equivalently defined by
\[
F_{-1}=\{x\in F\mid \dim T_x>\rk F\}.
\]
Indeed, every $r$-dimensional orbit is contained in some invariant submanifold (hence a face submanifold in the equivariantly formal case) of rank $r$.

Similarly, we define $(X_F)_{-1}$ as the union of lower-rank face submanifolds. In equivariantly formal case this subset coincides with $\{x\in X_F\mid \dim T_x>\rk F\}$.
\end{con}

In the following propositions we assume that the a coefficient ring $R$ is $\Zo$ if all stabilizers are connected, or $\Qo$ otherwise.

\begin{prop}[{\cite{MasPan}}]\label{propMasPan}
Consider an equivariantly formal $T$-action on $X$ of complexity $0$. Then, for each face $F$ (including $Q$ itself), the following statements hold true.
\begin{enumerate}
 \item $\dim F=\rk F$;
 \item $\Hr^*(F;R)=0$;
 \item $H^i(F,F_{-1};R)=0$ for $i\neq \rk F$ and $H^{\rk F}(F,F_{-1};R)\cong R$.
\end{enumerate}
In other words, each face $F$ is a homology cell, and the filtration $\{Q_i\}$ is a homology cell complex.
\end{prop}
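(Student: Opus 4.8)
The plan is to establish (1) by a direct dimension count, and then to obtain (2) and (3) together by induction on $\frac{1}{2}\dim X$, reducing everything to the acyclicity of the single top orbit space $Q$. For (1), observe that a complexity zero action with isolated fixed points is $\infty$-independent (Remark~\ref{remCom0MaxGenerality}), so Lemma~\ref{lemTechJgeneral}(1) applies and gives $\com X_F\leqslant\com X=0$, hence $\com X_F=0$. By the definition of complexity this means $\dim X_F=2\rk F$, and since a generic orbit of the effective $T/T_F$-action on $X_F$ has dimension $\rk F$, we get $\dim F=\dim X_F-\rk F=\rk F$.

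For (2) and (3) I would induct on $n=\frac{1}{2}\dim X$. A proper face $F$ is the orbit space of its face submanifold $X_F$, which by Lemma~\ref{lemInvarIsFormal} again satisfies $H^{\odd}(X_F)=0$, is of complexity zero, and carries the induced face structure whose faces are exactly the subfaces of $F$; since $\dim X_F<\dim X$ the inductive hypothesis yields (2) and (3) for every proper face. It then remains to treat the top face $F=Q$. Here I would first prove the acyclicity $\Hr^*(Q;R)=0$ of (2), and deduce (3) for $Q$ from it: near a point of a rank $r$ face the slice model identifies $Q$ with $\Ro^{r}\times\Ro^{n-r}_{\geqslant 0}$, so $Q$ is a compact topological manifold with boundary $\partial Q=Q_{n-1}$, and Lefschetz duality gives $H^i(Q,Q_{n-1};R)\cong H_{n-i}(Q;R)$; acyclicity of $Q$ makes the right-hand side equal to $R$ for $i=n$ and $0$ otherwise, which is precisely (3).

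The core is thus the acyclicity of $Q$, and this is where the global hypothesis $H^{\odd}(X)=0$ must enter, since the proper faces, all acyclic by induction, do not by themselves determine it. My approach is to read $H^*(Q;R)$ off equivariant cohomology through the Borel construction. The map $\rho\colon X_T\to Q$ has, over the relative interior of a rank $r$ face $F$, the fibre $ET/T_F\simeq BT_F\simeq BT^{n-r}$, and its Leray spectral sequence
\[
E_2^{p,q}=H^p(Q;\mathcal R^q)\Rightarrow H^{p+q}_T(X)
\]
has Leray sheaf $\mathcal R^q=R^q\rho_*R$ with stalks $H^q(BT_F)$, so $\mathcal R^0=\underline R$, the bottom row is $E_2^{p,0}=H^p(Q;R)$, and $\mathcal R^q=0$ for $q$ odd. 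Equivalently, the interior $Q\setminus Q_{n-1}$ is the base of the principal $T$-bundle $X\setminus X_{n-1}\to Q\setminus Q_{n-1}$ and is homotopy equivalent to $Q$, so $H^*(Q;R)\cong H^*_T(X\setminus X_{n-1};R)$. Feeding in $H^{\odd}_T(X)=0$ (Lemma~\ref{lemEquivFormFixedPoints}) together with the inductively acyclic proper faces, which control the stalk variation of $\mathcal R^q$, should force the odd rows and hence $\Hr^*(Q;R)$ to vanish.

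I expect the main obstacle to be exactly this last step: controlling the non-constant higher rows $E_2^{p,q}$ with $q>0$, together with the differentials, well enough to conclude $\Hr^*(Q;R)=0$ from $H^{\odd}_T(X)=0$. This amounts to organizing the sheaves $\mathcal R^q$ along the face filtration of $Q$ and computing $H^*(Q;\mathcal R^q)$ from the inductively established acyclicity of each proper face; it is precisely the computation underlying the Masuda--Panov identification of $H^*_T(X)$ with the face ring of $Q$. A minor technical point to dispatch is the orientability needed for Lefschetz duality, which holds for the orbit space of a torus manifold and may in any case be circumvented by working over $\Zt$ or $\Qo$ using the homology-manifold-with-boundary structure.
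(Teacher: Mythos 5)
The paper does not prove this proposition at all: it is quoted from Masuda--Panov \cite{MasPan}, so your attempt must be measured against that source and against the machinery the paper builds for the general case (Proposition~\ref{propAcyclicityMain}). Within that comparison, your item (1), the induction over proper faces, the identification $H^*(Q;\Qo)\cong H^*_T(X\setminus X_{n-1};\Qo)$, and the Lefschetz-duality derivation of (3) from (2) are all reasonable and indeed close in spirit to the cited proof. But the proposal stops exactly where the theorem lives: the deduction of $\Hr^*(Q;R)=0$ from $H^{\odd}(X)=0$, which you yourself flag as ``the main obstacle,'' is the entire content of the Masuda--Panov result, and ``should force'' is not an argument. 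Concretely, your Leray spectral sequence does not behave the way the sketch suggests: the odd rows vanish for trivial reasons (the stalks $H^{\odd}(BT_y;\Qo)$ are zero), so there is nothing for $H^{\odd}_T(X)=0$ to force there; the problem is the bottom row in even degrees and the even rows $q\geqslant 2$. Since no differential leaves the bottom row, $E_\infty^{2,0}=H^2(Q)$ is a graded piece of $H^2_T(X)$, which is nonzero, so convergence alone cannot kill it --- one already needs a localization argument (injectivity of $H^*_T(X)\to H^*_T(X^T)$ together with the fact that $\rho^*$ followed by restriction to a fixed point factors through a point) just to handle degree $2$; and in degrees $\geqslant 3$ the bottom row must be killed by differentials coming from the groups $H^p(Q;\mathcal{R}^{2k})$, which are supported on the singular strata and whose computation is essentially equivalent to the face-acyclicity you are trying to prove. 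The missing bridge from equivariant formality to orbit-space acyclicity is a genuine theorem, not bookkeeping: in \cite{MasPan} it is an induction over the characteristic submanifolds via exact sequences of pairs, and in the present paper the same role is played by the exactness of the Atiyah--Bredon--Franz--Puppe sequence (\cite{Bredon} over $\Qo$, \cite{FP} over $\Zo$), which is precisely what the proof of Proposition~\ref{propAcyclicityMain} feeds into the orbit-type filtration spectral sequence. Your proof must invoke, or reprove, one of these inputs; as written, the central claim is unproven.

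A secondary gap: the slice model $\Ro^{r}\times\Ro^{n-r}_{\geqslant 0}$, which you use both to identify $\partial Q=Q_{n-1}$ and to apply Lefschetz duality, presupposes that the action is locally standard, equivalently that stabilizers are connected. For a complexity zero action with isolated fixed points this is not automatic; over $\Zo$ it is itself part of what Masuda--Panov deduce from $H^{\odd}(X)=0$. It can be repaired under the paper's standing convention: when $R=\Zo$ all stabilizers are assumed connected, and then faithfulness of the slice representations does yield local standardness; when $R=\Qo$ one only has a rational homology manifold with boundary, so the duality step must be run in that setting (note that $\Zt$ coefficients, which you offer as a fallback, are not among the coefficients allowed in the statement). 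So this part is fixable, but in the proposal it is asserted rather than proved, and it hides a dependence on the very hypotheses the proposition is about.
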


Combining Lemma~\ref{lemTechJgeneral}, Proposition~\ref{propMasPan} and some inductive arguments we get the following statement.

\begin{prop}\label{propAcyclicityMain}
Assume that the action of $T=T^k$ on $X=X^{2n}$ is equivariantly formal and $j$-independent, $j\geqslant 1$.
The faces $F$ of the orbit space $Q=X/T$ have the following homological properties.
\begin{enumerate}
 \item $H^i(F,F_{-1})=0$ for $i<\rk F$.
 \item If $\rk F<j$, then $H^*(F,F_{-1})\cong H^*(D^{\rk F}, \dd D^{\rk F})$. If $\rk F=j$, then $H^i(F,F_{-1})$ vanishes for $i<j$ and $i=j+1$.
 \item $Q$ is $(j+1)$-acyclic (that is $\Hr^i(Q)=0$ for $i\leqslant j+1$).
 \item $Q_r$ is $\min(r-1,j+1)$-acyclic.
\end{enumerate}
\end{prop}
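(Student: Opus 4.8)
The plan is to induct on $\dim X$, exploiting that each face $F$ of rank $r$ is the orbit space of the induced action of $T/T_F\cong T^r$ on the face submanifold $X_F$; this action is equivariantly formal by Lemma~\ref{lemInvarIsFormal} and, as soon as $r\geqslant j$, again $j$-independent by Lemma~\ref{lemTechJgeneral}(3). If $\com X=0$ everything is Proposition~\ref{propMasPan}, so I assume $\com X\geqslant 1$. Every proper face submanifold has $\dim X_F<\dim X$, so the whole proposition holds for the $T/T_F$-action on it by the inductive hypothesis; applying this to the top face of the orbit space of $X_F$, I obtain parts (1) and (2) for every proper face $F$ of $Q$. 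It then remains to prove parts (1)--(3) for $Q$ itself (the top face, of rank $k$) and to deduce part (4).

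Granting part (1) for all faces, part (4) is formal. By excision $H^i(Q_r,Q_{r-1})\cong\bigoplus_{\rk F=r}H^i(F,F_{-1})$, which vanishes for $i<r$ by part (1); hence the inclusion $Q_{r-1}\hookrightarrow Q_r$ is an isomorphism on $\Hr^i$ for $i\leqslant r-2$ and injective for $i=r-1$. Composing these maps up to $Q=Q_k$ gives $\Hr^i(Q_r)\cong\Hr^i(Q)$ for all $i\leqslant r-1$, so part (3) at once yields the $\min(r-1,j+1)$-acyclicity of $Q_r$.

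For part (2) with $\rk F=j$, the vanishing below degree $j$ is just part (1) for $F$. The extra vanishing at degree $j+1$ I would read off the long exact sequence of $(F,F_{-1})=(F,F_{j-1})$: part (3) for $X_F$ makes $F$ $(j+1)$-acyclic, so $\Hr^j(F)=\Hr^{j+1}(F)=0$, while every face of rank $<j$ has complexity $0$ by Lemma~\ref{lemTechJgeneral}(2) and hence is a homology cell of dimension its rank by Proposition~\ref{propMasPan}; thus $F_{j-1}$ is a homology cell complex of dimension $\leqslant j-1$ and $\Hr^j(F_{j-1})=0$. The sequence then forces $H^{j+1}(F,F_{-1})\cong\Hr^j(F_{j-1})=0$.

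The hard part is parts (1) and (3) for $Q$. I would feed parts (1)--(2) for the faces into the spectral sequence of the skeletal filtration, $E_1^{p,q}=\bigoplus_{\rk F=p}H^{p+q}(F,F_{-1})\Rightarrow H^{p+q}(Q)$: every column then lies in $q\geqslant 0$, the columns $p<j$ sit in $q=0$, and the entry $(p,q)=(j,1)$ is zero, so a degree count confines every contribution to $\Hr^n(Q)$ with $n\leqslant j+1$ to the bottom row. Consequently $\Hr^n(Q)$ for $n\leqslant j+1$ is a subquotient of the cohomology of the single cochain complex $(E_1^{\bullet,0},d_1)$ read off from the poset $S(X)$, and part (3) amounts to the exactness of this complex in degrees $1\leqslant n\leqslant j+1$. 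This exactness is the real obstacle: the combinatorial cell structure only identifies these groups with the complex but says nothing about its acyclicity, which must come from the global topology of $X$. I expect to supply it following the key argument of~\cite{AyzMasEquiv}, either by comparing the filtration with the free $H^*(BT)$-module $H^*_T(X)$ granted by equivariant formality, or by invoking Poincar\'e--Lefschetz duality on the open principal stratum $Q\setminus Q_{k-1}$ to bound its homological dimension by $2\com X$, which is the dual form of part (1) for $Q$. Making one such global input precise, and verifying that it dovetails with the combinatorial reduction above, is where equivariant formality (as opposed to mere $j$-independence) is indispensable and constitutes the core of the argument.
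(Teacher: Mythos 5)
Your reduction scheme is essentially the right one, and parts of it are cleaner than the paper's own treatment: deducing item (4) from items (1) and (3) by composing restriction maps through the long exact sequences of the pairs $(Q_s,Q_{s-1})$ is correct (the paper instead reruns a truncated spectral sequence), and your derivation of the degree-$(j+1)$ vanishing in item (2) from the acyclicity of $F$ and the fact that $F_{-1}$ is a homology cell complex of dimension $\leqslant j-1$ is also fine. The induction on $\dim X$ via Lemmas~\ref{lemInvarIsFormal} and~\ref{lemTechJgeneral} legitimately reduces everything to the statements about the top face $Q$ itself.

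However, the argument stops exactly where the real content begins, and in one place it silently borrows a statement it still owes. First, your degree count in the filtration spectral sequence asserts that ``every column lies in $q\geqslant 0$''; for the last column $E_1^{k,q}=H^{k+q}(Q,Q_{k-1})$ this is precisely item (1) for $Q$, which the induction does not provide (only proper faces have smaller dimension) and which you never prove --- the Poincar\'e--Lefschetz remark remains a gesture. Without it, nonzero entries with $q<0$ in column $k$ could survive to $E_\infty$ in total degrees $\leqslant j+1$, and the count collapses. Second, the exactness of the bottom row $(E_1^{\bullet,0},d_1)$ in degrees $1,\ldots,j+1$, which you correctly identify as equivalent to item (3), is left at the level of ``I expect to supply it.'' This is not a detail that can be finessed: it is exactly the theorem the paper invokes, namely that the bottom row coincides with the degree-zero part of the Atiyah--Bredon--Franz--Puppe sequence~\eqref{eqABseqForX}, which is exact for equivariantly formal actions by Bredon~\cite{Bredon} (rational coefficients) and Franz--Puppe~\cite{FP} (integers, connected stabilizers); moreover, items (1)--(3) themselves are quoted by the paper from~\cite{AyzMasEquiv} (Lm.~3.3, Lm.~3.2, Thm.~2). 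Your first suggested strategy (comparison with the free $H^*(BT)$-module $H^*_T(X)$) is indeed the right track, but until that global input is actually proved or properly cited, items (1) and (3) for $Q$ --- and everything you derive from them --- remain unproven.
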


\begin{proof}
Item (1) is proved in~\cite[Lm.3.3]{AyzMasEquiv}. Item (2) is proved in~\cite[Lm.3.2]{AyzMasEquiv}. Item (3) is~\cite[Thm.2]{AyzMasEquiv}. Item (4) is not stated in~\cite{AyzMasEquiv} explicitly, however, its proof follows the same lines as the proof of item (3). We outline the main ideas.

Consider the cohomology spectral sequence associated with the filtration
\begin{equation}\label{eqFiltrQbig}
Q_0\subset Q_1\subset\cdots\subset Q_{r-1}\subset Q_r\subset\cdots\subset Q_k=Q,
\end{equation}
that is $E_1^{p,q}=H^{p+q}(Q_p,Q_{p-1})$. Notice that $H^*(Q_p,Q_{p-1})\cong \bigoplus_{F\mid \rk F=p}H^*(F,F_{-1})$. Items 1 and 2 imply the vanishing of $E_1$ as shown on Fig.~\ref{figSpecSec}.

\begin{figure}[h]
\begin{center}
\begin{tikzpicture}[scale=0.7]
        \draw[->]  (0,0)--(9,0);
        \draw[->]  (0,-1.5)--(0,5);
        \draw (1,0)--(1,1); \draw (2,0)--(2,1); \draw (4,0)--(4,4); \draw (5,0)--(5,4);
        \draw (6,0)--(6,3); \draw (7,0)--(7,2); \draw (8,0)--(8,1);

        \draw (0,1)--(8,1); \draw (4,2)--(7,2); \draw (4,3)--(6,3); \draw (4,4)--(5,4);

        \draw (0.5,0.5) node{$\ast$}; \draw (1.5,0.5) node{$\ast$}; \draw (3,0.5) node{$\cdots$};

        \draw (4.5,4.5) node{$\vdots$};
        \draw (4.5,3.5) node{$\ast$}; \draw (5.5,3.5) node{$\iddots$};
        \draw (4.5,2.5) node{$\ast$}; \draw (5.5,2.5) node{$\ast$}; \draw (6.5,2.5) node{$\iddots$};
        \draw (4.5,1.5) node{$0$}; \draw (5.5,1.5) node{$\ast$}; \draw (6.5,1.5) node{$\ast$}; \draw (7.5,1.5) node{$\iddots$};
        \draw (4.5,0.5) node{$\ast$}; \draw (5.5,0.5) node{$\ast$}; \draw (6.5,0.5) node{$\ast$}; \draw (7.5,0.5) node{$\ast$}; \draw (8.5,0.5) node{$\cdots$};


        \draw (1,4) node{$E_1^{p,q}$};
        \draw (0.5,-0.3) node{\tiny $0$}; \draw (4.5,-0.3) node{\tiny $j$}; \draw (-0.3,0.5) node{\tiny $0$};
        \draw (8.5,-0.3) node{$p$}; \draw (-0.3,4.5) node{$q$};

        \draw (2,2) node{\large $0$}; \draw (2,-1) node{\large $0$}; \draw (6,-1) node{\large $0$};
\end{tikzpicture}
\end{center}
\caption{The first page of the spectral sequence.} \label{figSpecSec}
\end{figure}

The 0-th row $(E^{p,0}_1,d^1)$ coincides with the differential complex
\begin{multline}\label{eqABdeg0nonAug}
0\to H^0_T(X_0)\stackrel{\delta_0}{\to}
H^{1}_T(X_1,X_0)\stackrel{\delta_1}{\to}\cdots\\\cdots
\stackrel{\delta_{k-2}}{\to}H^{k-1}_T(X_{k-1},X_{k-2})\stackrel{\delta_{k-1}}{\to}H^{k}_T(X,X_{k-1})\to 0.
\end{multline}
which is the degree 0 part of the non-augmented version of the sequence of Atiyah, Bredon, Franz, and Puppe
\begin{multline}\label{eqABseqForX}
0\to H^*_T(X)\stackrel{i^*}{\to} H^*_T(X_0)\stackrel{\delta_0}{\to}
H^{*+1}_T(X_1,X_0)\stackrel{\delta_1}{\to}\cdots\\\cdots
\stackrel{\delta_{k-2}}{\to}H^{*+k-1}_T(X_{k-1},X_{k-2})\stackrel{\delta_{k-1}}{\to}H^{*+k}_T(X,X_{k-1})\to 0.
\end{multline}
(see details in~\cite{AyzMasEquiv}). The sequence~\eqref{eqABseqForX} is acyclic for equivariantly formal actions according to \cite{Bredon} (for rational coefficients) and Franz--Puppe \cite{FP} (over integers presuming connectedness of stabilizers). Therefore, when passing from $E_1^{*,*}$ to $E_2^{*,*}=H(E_1^{*,*},d^1)$ the whole 0-th row disappears except for $E_2^{0,0}\cong H^0_T(X)\cong R$. So far, the second page has the form shown on Fig.~\ref{figSpecSec2}

\begin{figure}[h]
\begin{center}
\begin{tikzpicture}[scale=0.7]
        \draw[->]  (0,0)--(9,0);
        \draw[->]  (0,-1.5)--(0,5);
        \draw (1,0)--(1,1); \draw (2,0)--(2,1); \draw (4,0)--(4,4); \draw (5,0)--(5,4);
        \draw (6,0)--(6,3); \draw (7,0)--(7,2); \draw (8,0)--(8,1);

        \draw (0,1)--(8,1); \draw (4,2)--(7,2); \draw (4,3)--(6,3); \draw (4,4)--(5,4);

        \draw (0.5,0.5) node{$R$}; \draw (1.5,0.5) node{$0$}; \draw (3,0.5) node{$\cdots$};

        \draw (4.5,4.5) node{$\vdots$};
        \draw (4.5,3.5) node{$\ast$}; \draw (5.5,3.5) node{$\iddots$};
        \draw (4.5,2.5) node{$\ast$}; \draw (5.5,2.5) node{$\ast$}; \draw (6.5,2.5) node{$\iddots$};
        \draw (4.5,1.5) node{$0$}; \draw (5.5,1.5) node{$\ast$}; \draw (6.5,1.5) node{$\ast$}; \draw (7.5,1.5) node{$\iddots$};
        \draw (4.5,0.5) node{$0$}; \draw (5.5,0.5) node{$0$}; \draw (6.5,0.5) node{$0$}; \draw (7.5,0.5) node{$0$}; \draw (8.5,0.5) node{$\cdots$};


        \draw (1,4) node{$E_2^{p,q}$};
        \draw (0.5,-0.3) node{\tiny $0$}; \draw (4.5,-0.3) node{\tiny $j$}; \draw (-0.3,0.5) node{\tiny $0$};
        \draw (8.5,-0.3) node{$p$}; \draw (-0.3,4.5) node{$q$};

        \draw (2,2) node{\large $0$}; \draw (2,-1) node{\large $0$}; \draw (6,-1) node{\large $0$};
\end{tikzpicture}
\end{center}
\caption{The second page of the spectral sequence.} \label{figSpecSec2}
\end{figure}

This implies $E_\infty^{p,q}=0$ for $0<p+q\leqslant j+1$ and therefore $\Hr_i(Q)=0$ for $i\leqslant j+1$ which proves item (3) of the proposition.

To prove item (4), consider the spectral sequence associated with the filtration~\eqref{eqFiltrQbig} cut at $r$-th term. In this case, passing from $E_1$ to $E_2$ may result in additional nonzero entry $E^{r,0}_2$ at the rightmost position of $0$-th row. If $r\geqslant j+2$, we still have $E_\infty^{p,q}=0$ for $p+q\leqslant j+1$ and therefore $\Hr_i(Q_r)=0$ for $i\leqslant j+1$. Otherwise, if $r<j+2$, the vanishing in the spectral sequence only implies $(r-1)$-acyclicity of~$Q_r$. This completes the proof of item~(4).
\end{proof}

\begin{cor}\label{corFacesAcyclic}
If a $T$-action on $X$ is equivariantly formal and $j$-independent, then each face $F\subset Q=X/T$ is $(j+1)$-acyclic.
\end{cor}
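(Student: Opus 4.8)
The plan is to realize each face $F$ not as an abstract piece of the ambient orbit space $Q$, but as the \emph{entire} orbit space of a torus action in its own right, so that the global statements of Proposition~\ref{propAcyclicityMain} and Proposition~\ref{propMasPan} can be applied recursively. Concretely, for a face $F$ of rank $r=\rk F$ I would pass to the face submanifold $X_F=p^{-1}(F)$ (Lemma~\ref{lemFaceSubmanifold}) and its common stabilizer $T_F$ (Construction~\ref{conCommonStabilizer}). Since $T_F$ acts trivially on $X_F$, the quotients of $X_F$ by $T$ and by $T/T_F$ coincide, so $F=X_F/(T/T_F)$ is exactly the orbit space of the effective $(T/T_F)$-action on $X_F$, an action with $\dim(T/T_F)=r$. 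By Lemma~\ref{lemInvarIsFormal} and Lemma~\ref{lemEquivFormFixedPoints} this restricted action is again equivariantly formal, since $H^{\odd}(X_F)=0$ and $X_F^T$ is finite and nonempty (being contained in $X^T$). Thus $F$ is, literally, the ``$Q$'' of a smaller equivariantly formal action, and the only remaining task is to certify enough independence of that action.

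I would then split into two regimes according to $\rk F$ versus $j$, invoking Lemma~\ref{lemTechJgeneral}. If $r<j$, part (2) of that lemma gives $\com X_F=0$, i.e. the $(T/T_F)$-action on $X_F$ is a complexity-zero action; Proposition~\ref{propMasPan}(2) then yields $\Hr^*(F)=0$, so $F$ is acyclic and in particular $(j+1)$-acyclic. If instead $r\geqslant j$, part (3) of the lemma guarantees that the restricted $(T/T_F)$-action on $X_F$ is itself $j$-independent (and $j\geqslant 1$), so Proposition~\ref{propAcyclicityMain}(3) applies verbatim to this smaller action and shows that its orbit space $F$ is $(j+1)$-acyclic. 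In both cases $F$ is $(j+1)$-acyclic, which is the claim.

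The only genuinely delicate point is the reduction itself, not the homological estimate: one must be sure that $F$ really is the full orbit space of an action satisfying the hypotheses of the two propositions, rather than merely a subspace of $Q$. This rests on facts established earlier --- that $X_F$ is again a closed smooth invariant submanifold with nonempty fixed set (Lemmas~\ref{lemFaceSubmanifold} and~\ref{lemInvarIsFormal}), and that equivariant formality and the relevant degree of independence are inherited by it (Lemmas~\ref{lemInvarIsFormal} and~\ref{lemTechJgeneral}). A secondary subtlety, easily overlooked, is that the naive bound coming from Proposition~\ref{propAcyclicityMain}(3) applied in the low-rank case would only give $(r+1)$-acyclicity, which is weaker than what is wanted when $r<j$; this is precisely why the complexity-zero regime must be handled separately through Proposition~\ref{propMasPan}, where one obtains full acyclicity. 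Once the case split is made along $r<j$ versus $r\geqslant j$, no further computation is required.
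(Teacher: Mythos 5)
Your proposal is correct and follows essentially the same route as the paper's own proof: the same case split on $\rk F<j$ versus $\rk F\geqslant j$, using Lemma~\ref{lemTechJgeneral} to get complexity zero (hence acyclicity of $F$ via Proposition~\ref{propMasPan}) in the first case, and $j$-independence of the induced action on $X_F$ (hence $(j+1)$-acyclicity of $F$ via Proposition~\ref{propAcyclicityMain}) in the second. Your additional verification that $F$ is literally the orbit space of an equivariantly formal action of $T/T_F$ on $X_F$ with finite nonempty fixed-point set is exactly the reduction the paper leaves implicit.
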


\begin{proof}
If $\rk F<j$ then $\com X_F=0$ (by Lemma~\ref{lemTechJgeneral}), so $F=X_F/T$ is acyclic by Proposition~\ref{propMasPan}. If $\rk F\geqslant j$, then the induced action on $X_F$ is $j$-independent (again by Lemma~\ref{lemTechJgeneral}), so $F=X_F/T$ is $(j+1)$-acyclic by Proposition~\ref{propAcyclicityMain}.
\end{proof}

\section{Topology of face posets}\label{secPosets}

Recall that $S(X)$ denotes the face poset of $Q$ (or the poset of face submanifolds in $X$) ordered by inclusion. Let $S(X)_r$ denote the subposet of all faces of rank $\leqslant r$.

The symbol $|S|$ denotes the geometrical realization of a poset $S$ that is the geometrical realization of the order complex $\ord S$ (the simplicial complex whose simplices are chains in $S$). An almost immediate corollary from Proposition~\ref{propAcyclicityMain} is the following.

\begin{cor}\label{corAcyclOfSQ}
Assume that $T$-action on $X$ is equivariantly formal and $j$-in\-de\-pen\-dent. Then the geometrical realization $|S(X)_{r}|$ is $(r-1)$-acyclic for $r<j$.
\end{cor}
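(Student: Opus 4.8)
The plan is to relate the topology of the geometric realization $|S(X)_r|$ to that of the filtration term $Q_r$ of the orbit space, and then invoke item (4) of Proposition~\ref{propAcyclicityMain}, which already establishes that $Q_r$ is $\min(r-1,j+1)$-acyclic. For $r<j$ the bound $\min(r-1,j+1)$ equals $r-1$, so it suffices to produce a map between $|S(X)_r|$ and $Q_r$ that is an acyclicity-preserving comparison in the relevant range of degrees.

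First I would make precise the combinatorial model. By Remark~\ref{remEqFormalSkeleta}, in the equivariantly formal case $Q_r$ is exactly the union of the faces $F$ with $\rk F\leqslant r$, and these faces, partially ordered by inclusion, form the poset $S(X)_r$. Moreover, Corollary~\ref{corFacesAcyclic} tells us that each face $F$ is $(j+1)$-acyclic, and its lower-rank boundary pieces $F_{-1}$ are unions of strictly smaller faces; by Lemma~\ref{lemTechJgeneral}, for $\rk F<j$ every such face has complexity~$0$ and is thus a genuine homology cell (Proposition~\ref{propMasPan}). This is the key input: in the range $r<j$, the filtration $\{Q_i\}_{i\leqslant r}$ is a \emph{homology cell complex} whose cells are indexed precisely by the elements of $S(X)_r$, with face incidences matching the poset order.

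The central tool I would use is the homotopy colimit / nerve comparison. The orbit space $Q_r$, being built from the faces $\{X_F/T : \rk F \leqslant r\}$ glued along their intersections, is homology-equivalent (via a Mayer--Vietoris / generalized nerve argument, or a spectral sequence of the covering by faces) to the nerve of this cover, which is the order complex $\ord(S(X)_r)$, i.e.\ $|S(X)_r|$. Because each face is acyclic and each intersection of faces is again a union of faces (hence its nerve is contractible in the appropriate range), the nerve lemma in its homological form identifies $\wt H_i(Q_r)\cong \wt H_i(|S(X)_r|)$ for $i$ below the acyclicity threshold. Since $Q_r$ is $(r-1)$-acyclic for $r<j$ by Proposition~\ref{propAcyclicityMain}(4), the same holds for $|S(X)_r|$.

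The hard part will be justifying the nerve/homotopy-colimit comparison in the correct range: the naive nerve lemma requires all finite intersections of the covering faces to be acyclic (or empty), and here the intersections are lower-rank faces, so one needs the acyclicity of faces from Corollary~\ref{corFacesAcyclic} to propagate correctly through the Mayer--Vietoris spectral sequence only up to degree $r-1$. I would organize this as a comparison of the two spectral sequences: the filtration spectral sequence for $Q_r$ used in Proposition~\ref{propAcyclicityMain}, and the skeletal filtration spectral sequence of the order complex $|S(X)_r|$. Both have $E_1$-terms supported on the faces (homology cells for rank $<j$), and the restriction $r<j$ guarantees that no higher-complexity faces enter, so the two $E_1$-pages and their differentials agree in total degree $\leqslant r-1$. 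Matching these pages yields the claimed $(r-1)$-acyclicity of $|S(X)_r|$.
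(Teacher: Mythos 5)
Your overall strategy is the same as the paper's proof: for $r<j$ every face of $Q_r$ has complexity zero by Lemma~\ref{lemTechJgeneral}, hence is a homology cell by Proposition~\ref{propMasPan}, so the orbit type filtration makes $Q_r$ a regular homology cell complex indexed by $S(X)_r$; one then identifies $H_*(|S(X)_r|)\cong H_*(Q_r)$ and concludes from item (4) of Proposition~\ref{propAcyclicityMain}. This is exactly the paper's argument, which cites \cite[Prop.5.14]{MasPan} and \cite[Prop.2.7]{Ay1} for the identification of the two homologies.

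One intermediate step of your write-up is wrong and should be removed, though it is not load-bearing since you abandon it yourself. The nerve of the covering of $Q_r$ by its closed faces is \emph{not} the order complex of $S(X)_r$: vertices of the nerve are faces and its simplices are collections of faces with nonempty common intersection, not chains in the poset. Moreover, the nerve lemma is genuinely inapplicable here, because the intersection of two faces need not be a single lower-rank face --- it can be a \emph{disconnected} union of lower faces (the paper's face-ring relations $v_Fv_H-v_{F\vee H}\sum_{E\subset F\cap H}v_E$ are written precisely to account for the several connected components $E$ of $F\cap H$), and a disconnected intersection is not acyclic. Your fallback --- comparing the rank-filtration spectral sequence of $Q_r$ with the corresponding filtration spectral sequence of $|S(X)_r|$ --- is the correct route and is what the cited ``standard arguments'' amount to; the only point you gloss over is that what makes the two $E_1$-pages and their $d_1$-differentials agree is not merely acyclicity of the faces but \emph{regularity}: by items (2) and (3) of Proposition~\ref{propMasPan}, each $F_{-1}$ is a homology sphere of dimension $\rk F-1$, and this is what pins down the incidence numbers on both sides (an induction on rank is also needed to know that $|S(X)_{<F}|$ is a homology sphere, so that the $E_1$-term of the poset side is concentrated in the bottom row).
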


\begin{proof}
As was noticed earlier, if $r<j$, the orbit type filtration on $Q_r$ is a homological cell filtration with regular cells (essentially due to Proposition~\ref{propMasPan}). Standard arguments with the spectral sequences are used to prove the isomorphisms $H_*(|S(X)_{r}|)\cong H_*(Q_r)$ for regular homological cell complexes (see e.g.~\cite[Prop.5.14]{MasPan} or \cite[Prop.2.7]{Ay1}). The rest follows from item (4) of Proposition~\ref{propAcyclicityMain}.
\end{proof}

With a bit more complicated arguments we can prove a stronger statement.

\begin{thm}\label{thmAcyclAll}
Assume that $T$-action on $X$ is equivariantly formal and $j$-in\-de\-pen\-dent. Then the geometrical realization $|S(X)_{r}|$ is $\min(r-1,j+1)$-acyclic for any~$r$.
\end{thm}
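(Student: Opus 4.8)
The plan is to upgrade the argument of Corollary~\ref{corAcyclOfSQ} from the complexity-zero regime, where the faces are genuine homology cells and $|S(X)_r|$ is literally a subdivision of $Q_r$, to the general situation, where a face $F$ is only \emph{acyclic} rather than a cell. The natural tool for this passage is a homotopy-colimit model. Concretely, I would work over the poset $S(X)_r$ and consider the diagram $\mathcal{D}\colon S(X)_r\to\Top$ sending a face $F$ to the closed orbit-space face $F\subset Q_r$, with $\mathcal{D}(F'\leqslant F)$ the inclusion $F'\hookrightarrow F$. Its colimit is $\colim\mathcal{D}=\bigcup_{\rk F\leqslant r}F=Q_r$, the last equality being exactly Remark~\ref{remEqFormalSkeleta}. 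Two maps are then analysed: the canonical comparison $\hocolim\mathcal{D}\to\colim\mathcal{D}=Q_r$, and the projection $\hocolim\mathcal{D}\to\hocolim\underline{*}=|S(X)_r|$ induced by the terminal natural transformation $\mathcal{D}\Rightarrow\underline{*}$ to the constant one-point diagram. The goal is to show both are homology isomorphisms in a suitable range and then invoke Proposition~\ref{propAcyclicityMain}(4).

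First I would show that $\hocolim\mathcal{D}\to Q_r$ is a homology equivalence, via the projection (nerve) lemma for poset-indexed closed covers. Its hypotheses are verified using equivariant formality: by Remark~\ref{remEqFormalSkeleta} every point $y\in Q_r$ lies in the relative interior of a unique face $F_0(y)$ of rank $\leqslant r$, and for a closed face $F$ one has $y\in F$ if and only if $F\geqslant F_0(y)$. Moreover, for any two faces one checks $F\cap F'=\bigcup\{F''\mid F''\leqslant F,\ F''\leqslant F'\}$, so the closed-face cover is compatible with the order. Consequently the point-inverse subposet $\{F\in S(X)_r\mid y\in F\}=(S(X)_{\geqslant F_0(y)})\cap S(X)_r$ has least element $F_0(y)$, hence contractible nerve, and the projection lemma gives the equivalence.

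Next I would show that the projection $\hocolim\mathcal{D}\to|S(X)_r|$ is a homology isomorphism in degrees $\leqslant j+1$ by running the homotopy-colimit (Bousfield--Kan) spectral sequence $E^2_{s,t}=H_s\big(S(X)_r;\,F\mapsto H_t(\mathcal{D}(F))\big)\Rightarrow H_{s+t}(\hocolim\mathcal{D})$. Each face $F$ is connected and, by Corollary~\ref{corFacesAcyclic}, is $(j+1)$-acyclic, so the coefficient system $F\mapsto H_t(F)$ is the constant system $\underline{R}$ for $t=0$ and vanishes for $1\leqslant t\leqslant j+1$. Hence $E^2_{s,0}=H_s(|S(X)_r|)$ and $E^2_{s,t}=0$ for $1\leqslant t\leqslant j+1$; inspecting the differentials shows that in total degree $\leqslant j+1$ only the bottom-row entries survive, which yields $H_i(\hocolim\mathcal{D})\cong H_i(|S(X)_r|)$ for $i\leqslant j+1$.

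Combining the two steps gives $H_i(|S(X)_r|)\cong H_i(Q_r)$ for $i\leqslant j+1$. Since $\min(r-1,j+1)\leqslant j+1$ and $Q_r$ is $\min(r-1,j+1)$-acyclic by Proposition~\ref{propAcyclicityMain}(4), the isomorphism forces $\Hr_i(|S(X)_r|)=0$ for $i\leqslant\min(r-1,j+1)$, which is the assertion. I expect the main obstacle to be the first step: one must verify carefully that the closed-face cover of $Q_r$ is genuinely modelled by the homotopy colimit over $S(X)_r$, i.e.\ that the projection lemma applies. This is precisely where equivariant formality is indispensable, through Remark~\ref{remEqFormalSkeleta} --- without it $Q_r$ need not be the union of rank-$r$ faces and the point-inverse posets can fail to be contractible --- while the local geometric-lattice structure of $S(X)_{\geqslant s}$ from~\cite{AyzCherep} is what secures the order-compatibility $F\cap F'=\bigcup\{F''\leqslant F,F'\}$ of the cover.
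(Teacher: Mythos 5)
Your overall strategy coincides with the paper's: compare $|S(X)_r|=\hocolim_{S(X)_r}\ast$ with $Q_r$ through $\hocolim_{S(X)_r}D_Q$, kill the higher rows of the homotopy-colimit spectral sequence using $(j+1)$-acyclicity of faces (Corollary~\ref{corFacesAcyclic}), and finish with Proposition~\ref{propAcyclicityMain}(4). Your second step is exactly the paper's Lemma~\ref{lemStrongHomotopyHomologyLm}, proved there via the spectral sequence of Proposition~\ref{propSpecSeqHocolim}, and your final assembly is the same. The gap is in your first step. There is no ``projection (nerve) lemma for poset-indexed closed covers'' with the hypotheses you verify: order-compatibility of intersections and the existence of a least face through every point are purely combinatorial conditions, and for \emph{closed} covers they do not imply that $\hocolim\to\colim$ is even an $H_0$-equivalence. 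Concretely, take $X=[0,1]$, $A=\{0\}\cup\bigcup_{n\geqslant1}\bigl[\tfrac{1}{2n+1},\tfrac{1}{2n}\bigr]$, $B=\{0\}\cup\bigcup_{n\geqslant0}\bigl[\tfrac{1}{2n+2},\tfrac{1}{2n+1}\bigr]$, $C=A\cap B=\{0\}\cup\{1/n\mid n\geqslant2\}$, indexed by the poset $c<a,\,b$. Every point lies in a least member of the cover, $A\cap B=C$ is order-compatible, and $\colim=[0,1]$ is contractible; yet the homotopy colimit (a double mapping cylinder) has \emph{two} path components --- the chain of intervals gives one, the doubled point $0$ another --- so the comparison map fails already on $H_0$. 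The culprit is that $C\hookrightarrow A$ is not a cofibration.

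The hypothesis actually needed in your first step is cofibrancy of the diagram $D_Q$, i.e.\ that inclusions of subfaces are closed cofibrations; this is the genuine hypothesis of the projection lemma in \cite{WZZ}. The paper secures it from Illman's equivariant triangulation theorem \cite{Illman2}: smooth torus actions admit equivariant cell structures, hence subface inclusions are cellular, the diagram $D_Q$ is cofibrant, and \eqref{eqHocolimIsColim} follows. Your checks via Remark~\ref{remEqFormalSkeleta} are not wasted --- they identify $\colim_{S(X)_r}D_Q$ with $Q_r$ as a space (a finite closed cover by faces) --- but they cannot substitute for cofibrancy, which is where the smooth/equivariant-triangulation input enters and which your argument never supplies. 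Once you insert that ingredient, your proof becomes precisely the paper's.
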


To prove this result, we recall several useful statements about homotopy colimits.

\begin{con}\label{conDiagrams}
Let $S$ be a finite poset, and $\cat(S)$ the finite category, whose objects are elements $s\in S$ and there is exactly one morphism $s_1\to s_2$ if $s_1\leqslant s_2$ (and no morphisms otherwise). An $S$-shaped topological diagram is a functor $D\colon\cat(S)\to \Top$ to the category of topological spaces. Two topological spaces can be associated with each topological diagram $D$: the colimit $\colim_S D$ and homotopy colimit $\hocolim_SD$. Colimit is a synonym for the direct limit of a diagram in the category of topological spaces. Homotopy colimit is the modified version of the colimit, well behaved under homotopy equivalences. The accessible exposition of homotopy colimits and their use in combinatorial topology can be found in~\cite{WZZ}.

There exists a constant diagram $\ast\colon\cat(S)\to\Top$, which maps each $s\in S$ to a point $\pt$. From the definitions it easily follows that $\colim_S\ast$ is a finite set of points corresponding to connected components of $|S|$, while $\hocolim_S\ast=|S|$.
\end{con}

\begin{con}\label{conDiagramOfFaces}
Let $T$ act on a smooth manifold $X$. Consider the diagram $D_Q\colon \cat(S(X))\to\Top$, which maps each face $F$ (as an abstract element of the face poset $S(X)$) to the face $F$ (as a topological space) with morphisms --- the natural inclusions of faces. Since the poset $S(X)$ has the greatest element (the space $Q$ itself), we have $\colim_{S(X)}D_Q=Q$.
\end{con}

Smooth toric actions always admit equivariant cell structures~\cite{Illman2}. This implies that inclusions of subfaces $F_1\hookrightarrow F_2$ of the orbit space $Q=X/T$ admit cellular structures, hence they are cofibrations. Moreover, this argument shows that the diagram $D_Q$ is cofibrant. Therefore,
\begin{equation}\label{eqHocolimIsColim}
\hocolim_{S(X)} D_Q\simeq \colim_{S(X)}D_Q.
\end{equation}

Let $t$ be a fixed nonnegative integer.

\begin{defin}\label{definTequival}
A map $\psi\colon X\to Y$ of topological spaces is called a $t$-equivalence, if the induced map $\psi_*\colon \pi_r(X,b)\to\pi_r(Y,\psi(b))$ is an isomorphism for all $r<t$, and surjective for $r=t$, and for all basepoints $b$.
\end{defin}

\begin{lem}[{Strong Homotopy Lemma~\cite[Lm.2.8]{BjWW}}]\label{lemStrongHomotopyLm}
Let $D_1$, $D_2$ be $S$-shaped diagrams. Let $\alpha\colon D_1\to D_2$ be a map of diagrams such that for each $s\in S$, the map $\alpha_s\colon D_1(s)\to D_2(s)$ is a $t$-equivalence. Then the induced map from $\hocolim_SD_1$ to $\hocolim_SD_2$ is a $t$-equivalence.
\end{lem}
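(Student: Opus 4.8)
The plan is to model the homotopy colimit by its simplicial replacement (the Bousfield--Kan bar construction) and to reduce the statement to the fact that geometric realization carries a degreewise $t$-equivalence of proper simplicial spaces to a $t$-equivalence. Recall that for a poset $S$ one has
\[
\hocolim_S D \;\cong\; \left|\, [p]\mapsto \coprod_{s_0\leqslant s_1\leqslant\cdots\leqslant s_p} D(s_0)\,\right|,
\]
the realization of a simplicial space whose degeneracies are inclusions of disjoint summands, hence cofibrations; thus the replacement is proper and its realization computes the genuine homotopy colimit (see~\cite{WZZ}). A map of diagrams $\alpha\colon D_1\to D_2$ induces a map of simplicial replacements which in simplicial degree $p$ is the disjoint union $\coprod_{s_0\leqslant\cdots\leqslant s_p}\alpha_{s_0}$. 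Since a $t$-equivalence is detected componentwise and a coproduct of $t$-equivalences is again a $t$-equivalence, each of these degreewise maps is a $t$-equivalence.

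Next I would set up the skeletal filtration $\mathrm{sk}_0\subset \mathrm{sk}_1\subset\cdots$ of the realization. Writing $B_p^{\mathrm{nd}}(D)=\coprod_{s_0<s_1<\cdots<s_p}D(s_0)$ for the nondegenerate part (indexed by strict chains), the $p$-th stage is attached by a pushout square with top map $\partial\Delta^p\times B_p^{\mathrm{nd}}(D)\to \mathrm{sk}_{p-1}(D)$, left map the cofibration $\partial\Delta^p\times B_p^{\mathrm{nd}}(D)\hookrightarrow \Delta^p\times B_p^{\mathrm{nd}}(D)$, and lower-right corner $\mathrm{sk}_p(D)$. I would then prove by induction on $p$ that $\mathrm{sk}_p(D_1)\to \mathrm{sk}_p(D_2)$ is a $t$-equivalence. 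The base case $\mathrm{sk}_0(D)=\coprod_s D(s)$ is the coproduct $\coprod_s\alpha_s$, a $t$-equivalence as above. For the inductive step, the three corner maps of the attaching span are $t$-equivalences: the corner $\mathrm{sk}_{p-1}$ by the inductive hypothesis, and the two corners $\partial\Delta^p\times B_p^{\mathrm{nd}}$ and $\Delta^p\times B_p^{\mathrm{nd}}$ because $\alpha$ induces a $t$-equivalence on $B_p^{\mathrm{nd}}$ and taking the product with a fixed CW complex $K$ preserves $t$-equivalences (on homotopy groups $\pi_r(Y\times K)\cong\pi_r(Y)\times\pi_r(K)$, so products of isomorphisms, respectively epimorphisms, remain such).

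The engine of the induction is the gluing lemma for $t$-equivalences: given a map between two homotopy-pushout spans whose three corner maps are $t$-equivalences, the induced map of homotopy pushouts is a $t$-equivalence. Applying it to the attaching span above yields that $\mathrm{sk}_p(D_1)\to\mathrm{sk}_p(D_2)$ is a $t$-equivalence, completing the induction. Because $S$ is finite it has no strict chains longer than $|S|$, so $B_p^{\mathrm{nd}}(D)=\varnothing$ for $p\geqslant |S|$ and the filtration stabilises: $\mathrm{sk}_{p}(D)=\hocolim_SD$ for $p$ large. Hence $\hocolim_SD_1\to\hocolim_SD_2$ is a $t$-equivalence, as claimed.

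The main obstacle is precisely this gluing lemma in its connectivity-graded ($t$-equivalence, rather than weak-equivalence) form. For honest weak equivalences it is the standard cube/gluing lemma in the model category of spaces; the refinement keeping track of the connectivity parameter $t$ is where the genuine homotopy-theoretic input sits, and it can be extracted from the homotopy excision (Blakers--Massey) theorem applied to the double mapping cylinders that compute the homotopy pushouts. Everything else---the properness of the bar construction, stability of $t$-equivalences under coproducts and under products with a fixed space, and the termination of the induction---is formal once finiteness of $S$ is invoked.
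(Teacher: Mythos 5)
Your proof is sound, but note at the outset that the paper contains no proof of this lemma at all: the statement is imported verbatim from~\cite{BjWW}, and the only argument the authors actually write out is for the homological analogue, Lemma~\ref{lemStrongHomotopyHomologyLm}, which they prove by comparing the spectral sequences $E^2_{p,q}=H_p(I;H_q(D))\Rightarrow h_{p+q}(\hocolim_I D)$ of Proposition~\ref{propSpecSeqHocolim} for the two diagrams. So your route is genuinely different. Your skeletal induction over the simplicial replacement is essentially the classical proof of the homotopical statement, and the details check out: over a finite poset the bar construction is split (degeneracies are inclusions of subcoproducts, and the nondegenerate part in degree $p$ is $\coprod_{s_0<\cdots<s_p}D(s_0)$ indexed by strict chains), so your pushout description of $\mathrm{sk}_p$ is valid; the filtration terminates because strict chains have length at most $|S|$; and the one nonformal input is exactly where you locate it, namely the gluing lemma for $t$-equivalences. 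That lemma is standard --- it is a corollary of homotopy excision (in the usual formulation, e.g.\ in tom Dieck's \emph{Algebraic Topology} or in May's \emph{Concise Course} as a consequence of Blakers--Massey, the intersection corner is even allowed to be only a $(t-1)$-equivalence, so your hypothesis of three $t$-equivalences is more than sufficient) --- so citing it rather than reproving it is legitimate. Comparing what each approach buys: the paper's spectral-sequence comparison is shorter and works for any generalized homology theory, but it is intrinsically homological --- it cannot see $\pi_1$ or basepoint data, which is precisely why the paper proves only the homology version that way and must cite \cite{BjWW} for the homotopy statement you proved; your argument delivers the genuine $\pi_*$-statement, and if one substitutes Mayer--Vietoris plus the five lemma for Blakers--Massey in the inductive step, it also reproves Lemma~\ref{lemStrongHomotopyHomologyLm} with no spectral sequences at all. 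One small point you handled implicitly but should keep visible: the paper's definition of $t$-equivalence quantifies over all basepoints, and it is this convention that makes your coproduct step, the product-with-$K$ step, and the non-connected cases of the gluing lemma go through.
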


As usual, this statement has a homological version.

\begin{defin}\label{definHomolTequiv}
A map $\psi\colon X\to Y$ is called a homological $t$-equivalence (over coefficient ring $R$), if the induced map $\psi_*\colon H_r(X;R)\to H_r(Y;R)$ is an isomorphism for all $r<t$ and surjective for $r=t$.
\end{defin}

\begin{lem}\label{lemStrongHomotopyHomologyLm}
Let $D_1$, $D_2$ be $S$-shaped diagrams. Let $\alpha\colon D_1\to D_2$ be a map of diagrams such that for each $s\in S$, the map $\alpha_s\colon D_1(s)\to D_2(s)$ is a homological $t$-equivalence. Then the induced map from $\hocolim_SD_1$ to $\hocolim_SD_2$ is a homological $t$-equivalence.
\end{lem}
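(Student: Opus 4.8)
The plan is to run the same machine as in the homotopy version (Lemma~\ref{lemStrongHomotopyLm}), but to replace homotopy groups by the homology spectral sequence of a homotopy colimit and to convert the ``$t$-equivalence'' hypothesis into an honest vanishing statement by means of a mapping cone. First I would realize each homotopy colimit as the geometric realization of its simplicial (bar) replacement: $\hocolim_S D_i$ is the realization of the simplicial space whose space of $p$-simplices is $\coprod_{s_0<\cdots<s_p}D_i(s_0)$. Passing to normalized singular chains over $R$ and forming the total complex, this yields a first-quadrant homological (Bousfield--Kan) spectral sequence
\[
E^1_{p,q}(D_i)=\bigoplus_{s_0<\cdots<s_p}H_q(D_i(s_0);R)\Rightarrow H_{p+q}(\hocolim_S D_i;R),
\]
see~\cite{WZZ}. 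The map of diagrams $\alpha$ induces a filtered chain map $\phi$ between the two total complexes, hence a morphism $f$ of spectral sequences whose effect on $E^1$ is the direct sum of the maps $\alpha_{s_0,*}\colon H_q(D_1(s_0))\to H_q(D_2(s_0))$. By hypothesis each $\alpha_{s,*}$ is an isomorphism for $q<t$ and an epimorphism for $q=t$, so $f^1_{p,q}$ is an isomorphism for $q<t$ and an epimorphism for $q=t$, for every $p$.

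The key step is to convert these ``iso below $t$, epi on the line $t$'' conditions into a vanishing statement that behaves well under passage to $E^\infty$. To this end I would form the algebraic mapping cone $M=\Cone(\phi)$ of the filtered chain map. Its homology fits into the long exact sequence relating $H_*(\hocolim_S D_1)$ and $H_*(\hocolim_S D_2)$, so that the induced map on homotopy colimits is a homological $t$-equivalence in the sense of Definition~\ref{definHomolTequiv} precisely when $H_n(M)=0$ for all $n\leqslant t$. The filtration on $M$ gives a spectral sequence whose first page sits in short exact sequences
\[
0\to \Coker(f^1_{p,q})\to E^1_{p,q}(M)\to \Ker(f^1_{p,q-1})\to 0.
\]
For $q\leqslant t$ the cokernel vanishes, since $f^1_{p,q}$ is surjective for $q\leqslant t$; and, because $q-1<t$, the kernel vanishes as well, since $f^1_{p,q-1}$ is an isomorphism. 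Hence $E^1_{p,q}(M)=0$ for all $p$ whenever $q\leqslant t$, so $E^\infty_{p,q}(M)=0$ in this range. Every $H_n(M)$ with $n\leqslant t$ has associated graded concentrated in bidegrees $(p,q)$ with $q=n-p\leqslant t$, and therefore vanishes. This gives the required conclusion.

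I expect the only delicate point to be the bookkeeping in the comparison: one must check that an isomorphism strictly below degree $t$ together with mere surjectivity in degree $t$ is exactly what kills both the $\Coker$ and the $\Ker$ terms of the cone for all $q\leqslant t$. The mapping-cone reformulation is precisely the device that makes this automatic, rather than forcing an $E^r$-by-$E^r$ five-lemma chase in which outgoing differentials can raise $q$ past $t$. Alternatively, one could follow the proof of Lemma~\ref{lemStrongHomotopyLm} in~\cite{BjWW} verbatim, inducting over the simplicial skeleta of the bar construction and invoking a homological (Mayer--Vietoris) gluing lemma at each attachment; this route is parallel but demands the same care at the degree-$t$ endpoint.
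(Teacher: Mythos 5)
Your proof is correct, but it is not the argument the paper gives, and the difference is worth spelling out. The paper applies Proposition~\ref{propSpecSeqHocolim} directly: it takes the morphism of spectral sequences $E^2_{p,q}=H_p(I;H_q(D_i))\Rightarrow H_{p+q}(\hocolim_I D_i)$ induced by $\alpha$, asserts (``inductively in $r$'') that this morphism is an isomorphism for $p+q<t$ or for $p+q=t$, $q<t$, and surjective at $(0,t)$, and then passes to $E^\infty$. Note that at the $E^2$ level the hypothesis only yields surjectivity at $p=0$ when $q=t$, because the higher derived colimit functors $H_p(I;-)$, $p>0$, do not preserve epimorphisms; consequently the page-by-page five-lemma induction is genuinely delicate --- incoming differentials into the line $p+q=t$ originate at total degree $t+1$, and outgoing differentials from the region $q<t$ can land at $q\geqslant t$, so the invariant as literally stated in the paper must be strengthened (one has to track, Zeeman-comparison style, an additional shrinking band of isomorphisms and a region of monomorphisms) for the induction to close. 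Your argument sidesteps exactly this point: working at the $E^1$ page of the simplicial-replacement (Bousfield--Kan) spectral sequence, the hypothesis gives $f^1_{p,q}$ iso for $q<t$ and epi for $q=t$ \emph{for all} $p$, since $E^1$ is a direct sum of the groups $H_q(D_i(s_0))$; then the mapping cone $M=\Cone(\phi)$ of the filtered chain map converts the comparison into a vanishing statement, and your short exact sequence $0\to\Coker(f^1_{p,q})\to E^1_{p,q}(M)\to\Ker(f^1_{p,q-1})\to 0$ indeed kills $E^1_{p,q}(M)$ for all $q\leqslant t$, hence $H_n(M)=0$ for $n\leqslant t$ (all associated graded pieces have $q=n-p\leqslant t$), hence the $t$-equivalence via the long exact sequence of the cone. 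So what the paper's route buys is brevity, quoting Proposition~\ref{propSpecSeqHocolim} as a black box; what your route buys is robustness, since the cone trick replaces the fragile $E^r$-by-$E^r$ chase by a one-page vanishing argument --- precisely at the spot where the paper's proof is thinnest. Your closing alternative (imitating \cite{BjWW} by induction over skeleta with Mayer--Vietoris) is also viable and is in fact acknowledged by the paper as a possible proof; the only housekeeping left in your main argument is the standard identification of $\hocolim_S D_i$ with the realization of the simplicial replacement, the fact that $\phi$ is filtration-preserving, and first-quadrant convergence of the cone's spectral sequence, all of which are routine.
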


Although the arguments of~\cite{BjWW} used to prove Lemma~\ref{lemStrongHomotopyLm} work for homology version, we provide an alternative proof based on spectral sequences. Recall that any diagram of spaces (CW-complexes) induces the spectral sequence.

\begin{prop}[{\cite[Prop.15.12]{Dugger}}]\label{propSpecSeqHocolim}
Let $D\colon I\to \Top$ be a diagram over a small category $I$, and $h_*(\cdot)$ --- a generalized homology theory. Then there is a spectral sequence
\[
E_{p,q}^2=H_p(I;h_q(D)) \Rightarrow h_{p+q}(\hocolim_ID).
\]
The differentials have the form $d_r\colon E_{p,q}^r\to E^r_{p-r,q+r-1}$.
\end{prop}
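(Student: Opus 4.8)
The plan is to realize $\hocolim_I D$ as the geometric realization of an explicit simplicial space and then run the homology spectral sequence of the resulting skeletal filtration. Recall the \emph{simplicial replacement} (bar construction) of the diagram: this is the simplicial space $B_\bullet D$ with space of $p$-simplices
\[
B_p D = \coprod_{i_0 \to i_1 \to \cdots \to i_p} D(i_0),
\]
the coproduct over all chains of $p$ composable morphisms in $I$; the face $d_0$ is induced on the summand indexed by $i_0 \to \cdots \to i_p$ by the structure map $D(i_0) \to D(i_1)$, the inner faces $d_k$ ($0<k<p$) by composing $i_{k-1}\to i_k \to i_{k+1}$, the last face $d_p$ by forgetting $i_p$, and the degeneracies by inserting identity morphisms. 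By the standard model for homotopy colimits one has $\hocolim_I D \simeq |B_\bullet D|$.

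First I would equip the realization with its skeletal filtration $F_0\subseteq F_1\subseteq\cdots$, where $F_p=|{\rm sk}_p B_\bullet D|$, and apply $h_*$. This yields at once the homology spectral sequence of a filtered space, with $E^1_{p,q}=h_{p+q}(F_p,F_{p-1})$ and differentials $d_r\colon E^r_{p,q}\to E^r_{p-r,q+r-1}$ of the asserted shape, so the content lies entirely in identifying the $E^1$ and $E^2$ pages. To compute $E^1$ I would analyze the filtration subquotients: for a good (levelwise cofibrant) simplicial space the cofiber $F_p/F_{p-1}$ is $\Sigma^p$ of the nondegenerate part $B_p D / L_p B_\bullet D$, which is the coproduct of the $D(i_0)$ over \emph{nondegenerate} $p$-chains. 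The suspension isomorphism together with the additivity axiom of $h_*$ then gives
\[
E^1_{p,q} \cong h_q\Big(\coprod_{i_0\to\cdots\to i_p}^{\mathrm{nondeg}} D(i_0)\Big)\cong \bigoplus_{i_0\to\cdots\to i_p}^{\mathrm{nondeg}} h_q(D(i_0)),
\]
which is exactly the group of normalized $p$-chains of $I$ with coefficients in the functor $i\mapsto h_q(D(i))$.

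Next I would identify $d^1$ and conclude. The $d^1$ differential is the alternating sum of the maps induced on $h_*$ by the simplicial faces; unwinding the $d_k$ above shows that $d^1$ acts on $\bigoplus h_q(D(i_0))$ by precisely the bar differential of the homology of the small category $I$ with coefficients in the covariant functor $h_q\circ D\colon I\to\Ab$ (the face $d_0$ contributes the map $h_q(D(i_0))\to h_q(D(i_1))$ induced by a morphism, the inner faces compose morphisms, and the last face forgets one). Hence $(E^1_{\ast,q},d^1)$ is the normalized complex computing $H_\ast(I;h_q(D))$, the homology of the category $I$ with coefficients (equivalently, the left derived functors of $\colim_I$), and passing to homology yields $E^2_{p,q}=H_p(I;h_q(D))$. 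Exhaustiveness of the filtration gives convergence to $h_{p+q}(|B_\bullet D|)=h_{p+q}(\hocolim_I D)$.

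The main obstacle is the identification of the subquotients $F_p/F_{p-1}$, which requires $B_\bullet D$ to be \emph{good}, i.e. the degeneracies to be cofibrations, so that the relative homology does split off the suspended nondegenerate part. In the situations relevant to this paper $I=\cat(S)$ is the poset category of a finite poset and each $D(s)$ is a CW-complex with cellular structure maps, so $B_\bullet D$ is automatically good and finite-dimensional; this simultaneously makes the filtration finite in each total degree, so convergence is unproblematic. The only genuinely conceptual point is matching the alternating-sum simplicial differential on $E^1$ with the bar differential defining category homology $H_\ast(I;-)$; once that identification is in hand, the remainder is the formal machinery of the spectral sequence of a filtered space.
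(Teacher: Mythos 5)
Your proof is correct, but note that the paper itself contains no proof of this statement to compare against: Proposition~\ref{propSpecSeqHocolim} is imported from Dugger's primer (Prop.~15.12), and the proof environment following the sentence ``Let us prove Proposition~\ref{propSpecSeqHocolim}'' actually proves Lemma~\ref{lemStrongHomotopyHomologyLm} \emph{by applying} the Proposition. What you have written is essentially the standard argument from the cited source: realize $\hocolim_ID$ as the realization of the simplicial replacement $B_\bullet D$, filter by skeleta, identify $F_p/F_{p-1}$ with the $p$-fold suspension of the nondegenerate part, and recognize $\bigl(E^1_{*,q},d^1\bigr)$ as the (normalized) bar complex computing $H_*(I;h_q(D))$. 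Two small refinements. First, goodness of $B_\bullet D$ is automatic for \emph{any} diagram, not just the finite CW situations of this paper: a degeneracy of a simplicial replacement inserts an identity morphism into a chain, hence is the inclusion of a coproduct summand and in particular a cofibration. Second, in the identification of the filtration quotient one should keep track of basepoints: since the degenerate chains split off as a coproduct summand, $B_pD/L_pB_\bullet D\cong (N_p)_+$ where $N_p$ is the nondegenerate part, and then suspension plus additivity of $h_*$ give $\widetilde h_{p+q}(F_p/F_{p-1})\cong\bigoplus h_q(D(i_0))$ over nondegenerate chains, exactly as you state. Your normalized complex computes the same homology as the unnormalized complex $C_p(I;\mathcal A)=\bigoplus_{x_0\to\cdots\to x_p}\mathcal A(x_0)$ which the paper takes as definition (2) of $H_p(I;\mathcal A)$, so your $E^2$-identification is consistent with the paper's conventions, and convergence is unproblematic since the skeletal filtration is exhaustive and bounded below and $h_*$ commutes with the sequential colimit over skeleta.
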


Here $h_q(D)$ denotes the diagram of abelian groups obtained by applying the functor $h_q(\cdot)$ to the topological diagram $D$ element-wise. The module $H_p(I;\ca{A})$ denotes the homology of a small category $I$ with coefficients in a functor $\ca{A}$, which can be defined by one of the equivalent constructions listed below.
\begin{enumerate}
 \item $H_p(I;\cdot)=\underrightarrow{\lim}_p(\cdot)$ is the $p$-th left derived functor of the direct limit functor
 \[
     \underrightarrow{\lim}\colon \Funct(I,\Ab)\to\Ab,
 \]
 where $\Ab$ is the category of abelian groups and $\Funct(I,\Ab)$ is the category of $I$-diagrams of abelian groups.
 \item $H_p(I;\ca{A})$ is the homology of the chain complex
 \[
 C_p(I;\ca{A})=\bigoplus\limits_{x_0\to\cdots\to x_p}\ca{A}(x_0),
 \]
 defined on the nerve of the category $I$.
\end{enumerate}
For equivalence of these constructions we refer to~\cite{QuilAlgK}. Let us prove Proposition~\ref{propSpecSeqHocolim}.

\begin{proof}
We can now prove Lemma~\ref{lemStrongHomotopyHomologyLm} by applying Proposition~\ref{propSpecSeqHocolim}. The diagram map $\alpha\colon D_1\to D_2$ induces the morphism of spectral sequences
\[
\xymatrix{
E_{p,q}^2(D_1) \ar@{=}[r] \ar@{->}[d]^{\alpha_*} & H_p(I;H_q(D_1))  \ar@{=>}[r] \ar@{->}[d] & H_{p+q}(\hocolim_ID_1) \ar@{->}[d]^{\alpha_*} \\
E_{p,q}^2(D_2) \ar@{=}[r] & H_p(I;H_q(D_2))  \ar@{=>}[r] & H_{p+q}(\hocolim_ID_2)
}
\]
Since $\alpha\colon D_1(s)\to D_2(s)$ is a homology $t$-equivalence for each entry $s\in S$, the induced map $\alpha_*\colon E_{p,q}^r(D_1)\to E_{p,q}^r(D_2)$ is an isomorphism for $p+q<t$ or $(p+q=t) \& (q<t)$, while it is surjective for $(p,q)=(0,t)$. This can be proved inductively in $r$, the index of the page. Finally, this implies that $\alpha_*\colon H_{p+q}(\hocolim_ID_1)\to H_{p+q}(\hocolim_ID_2)$ is an isomorphism for $p+q<t$ and surjective for $p+q=t$. Hence $\alpha$ induces a $t$-equivalence of homotopy colimits.
\end{proof}

Now we prove Theorem~\ref{thmAcyclAll}.

\begin{proof}
Let $D_Q\colon\cat(S(X))_r\to\Top$ be the diagram of faces, described in Construction~\ref{conDiagramOfFaces}, and $\ast\colon \cat(S(X))_r\to\Top$ be the constant diagram (which maps every element to a single point). We have a natural morphism of diagrams $\alpha\colon D_Q\to\ast$. Each face $F\in S(X)$ is $(j+1)$-acyclic by Corollary~\ref{corFacesAcyclic}. Therefore $\alpha$ is a $(j+2)$-equivalence on each entry of the diagram. Then Lemma~\ref{lemStrongHomotopyHomologyLm} states that the induced map
\[
\hocolim_{S(X)_r}D_Q\to \hocolim_{S(X)_r}\ast
\]
is a homology $(j+2)$-equivalence. However $\hocolim_{S(X)_r}D_Q\simeq \colim_{S(X)_r}D_Q$ since $D_Q$ is cofibrant. The colimit $\colim_{S(X)_r}D_Q$ is homeomorphic to the $r$-skeleton $Q_r$ by construction. The space $Q_r$ is $\min(r-1,j+1)$-acyclic by Proposition~\ref{propAcyclicityMain}. Therefore $\hocolim_{S(X)_r}\ast \cong |S(X)_r|$ is $\min(r-1,j+1)$-acyclic as well.
\end{proof}

These arguments prove item 1 of Theorem~\ref{thmMainIntroAcycl} from the introduction. Item 2 follows easily, since $S(X)_{\leqslant s}$ is naturally isomorphic to $S(Y)$, whenever $Y$ is a face submanifold corresponding to $s\in S(X)$. This finishes the proof Theorem~\ref{thmMainIntroAcycl}.

\section{Actions of complexity one in general position} \label{secGKMcomb}

In this section we prove Theorem~\ref{thmMainIntroFaceRing} from the Introduction. We apply a very particular case of acyclicity argument to describe the equivariant cohomology algebra of a manifold $X^{2n}$ with an equivariantly formal $(n-1)$-independent action of $T^{n-1}$, when $n\geqslant 5$. In this particular case the description boils down to the theory of Gorenstein face algebras, similar to the complexity zero actions studied in~\cite{MasPan}. As in the case of complexity zero, we start with the GKM description of equivariant cohomology.

Let us recall the basics of GKM theory (see details in~\cite{GKM,Kur}). While usually GKM manifolds refer to complex algebraic varieties with the action of an algebraic torus, we deal with the topological version of the GKM theory.

\begin{defin}\label{defGKMmfd}
A $2n$-dimensional (orientable connected) compact manifold $X$ with an action of $T=T^k$ is called \emph{a GKM manifold} (named after Goresky--Kott\-witz--Mac\-Pherson), if the following conditions hold:
\begin{enumerate}
\item $X$ is equivariantly formal;
\item The fixed point set $X_0=X^T$ is finite and nonempty;
\item The action is 2-independent.
\end{enumerate}
\end{defin}

The next proposition is often taken as a definition of a GKM manifold and is quite standard.

\begin{prop}\label{propGKMspheres}
The 1-dimensional equivariant skeleton $X_1$ of a GKM manifold is a union of $T$-invariant 2-spheres. Each invariant 2-sphere connects 2 fixed points.
\end{prop}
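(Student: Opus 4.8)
The plan is to analyze the structure of the equivariant $1$-skeleton $X_1$ using the local tangent representation at each fixed point together with the $2$-independence hypothesis. First I would recall that for a smooth action with isolated fixed points, each $x\in X^T$ has a neighborhood equivariantly diffeomorphic to its tangent representation $\tau_xX\cong V(\alpha_{x,1})\oplus\cdots\oplus V(\alpha_{x,n})$, as stated in the Preliminaries. The points of $X_1$ near $x$ are exactly those whose stabilizer has dimension $\geqslant k-1$, i.e.\ those lying on the $1$-dimensional orbits emanating from $x$. In the linear model $\tau_xX$, an orbit has dimension $\leqslant 1$ precisely when it sits inside one of the coordinate complex lines $V(\alpha_{x,i})$: indeed, a point with several nonzero complex coordinates in distinct summands $V(\alpha_{x,i}), V(\alpha_{x,j})$ generates an orbit whose dimension equals the rank of the sublattice spanned by $\alpha_{x,i}, \alpha_{x,j}$, which is $2$ by $2$-independence. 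Thus locally $X_1$ is the union of the $n$ invariant complex lines through $x$, and each such line is a $1$-dimensional $T$-invariant disc.

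Next I would upgrade this local picture to a global one. Each invariant complex line $V(\alpha_{x,i})$ through $x$ spans, under the $T$-action, a $2$-dimensional invariant submanifold; concretely it is a connected component of $X^{H_i}$ where $H_i=\ker\alpha_{x,i}$ is the codimension-one subtorus fixing that line. By Lemma~\ref{lemInvarIsFormal}, since $H^{\odd}(X)=0$ this component $\Sigma$ is itself equivariantly formal with $X^T\cap\Sigma\neq\varnothing$, and it is a closed orientable $2$-manifold, hence a sphere or a higher-genus surface. To see it is a sphere, I would apply Lemma~\ref{lemEquivFormFixedPoints} to $\Sigma$: equivariant formality forces $H^{\odd}(\Sigma)=0$, which rules out all closed orientable surfaces except $S^2$. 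So every component of $X_1$ is an invariant $2$-sphere.

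It then remains to verify that each such invariant $2$-sphere $\Sigma$ contains exactly two fixed points. Here I would use that $T$ acts on $\Sigma\cong S^2$ through a circle quotient $T/H_i$ (a rank-one torus acts on $\Sigma$ effectively after factoring out the common stabilizer), and the only effective circle actions on $S^2$ are rotations fixing exactly the north and south poles. Equivalently, the Euler characteristic argument gives $\chi(S^2)=2=|X^T\cap\Sigma|$ by the Lefschetz fixed point formula applied to a generic circle in $T/H_i$, since fixed points are isolated and nondegenerate. This pins down the two fixed points as the endpoints of the invariant sphere.

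The main obstacle I anticipate is the transition from the local linear model to a globally well-defined invariant submanifold: one must check that the germ of $X_1$ at each fixed point glues into a closed $2$-sphere rather than merely a union of arcs, and in particular that distinct invariant lines at $x$ do not accidentally coincide or branch. This is handled by identifying each invariant line with the right connected component of a fixed submanifold $X^{H_i}$ and invoking Lemma~\ref{lemFaceSubmanifold} and Lemma~\ref{lemInvarIsFormal} to guarantee that these components are closed, smooth, and equivariantly formal, so that the sphere conclusion and the count of two fixed points follow cleanly from the formality criterion of Lemma~\ref{lemEquivFormFixedPoints}.
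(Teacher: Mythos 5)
The paper gives no proof of this proposition (it is dismissed as ``quite standard''), so your argument must stand on its own; its second half is sound, but there is a genuine gap in the first. What you actually establish is one inclusion and a local statement: each sphere $\Sigma$ you construct (a component of $X^{\ker\alpha_{x,i}}$ through a fixed point) is contained in $X_1$, and in a chart around each fixed point $X_1$ coincides with the union of the $n$ invariant discs. From this you conclude ``so every component of $X_1$ is an invariant 2-sphere,'' but that does not follow: your local analysis lives entirely in neighborhoods of $X^T$, and it says nothing about points of $X_1$ that are not close to any fixed point. A priori $X_1$ could contain pieces --- even whole connected components --- made of $1$-dimensional orbits and containing no fixed point; this genuinely happens for non-formal actions, and it is precisely the subtlety the paper flags just before Remark~\ref{remEqFormalSkeleta} (with a counterexample cited from~\cite{AyzCherep}): the equivariant skeleton $X_r$ need not be the union of the rank-$r$ faces in general, and equivariant formality is exactly what rules this out. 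Your own ``main obstacle'' paragraph worries about the germs at fixed points closing up into spheres, which is the easy direction; the direction you omit is that all of $X_1$ is accounted for by such germs.

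The missing step can be supplied with the same tool you already use. Take any $x\in X_1\setminus X_0$, so $\dim T_x=k-1$, put $H=T_x^0$ (a subtorus), and let $N$ be the connected component of $X^H$ containing $x$. Then $N$ is a closed invariant submanifold, so Lemma~\ref{lemInvarIsFormal} gives $H^{\odd}(N)=0$ and, crucially, $N^T\neq\varnothing$. At a point $p\in N^T$ the tangent space $\tau_pN=(\tau_pX)^H$ is the sum of those $V(\alpha_{p,j})$ whose weights vanish on the $(k-1)$-dimensional torus $H$; such weights lie on a single rational line, so $2$-independence permits at most one of them, and since $N$ contains the two distinct points $x$ and $p$ it is positive-dimensional, hence $\dim N=2$. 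Now your sphere argument (orientability, $H^1(N)=0$, Euler characteristic) applies verbatim to $N$, showing that $x$ lies on one of the claimed spheres; this closes the gap. Two smaller points: you assert orientability of these surfaces without justification, and it is actually needed, since $H^{\odd}(\Ro P^2)=0$ as well (over $\Zo$ and over $\Qo$); it follows because the normal bundle of a component of $X^H$ carries a complex structure from the weight decomposition and $X$ is orientable by the definition of a GKM manifold. Also, the orbit dimension of a point with nonzero coordinates in several summands is the rank of the span of \emph{all} the corresponding weights, which is at least~$2$ by $2$-independence --- the inequality, not the equality you state, is what the argument needs, and it is correct.
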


\begin{cor}
The 1-skeleton $Q_1=X_1/T$ is a graph on the vertex set $Q_0\cong X_0$, whose edges correspond to 2-spheres between fixed points.
\end{cor}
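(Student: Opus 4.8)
The plan is to deduce the Corollary directly from Proposition~\ref{propGKMspheres} together with the definitions of the equivariant skeleton and the orbit space. By Proposition~\ref{propGKMspheres}, the $1$-dimensional equivariant skeleton $X_1$ decomposes as a union of $T$-invariant $2$-spheres, each of which connects exactly two fixed points. Passing to the orbit space $Q_1 = X_1/T$, the plan is to identify the combinatorial data of this decomposition with a graph. First I would observe that each invariant $2$-sphere $S$ carries a $T$-action whose fixed set consists of precisely its two endpoint fixed points $x, x'$, with all other orbits being $1$-dimensional; hence the image $S/T$ in $Q_1$ is an arc whose two boundary points are the images of $x$ and $x'$, and whose interior consists of $1$-dimensional orbits.

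Next I would assemble these pieces. Since $X_0 = X^T$ is the $0$-dimensional equivariant skeleton, its image $Q_0 = X_0/T$ is in bijection with $X_0$ (fixed points map homeomorphically to their orbit-space images), giving the vertex set. The images of the invariant $2$-spheres give the edges: each such sphere contributes an arc in $Q_1$ joining the two vertices corresponding to its endpoint fixed points. The identification $Q_1 = X_1/T$ as a topological space is then exactly the geometric realization of a graph with vertex set $Q_0$ and one edge per invariant $2$-sphere, since the spheres meet only at fixed points and these are the only points where arcs can share boundary.

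The main point requiring care — and the step I expect to be the only mild obstacle — is verifying that distinct invariant $2$-spheres give rise to genuinely distinct edges and that the arcs are attached only at their endpoints, so that no unexpected identifications occur in the quotient. Two invariant $2$-spheres can share at most their fixed points: away from fixed points every orbit is $1$-dimensional and lies in a unique invariant submanifold of rank one, as recorded in the discussion of faces in Section~\ref{secFaces}, so the interiors of the arcs are disjoint. This guarantees that the quotient structure on $Q_1$ is precisely that of a graph, with edges in bijection with the invariant $2$-spheres and vertices in bijection with $X_0$, completing the proof.
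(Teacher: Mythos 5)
Your proof is correct and follows the same route as the paper, which states this as an immediate consequence of Proposition~\ref{propGKMspheres}: each invariant $2$-sphere quotients to an arc joining the images of its two fixed points, and these arcs meet only at vertices. Your extra care about distinct spheres meeting only in fixed points (so that no unintended identifications occur in $Q_1$) is a worthwhile elaboration of what the paper leaves implicit, and it is the right justification.
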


Let $\str(p)$ denote the set of edges emanating from a given vertex $p$ of a graph. If $e\in\str(p)$ is an edge emanating from a fixed point $p\in Q_0$ to a fixed point $q$, this edge comes equipped with the weight $\alpha(pq)\in\Hom(T^k,T^1)$. This weight corresponds to the summand of the tangent representation $\tau_pX$ which is tangent to the invariant 2-sphere corresponding to $e$. It easily follows that $\alpha(pq)=\pm\alpha(qp)$.

\begin{defin}
A GKM graph $\G$ is a finite $n$-valent regular graph $(V,E)$ equip\-ped with a function $\alpha\colon E\to \Hom(T^k,T^1)$, which satisfies $\alpha(pq)=\pm\alpha(qp)$ for all edges $e=(pq)$. The function $\alpha$ is called \emph{an axial function}. The numbers $k$ and $n$ in the definition are called \emph{the rank} and \emph{the dimension} of a GKM graph $\G$.
\end{defin}

\begin{defin}
A \emph{GKM graph $\G$ with connection} is a GKM graph, equipped with additional data, the connection. \emph{A connection} $\theta$ is a collection of bijections $\theta_{(pq)}\colon \str(p)\to \str(q)$ for all edges $e=(pq)$ of a graph, satisfying the properties:
\begin{enumerate}
 \item $\theta_{e}e=e$ for all edges $e$;
 \item $\theta_{(qp)}^{-1}=\theta_{(pq)}$;
 \item The integral vector $\alpha(\theta_{(pq)}e)-\alpha(e)$ is collinear to $\alpha(pq)$ for any $e\in\str(p)$.
\end{enumerate}
\end{defin}

\begin{prop}[{\cite[Thm.3.4]{BGH}}]\label{propCanonConnection}
If $X$ is a GKM manifold, its 1-skeleton $Q_1$ is a GKM graph. If, moreover, the action on $X$ is $3$-independent, then $Q_1$ is a GKM graph equipped with a canonical connection.
\end{prop}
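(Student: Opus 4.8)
The plan is to combine the structure results already recorded for equivariantly formal actions with the classification of equivariant line bundles over the $2$-sphere. The statement splits into two parts, and only the second (the connection) needs the stronger $3$-independence hypothesis. \emph{The graph structure.} By Proposition~\ref{propGKMspheres} and its corollary, $Q_1=X_1/T$ is a graph on the vertex set $Q_0\cong X^T$ whose edges are the orbit spaces of the invariant $2$-spheres. First I would verify $n$-valence. Since a GKM manifold has isolated fixed points and $\dim X=2n$, at each $p\in X^T$ the tangent representation splits as $\tau_pX\cong\bigoplus_{i=1}^n V(\alpha_{p,i})$ with all $\alpha_{p,i}\neq 0$; $2$-independence forces the $\alpha_{p,i}$ to be pairwise non-collinear, so the $n$ weight lines are tangent to $n$ distinct invariant $2$-spheres through $p$. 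Hence $\str(p)$ has exactly $n$ elements and the graph is $n$-valent regular. The axial function sends an edge $e=(pq)$ to the weight of the tangent summand of $\tau_pX$ along the sphere $S_e$, and the relation $\alpha(pq)=\pm\alpha(qp)$ is the one already noted before the definition of a GKM graph. This settles the first assertion.

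\emph{Construction of the connection.} Now assume the action is $3$-independent and fix an edge $e=(pq)$. For each $e'\in\str(p)\setminus\{e\}$ I would form the join of the two rank-$1$ faces $e,e'$ in the poset $S(X)_{\geqslant p}$. By the local structure theorem of~\cite{AyzCherep} this poset is a geometric lattice, so the join $F=F(e,e')$ is a well-defined face of rank exactly $2$, and it is the unique rank-$2$ face containing both $e$ and $e'$. Because $\rk F=2<3$, Lemma~\ref{lemTechJgeneral}(2) gives $\com X_F=0$, so the face submanifold $Y=X_F$ is a $4$-dimensional equivariantly formal torus manifold for the effective action of $T/T_F\cong T^2$. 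By Proposition~\ref{propMasPan} the orbit space $F=Y/(T/T_F)$ is a $2$-dimensional homology cell whose boundary is its rank-$1$ skeleton; in this dimension this forces $F$ to be a polygon, so exactly two edges meet at each of its vertices. At $p$ these two edges are $e$ and $e'$ (the only edges of $X_F$ emanating from $p$, by the choice of $F$), and at $q$ they are $e$ and a uniquely determined edge $e''\in\str(q)$. I then set $\theta_{(pq)}(e'):=e''$ and $\theta_{(pq)}(e):=e$.

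\emph{Verification of the axioms and the main obstacle.} Axiom (1) holds by definition. For axiom (2) observe that $F(e,e')$ contains $e$ and $e''$ at $q$, so by uniqueness of the join in the geometric lattice $S(X)_{\geqslant q}$ it equals $F(e,e'')$; running the construction along $(qp)$ therefore returns $e'$, which is precisely $\theta_{(qp)}=\theta_{(pq)}^{-1}$, and matching $e'\mapsto e''$ in this way over all $e'\in\str(p)$ yields a bijection $\str(p)\to\str(q)$. The substantive step is axiom (3). Inside the $4$-manifold $Y$ the sphere $S_e\cong\CP^1$ has poles $p,q$, and its normal bundle $\nu$ is a $T$-equivariant complex line bundle over $\CP^1$ with fiber $V(\alpha(e'))$ at $p$ and $V(\alpha(e''))$ at $q$; the circle in $T$ dual to $\alpha(pq)$ rotates the base, and the classification of equivariant line bundles over $\CP^1$ shows the two pole weights differ by an integer multiple of the rotation weight, so $\alpha(\theta_{(pq)}e')-\alpha(e')=\alpha(e'')-\alpha(e')$ is collinear to $\alpha(pq)$. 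I expect this normal-bundle computation to be the crux: it is the only place where the $2$-sphere geometry rather than pure poset combinatorics enters, and it is exactly what fails if a rank-$2$ face is not forced to be a complexity-zero $4$-manifold, which is why $3$-independence, through Lemma~\ref{lemTechJgeneral}(2), is indispensable.
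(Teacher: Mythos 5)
Your proposal is correct, but it follows a genuinely different route from the paper's treatment: the paper does not prove this proposition at all --- it is imported verbatim from \cite[Thm.~3.4]{BGH}, where the argument is the classical one: for each invariant sphere $S_e$ one splits its normal bundle \emph{in the ambient manifold} $X$ into $T$-invariant complex line subbundles ($3$-independence guarantees that the weights of the stabilizer subtorus $\ker\alpha(pq)$ on a normal fibre are pairwise distinct, so the isotypic decomposition is by lines), and the matching of these line subbundles across the two poles \emph{is} the connection, with axiom (3) coming from constancy of the stabilizer-weight along each subbundle. You instead manufacture, for each pair $e,e'\in\str(p)$, the rank-two face submanifold $X_F$ via the geometric-lattice structure of $S(X)_{\geqslant p}$ from \cite{AyzCherep}, force $\com X_F=0$ by Lemma~\ref{lemTechJgeneral}(2), read off the polygon structure of $F$ from Proposition~\ref{propMasPan} (your ``homology cell $=$ cell in dimension $2$'' step is the same one the paper uses in Proposition~\ref{propColorability}), and only then run the line-bundle computation for the rank-one normal bundle of $S_e$ inside the $4$-manifold $X_F$; note that for axiom (3) mere collinearity is required, which already follows by restricting to $\ker\alpha(pq)$, which acts trivially on the base so the fibre weight is locally constant. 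There is no circularity in this: the lattice results of \cite{AyzCherep} concern face submanifolds and tangent representations, not GKM connections. The trade-off is that your proof genuinely uses equivariant formality (through Lemma~\ref{lemInvarIsFormal} and Proposition~\ref{propMasPan}), whereas the normal-bundle splitting of \cite{BGH} needs no formality at all and is more elementary; what your route buys is that the $2$-faces are produced explicitly and axioms (1)--(2), bijectivity of $\theta_{(pq)}$, and the canonicity (indeed uniqueness) of the connection become transparent --- which is the form in which the paper actually uses the connection later, in Lemmas~\ref{lemm:spans} and~\ref{lemm:1-1}.
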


Let $\G(X)$ denote the GKM graph corresponding to a torus action on a manifold $X$. The definitions of $j$-independency and the faces of the action inspire the following analogues for abstract GKM graphs.

\begin{defin}
A GKM graph $\G$ is called \emph{$j$-independent} if, for any vertex $p$ of $\G$ the axial values of any $\leqslant j$ edges of $\str(p)$ are linearly independent over $\Qo$.
\end{defin}

Proposition~\ref{propCanonConnection} implies that if $j\geqslant 3$, then the connection $\theta$ on a $j$-independent GKM graph is uniquely determined.

\begin{defin}[{\cite[Def.1.4.2]{GZ}}]
Let $\G$ be an abstract GKM graph with connection. A connected subgraph $\G'\subset \G$ is called \emph{a totally geodesic face} of rank $r$ if it is a GKM graph of rank $r$ and for any edge $pq\in \G'$ there holds $\theta_{pq}(\str(p)\cap \G')=\str(q)\cap \G'$.
\end{defin}

In the following we use the term \emph{face of a GKM graph} instead totally geodesic face for brevity. If a face has dimension $d$, we call it a $d$-face. If a face of a graph has codimension one, it is called a facet.

\begin{con}
If $Y$ is a face submanifold of a GKM manifold $X$, then the graph $\G(Y)$ is naturally a face of the graph $\G(X)$. We call such face \emph{a geometric face} of the GKM graph $\G(X)$. Not every face of $\G(X)$ is necessarily geometric. The simplest example is the full flag manifold $\Fl_3$: its GKM graph has 3 non-geometric totally geodesic faces, see~\cite[Fig.2]{AyzCherep}.

Let $S(\G)$ denote the poset of all faces of $\G$ ordered by inclusion. Although this poset does not coincide with the poset $S(X)$ of geometric faces in general, the poset $S(X)$ can be reconstructed from the GKM graph $\G(X)$ as described in~\cite{AyzCherep}.
\end{con}

\begin{lem}\label{lemm:spans}
Consider a $j$-independent GKM action on a manifold $X$, $j\geqslant 3$, so that $\GX$ is a $j$-independent GKM graph with connection. Then any $\leqslant j-1$ edges emanating from a common vertex span a unique face of $\GX$. This face is geometric.
\end{lem}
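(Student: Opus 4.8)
The plan is to establish existence geometrically, inside the manifold $X$, and uniqueness combinatorially, using the canonical connection. Fix a vertex $p$ and edges $e_1,\dots,e_r\in\str(p)$ with $r\leqslant j-1$, and let $\alpha_i=\alpha(e_i)$ be the corresponding tangent weights of $\tau_pX$. Since $r\leqslant j-1<j$ and the action is $j$-independent, $\alpha_1,\dots,\alpha_r$ are linearly independent over $\Qo$.

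For existence I would pass to the subtorus $H=\bigl(\bigcap_{i=1}^{r}\ker\alpha_i\bigr)^{0}\subset T$ of dimension $k-r$, and take $Y$ to be the connected component of $X^{H}$ containing $p$. By Construction~\ref{conInvarSubmfds} this is an invariant submanifold; every orbit in $Y$ has stabilizer containing $H$, hence dimension $\leqslant r$, so $Y\subset X_r$, and by Lemma~\ref{lemInvarIsFormal} the submanifold $Y$ is again equivariantly formal and contains a fixed point. The key point is to compute $\tau_pY=(\tau_pX)^{H}$, which is the sum of the weight spaces $V(\alpha_{p,l})$ with $\alpha_{p,l}\in\la\alpha_1,\dots,\alpha_r\ra_{\Qo}$. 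Here the hypothesis $r\leqslant j-1$ is used decisively: if some additional weight $\alpha_{p,l}$ lay in this span, the set $\{\alpha_1,\dots,\alpha_r,\alpha_{p,l}\}$ would consist of $r+1\leqslant j$ linearly dependent weights, contradicting $j$-independence. Hence $\tau_pY=V(\alpha_1)\oplus\cdots\oplus V(\alpha_r)$, so $\dim Y=2r$, the generic orbit in $Y$ has dimension $r$, and the identity component of the common stabilizer $T_Y$ equals $H$. Thus $\rk Y=r$ and $\com Y=0$, in agreement with Lemma~\ref{lemTechJgeneral}(2); in the equivariantly formal setting $Y$ is therefore a face submanifold (Remark~\ref{remEqFormalSkeleta}), and by Proposition~\ref{propGKMspheres} applied to $Y$ the edges of $\str_{\G(Y)}(p)$ are exactly $e_1,\dots,e_r$. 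Consequently $\G(Y)$ is a geometric face of $\G(X)$ whose edge set at $p$ is precisely $\{e_1,\dots,e_r\}$.

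For uniqueness I would show that any totally geodesic face $\G'$ with $\str(p)\cap\G'=\{e_1,\dots,e_r\}$ is forced by the connection. Indeed, the defining identity $\theta_{pq}(\str(p)\cap\G')=\str(q)\cap\G'$ determines the edges of $\G'$ at each neighbour $q$ of $p$ in $\G'$ to be $\theta_{pq}(\{e_1,\dots,e_r\})$, and iterating along paths inside the connected graph $\G'$ determines $\str(q')\cap\G'$ at every vertex $q'$. Because $\G(X)$ is $j$-independent with $j\geqslant 3$, the connection $\theta$ is unique by Proposition~\ref{propCanonConnection}, so this propagation is canonical and two faces with the same edge set at $p$ have identical vertex and edge sets; they coincide. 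In particular the geometric face $\G(Y)$ constructed above is the only face with edge set $\{e_1,\dots,e_r\}$ at $p$, which simultaneously yields uniqueness and geometricity.

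The main obstacle is the existence step, and within it the precise identification of $\tau_pY$: one must verify that the component $Y$ of $X^{H}$ is genuinely an $r$-dimensional, complexity-zero face submanifold whose tangent weights at $p$ are exactly $\alpha_1,\dots,\alpha_r$, with no spurious weight slipping into their rational span. This is exactly where the bound $r\leqslant j-1$ (as opposed to $r\leqslant j$) is indispensable. Once this identification is secured, uniqueness is a purely formal consequence of the uniqueness of the connection and requires no further geometric input.
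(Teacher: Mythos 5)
The paper offers no internal proof of this lemma --- it simply defers to \cite[Prop.~5.7]{AyzCherep} --- so there is no in-text argument to compare against; your proof is a self-contained reconstruction along what is in fact the natural (and presumably the cited) route: existence via the invariant submanifold $Y\ni p$ of the subtorus $H=\bigl(\bigcap_{i}\ker\alpha_i\bigr)^{0}$, uniqueness via propagation by the connection, which is unique by Proposition~\ref{propCanonConnection}. Both main steps are sound: the computation $\tau_pY=(\tau_pX)^H=V(\alpha_1)\oplus\cdots\oplus V(\alpha_r)$ correctly isolates where $r\leqslant j-1$ (rather than $r\leqslant j$) is decisive, and the induction along paths in a connected totally geodesic subgraph does show that two faces with the same edge set at $p$ coincide.

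The one step that is not adequately justified is the assertion that $Y$ is a face submanifold ``by Remark~\ref{remEqFormalSkeleta}''. That remark only says that faces automatically contain fixed points and that $Q_r$ is the union of the rank-$r$ faces; it does not say that every invariant submanifold containing a fixed point is a face submanifold, which is what you actually use --- and this is not a tautology, since faces are defined as closures of connected components of $Q_r\setminus Q_{r-1}$, not as images of components of $X^H$. The gap is genuine but short to close. Let $U\subset Y$ be the union of the $r$-dimensional orbits; $U$ is open, dense and connected in $Y$, since its complement is a finite union of proper invariant submanifolds of even (hence $\geqslant 2$) codimension. Then $U/T$ is open in $Q_r\setminus Q_{r-1}$ (by the slice theorem, any nearby orbit of dimension exactly $r$ has stabilizer whose identity component equals $H$, hence lies in $Y$) and is also closed there (because $Y$ is closed in $X$); so $U/T$ is an entire connected component of $Q_r\setminus Q_{r-1}$, its closure $Y/T$ is a face, and the full preimage of this face is $Y$. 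Alternatively: $U/T$ lies in the interior of a single rank-$r$ face $F$, so $Y\subseteq X_F$, and one more application of $j$-independence at $p$ gives $\tau_pX_F=\tau_pY$, forcing $Y=X_F$ by equality of dimensions. With either patch (or with a citation of the statement in~\cite{AyzCherep} identifying invariant submanifolds containing fixed points with face submanifolds), your proof is complete.
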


The proof can be found e.g. in~\cite[Prop.5.7]{AyzCherep}.

\begin{defin}
An edge $e$ of $\G$ emanating from a vertex of a face $H$ is called \textit{transversal} to $H$ if $e$ is not an edge of $H$.
\end{defin}

Consider $n$-valent $j$-independent abstract GKM graph $\G$. If $j\geqslant 3$, then any two edges emanating from a common vertex determine a 2-face $\Xi$. Notice that combinatorially a $2$-face is a cycle graph, so we have \emph{the monodromy map}: the composition of the connection maps along the edges of the cycle. This monodromy acts on the transverse $n-2$ edges to any given vertex of $\Xi$.

\begin{lem} \label{lemm:1-1}
If a GKM graph is $j$-independent, and $j\geqslant 4$, then the monodromy map along any 2-face acts identically on transverse edges.
\end{lem}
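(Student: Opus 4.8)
The plan is to reduce the monodromy around the $2$-face $\Xi$ to a monodromy that takes place inside a suitable $3$-face, where the direction transverse to $\Xi$ becomes one-dimensional and the action is forced to be trivial purely for counting reasons. The decisive point is that the hypothesis $j\geqslant 4$ gives $j-1\geqslant 3$, which is exactly what is needed to apply Lemma~\ref{lemm:spans} to \emph{triples} of edges rather than just pairs.

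Fix a vertex $p_0$ of the cycle $\Xi$, let $e_1,e_2\in\str(p_0)$ be the two edges of $\Xi$ at $p_0$ (so that $e_1,e_2$ span $\Xi$), and let $\Theta\colon\str(p_0)\to\str(p_0)$ be the monodromy. It suffices to show $\Theta(f)=f$ for an arbitrary transverse edge $f\in\str(p_0)$. First I would invoke Lemma~\ref{lemm:spans}: since $j-1\geqslant 3$ and the weights of $e_1,e_2,f$ are linearly independent by $j$-independence, these three edges span a unique face $H$, which is a $3$-face. Because $H$ contains $e_1$ and $e_2$, and $\Xi$ is the unique (minimal) face spanned by $e_1,e_2$, we obtain $\Xi\subseteq H$, so the whole cycle $\Xi$ lies inside the $3$-face $H$.

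Next I would use that both $\Xi$ and $H$ are totally geodesic faces. By the definition of a totally geodesic face, for every edge $pq$ of $H$ the connection satisfies $\theta_{pq}(\str(p)\cap H)=\str(q)\cap H$; applying this along each edge of the cycle $\Xi\subseteq H$ shows that every connection map appearing in the composition defining $\Theta$ preserves $H$, whence $\Theta(\str(p_0)\cap H)=\str(p_0)\cap H$. The same argument applied to the face $\Xi$ shows that $\Theta$ maps the set $\{e_1,e_2\}$ of $\Xi$-edges at $p_0$ bijectively to itself, and therefore carries transverse edges to transverse edges. Now, since $H$ is $3$-valent, $\str(p_0)\cap H=\{e_1,e_2,f\}$, and the only edge in this set transverse to $\Xi$ is $f$ itself. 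Thus $\Theta(f)$, being simultaneously an element of $\str(p_0)\cap H$ and transverse to $\Xi$, must equal $f$. As $f$ was arbitrary, $\Theta$ acts identically on all $n-2$ transverse edges.

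The hard part is conceptual rather than computational: recognizing that the monodromy statement is really a statement about the $3$-face $H$ enveloping $\Xi$, and that $H$ is available precisely because $j\geqslant 4$. Once $H$ is produced by Lemma~\ref{lemm:spans}, the remaining verifications---that the connection maps respect $H$ and $\Xi$, and that $H$ contributes a single transverse direction at each vertex---are routine consequences of the definition of a totally geodesic face and of $3$-valence, so no delicate estimate is required.
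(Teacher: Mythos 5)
Your proof is correct and follows essentially the same route as the paper's: use $j\geqslant 4$ to span a $3$-face $H$ by the transverse edge together with the two edges of $\Xi$, note that translation along $\Xi$ stays inside $H$, and conclude by $3$-valence that the edge returns to itself. You merely spell out the steps the paper leaves implicit (that $\Xi\subseteq H$, and that $\Theta$ preserves both $\str(p_0)\cap H$ and transversality), so this is a faithful, slightly more detailed version of the same argument.
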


\begin{proof}
Take an arbitrary transversal edge $e$ to $\Xi$ at a vertex $p$. This transversal edge together with the two edges of $\Xi$ emanating from $p$ determines a 3-face since $j\geqslant 4$. Therefore, if we translate $e$ along $\Xi$, it comes back to $e$ (since it stays inside a 3-face). Since $e$ is arbitrary, this proves the lemma.
\end{proof}

The condition $j\geqslant 4$ in Lemma~\ref{lemm:1-1} cannot be weakened. The monodromy map $\mu_F$ is not necessarily trivial when $j=3$ as evidenced by the following examples.

\begin{ex}\label{exGrassmann}
Consider the natural torus action of $T^3$ on the complex Grassmann manifold $\Gr_{4,2}$ of $2$-planes in $\Co^4$. This is a complexity one action in general position. Its GKM graph is shown on Fig.~\ref{figMonod}, (a). This graph is embedded in $\Ro^3$ as a skeleton of an octahedron, and the values of the axial function correspond to the actual geometrical directions of edges in $\Ro^3$. A triangular face of an octahedron  corresponds to a face submanifold $\CP^2$ inside $\Gr_{4,2}$, it is a 2-face in the GKM sense. There are precisely two transversal edges to a face in each vertex. It can be seen that the monodromy along a triangular face transposes the transversal edges.
\end{ex}

\begin{ex}\label{exHP}
There is a canonical action of $T^3$ on the quaternionic projective plane $\HP^2$. This is a complexity one action in general position similar to the previous example. The detailed analysis of the faces of this action was done in~\cite{AyzHP}. The GKM graph is shown schematically on Fig.~\ref{figMonod}, (b), it has 3 vertices, each two connected with a pair of edges. Again, the monodromy along any triangular face permutes its transversal edges. It is also true that the monodromy along any biangle permutes the transversal edges.
\end{ex}

\begin{figure}[h]
\begin{center}
\includegraphics[scale=0.4]{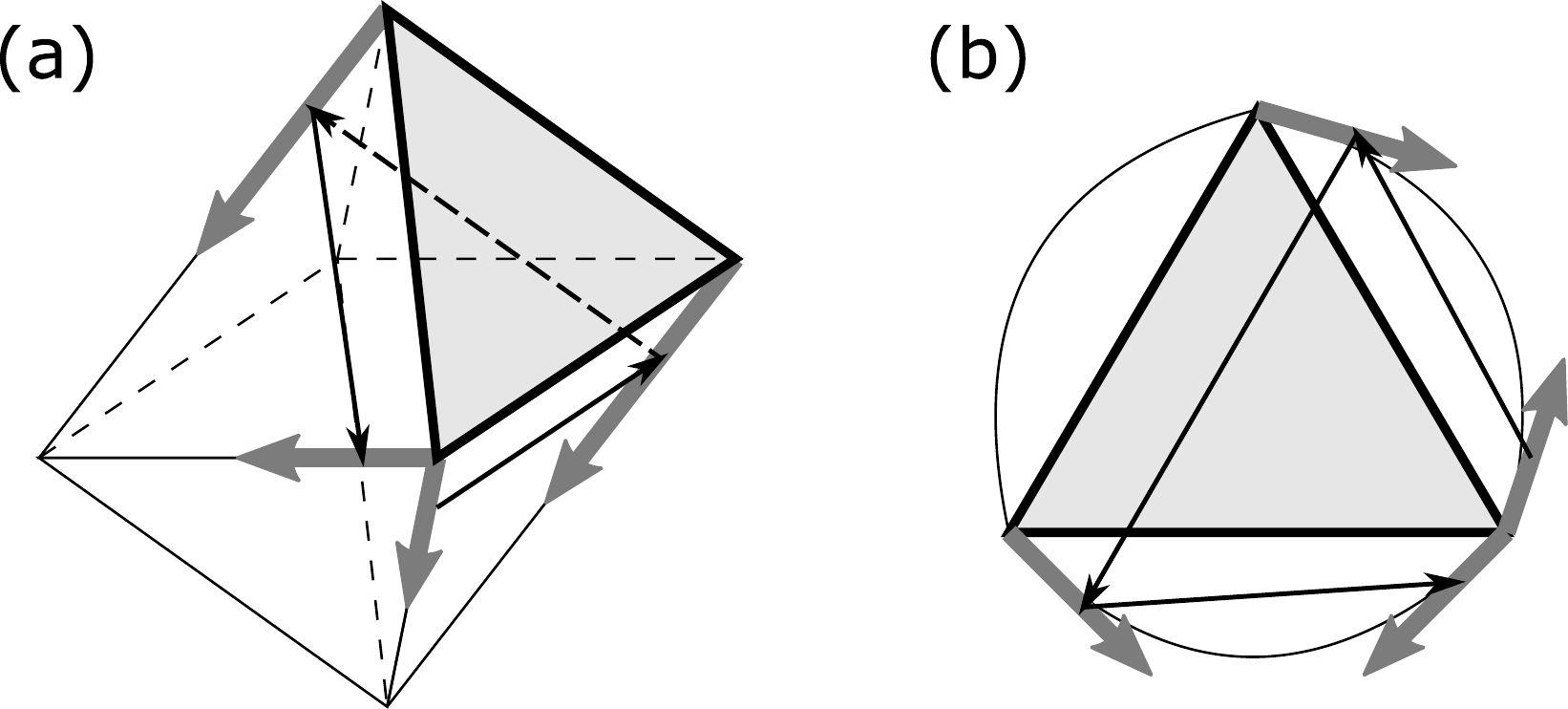}
\end{center}
\caption{The GKM graphs (a) $\G(\Gr_{4,2})$, the Grassmann manifold, (b) $\G(\HP^2)$, quaternionic projective plane. The monodromy along the gray 2-face is nontrivial in both cases.}\label{figMonod}
\end{figure}

Lemma~\ref{lemm:1-1} asserts that the monodromy is trivial on the transversal edges for highly independent actions. The monodromy along a 2-face can, however, be nonidentical on the edges of this face. The next statement is straightforward from the properties of a connection.

\begin{lem}\label{lemm:InnerMonod}
The monodromy along a 2-face is identical on the edges of this face if and only if the face is a cycle of even length.
\end{lem}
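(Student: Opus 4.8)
The plan is to track, step by step, how a single connection map acts on the two edges of $\Xi$ meeting at a vertex, and then to compose these maps around the cycle. Recall that a $2$-face $\Xi$ is a cycle graph; write it as $p_0,p_1,\dots,p_{\ell-1}$ with $p_\ell=p_0$, the consecutive vertices joined by edges $e_i=(p_ip_{i+1})$, so that exactly two edges of $\Xi$ meet each vertex $p_i$: the ``outgoing'' edge $f_i=(p_ip_{i+1})$ and the ``incoming'' edge $g_i=(p_ip_{i-1})$. Here $f_i$ and $g_{i+1}$ are the two orientations of a single geometric edge of $\Xi$. The monodromy based at $p_0$ is $\mu=\theta_{e_{\ell-1}}\circ\cdots\circ\theta_{e_1}\circ\theta_{e_0}$, a permutation of $\str(p_0)$ that preserves the two-element set $\str(p_0)\cap\Xi=\{f_0,g_0\}$.

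The key local computation is the action of $\theta_{e_i}=\theta_{f_i}\colon\str(p_i)\to\str(p_{i+1})$ on the two face-edges. Property (1) of a connection, $\theta_e e=e$ (the connection sends an edge to its own reverse), gives $\theta_{f_i}(f_i)=g_{i+1}$. Since $\Xi$ is totally geodesic, $\theta_{f_i}$ restricts to a bijection $\{f_i,g_i\}\to\{f_{i+1},g_{i+1}\}$, and as $f_i\mapsto g_{i+1}$ the only remaining possibility is $g_i\mapsto f_{i+1}$. Thus, if we orient the cycle and label the outgoing half-edge at each vertex by $+$ and the incoming one by $-$, every connection map $\theta_{f_i}$ reverses this sign.

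Composing $\ell$ such sign-reversals, the net effect of $\mu$ on $\{f_0,g_0\}$ is multiplication by $(-1)^\ell$: tracing $f_0$ around the cycle, it lands on $f_0$ when $\ell$ is even and on $g_0$ when $\ell$ is odd. Hence $\mu$ fixes both $f_0$ and $g_0$ precisely when $\ell$ is even, and transposes them precisely when $\ell$ is odd, which is exactly the assertion of the lemma.

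There is no serious obstacle here; the statement is combinatorial bookkeeping. The only point requiring care is the correct reading of the connection axioms: one must combine the edge-reversal convention in property (1) with the totally geodesic condition to pin down the image of the incoming face-edge at each vertex, and one must keep the cycle orientation consistent (via the identification of $f_i$ with the reverse of $g_{i+1}$) so that the per-step sign reversal accumulates to $(-1)^\ell$.
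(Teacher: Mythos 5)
Your proof is correct, and it is precisely the argument the paper leaves implicit: the paper offers no written proof, remarking only that the lemma ``is straightforward from the properties of a connection,'' and your bookkeeping (each connection map $\theta_{e_i}$ sends the geometric edge $e_i$ to itself by property (1), hence swaps the outgoing/incoming labels within the totally geodesic face, so the monodromy acts on the two face-edges at the basepoint by the parity $(-1)^\ell$) is exactly that straightforward argument, spelled out carefully and with the edge-orientation conventions handled correctly.
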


Let us introduce several more combinatorial definitions.

\begin{defin}
A GKM graph $\G$ is called \emph{bipartite} if it is bipartite as an unlabelled graph. A graph $\G$ is called \emph{even} every 2-face of $\G$ is a cycle of even length.
\end{defin}

A graph is bipartite if and only if its vertices can be properly colored in 2 colors. Bipartiteness of a graph obviously implies that a graph is even. The converse is also true under the assumption that every closed path in $\G$ is a composition of 2-faces  as an element of $\pi_1(\G)$.

\begin{defin}
An $n$-valent GKM graph $\G$ is called \emph{balanced} if there exists a coloring of its edges in $n$ colors such that
\begin{enumerate}
  \item For any vertex $p$, the edges of $\str(p)$ are colored with all $n$ colors.
  \item Connection preserves the coloring.
\end{enumerate}
\end{defin}

If $\G$ is balanced, then every (totally geodesic) face of $G$ is balanced as well. Therefore the condition of being balanced implies evenness by Lemma~\ref{lemm:InnerMonod}. The aim of the remaining part of the section is to prove the converse statement for complexity one actions in general position.

\begin{prop}\label{propColorability}
Let $X=X^{2n}$ be a simply-connected GKM manifold with smooth effective action of $T=T^{n-1}$ of complexity one in general position, and let $n\geqslant 5$. If the GKM graph $\GX$ is even, then it is balanced.
\end{prop}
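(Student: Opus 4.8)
The plan is to build the balanced colouring by transporting an arbitrary colouring of one star around the graph along the connection, and to show the transport is well defined. Fix a vertex $p_0$ and colour the $n$ edges of $\str(p_0)$ bijectively by $\{1,\dots,n\}$. For a vertex $q$ and an edge-path $\gamma$ from $p_0$ to $q$, the composite of the connection bijections along $\gamma$ carries this colouring to a bijective colouring of $\str(q)$; because each $\theta_e$ fixes $e$ and $\theta_{(qp)}=\theta_{(pq)}^{-1}$, the assignment is consistent on shared edges and invariant under backtracking, so it descends to a holonomy homomorphism $\mathrm{hol}\colon\pi_1(\GX,p_0)\to\mathrm{Sym}(\str(p_0))$. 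If $\mathrm{hol}$ is trivial the transported colouring is independent of $\gamma$, hence defines a global $n$-colouring of the edges that is proper (bijective on each star) and preserved by the connection; that is precisely the statement that $\GX$ is balanced. So everything reduces to $\mathrm{hol}=1$.

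First I would check that the holonomy around every $2$-face is trivial. Let $\Xi$ be a $2$-face based at a vertex $p$; its monodromy $\theta_\Xi$ is a permutation of $\str(p)$. Since $j=n-1\geq 4$, Lemma~\ref{lemm:1-1} shows $\theta_\Xi$ fixes each of the $n-2$ edges transverse to $\Xi$, while evenness of $\G$ together with Lemma~\ref{lemm:InnerMonod} shows $\theta_\Xi$ fixes the two edges of $\Xi$ at $p$ as well. Hence $\theta_\Xi=\mathrm{id}$, so every $2$-face loop lies in $\ker(\mathrm{hol})$, and so does every conjugate $\theta_\alpha\theta_\Xi\theta_\alpha^{-1}$. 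Consequently $\ker(\mathrm{hol})$ contains the normal subgroup of $\pi_1(\GX)$ generated by the $2$-faces.

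It therefore suffices to prove that the $2$-faces normally generate $\pi_1(\GX)$, i.e. that the $2$-complex $Q_2=\GX\cup\{\text{$2$-faces}\}$ is simply connected. (Here I use that a rank-$1$ face is an interval and a rank-$2$ face, being the complexity-zero orbit space of a $4$-manifold, is a $2$-disc glued along a cycle of $\GX$, so $Q_2$ genuinely is this $2$-complex; $2$-faces are geometric by Lemma~\ref{lemm:spans}.) This is where $\pi_1(X)=1$ and $n\geq 5$ enter. I would first transfer simple connectivity of $X$ to the orbit space: by $(n-1)$-independence the non-free locus $X_{n-2}$ has real codimension $\geq 4$, so $\pi_1(X^{\mathrm{free}})=\pi_1(X)=1$; the principal $T^{n-1}$-bundle $X^{\mathrm{free}}\to Q^{\mathrm{free}}$ then forces $\pi_1(Q^{\mathrm{free}})=1$, and since $Q_{n-2}$ has codimension $\geq 3$ in $Q$ this yields $\pi_1(Q)=1$.

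Finally I would compare $Q_2$ with $Q$. Every face of rank $\geq 3$ has dimension $\geq 3$; the proper ones have complexity $0$ by Lemma~\ref{lemTechJgeneral} and are acyclic by Proposition~\ref{propMasPan}, and the remaining complexity-one top stratum is attached along the codimension-$\geq 3$ singular set. I expect to package this as a relative cell structure on $(Q,Q_2)$ with cells only in dimensions $\geq 3$, so that the inclusion gives $\pi_1(Q_2)\cong\pi_1(Q)=1$; the homological input that $Q_2$ is $1$-acyclic while $Q$ is $n$-acyclic (Proposition~\ref{propAcyclicityMain}) should keep the relative homotopy groups of the pair under control. The hard part will be exactly this last comparison: lower skeleta can a priori carry \emph{larger} $\pi_1$ than the total space, so I must genuinely show that no $2$-cells are needed to build $Q$ from $Q_2$, i.e. that $(Q,Q_2)$ is $2$-connected, and the delicate point is handling the complexity-one free stratum (and justifying simple connectivity of the higher complexity-zero faces) rather than the combinatorics of the $2$-faces themselves.
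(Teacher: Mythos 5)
Your holonomy construction and the first two reductions are exactly the paper's proof: triviality of the monodromy around each $2$-face (Lemma~\ref{lemm:1-1} for the transverse edges, Lemma~\ref{lemm:InnerMonod} plus evenness for the edges of the face itself), and the reduction of balancedness to the claim that the boundaries of the $2$-faces normally generate $\pi_1(\GX)$, i.e.\ that $\pi_1(Q_2)=1$, where $Q_2$ is the honest $2$-complex formed by the vertices, edges and (geometric, disc-shaped) $2$-faces. The divergence is in how $\pi_1(Q_2)=1$ is obtained, and there your proposal has a genuine gap --- one you flag yourself. You want to deduce it from $\pi_1(Q)=1$ together with $2$-connectivity of the pair $(Q,Q_2)$, but that $2$-connectivity is never proved, and it does not follow from the homological facts you cite: the faces of rank $\geqslant 3$ are only \emph{homology} cells (Proposition~\ref{propMasPan} and Proposition~\ref{propAcyclicityMain}(1)), and attaching a homology cell $F$ along $F_{-1}$ can kill elements of $\pi_1$ (think of a Mazur-type contractible manifold attached along a homology sphere with large fundamental group). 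So $\pi_1(Q_2)\to\pi_1(Q)$ could a priori fail to be injective, and $\pi_1(Q)=1$ alone gives no control over $\pi_1(Q_2)$. As written, the proposal is therefore not a complete proof.

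There is also a factual error in your intermediate step: the non-free locus is not $X_{n-2}$ and need not have codimension $\geqslant 4$. The set $X_{n-2}$ consists of points with positive-dimensional stabilizer; points with nontrivial \emph{finite} stabilizer lie outside it, and these do occur in codimension $2$ for actions in general position: if the weights $\alpha_{p,1},\dots,\alpha_{p,n-1}$ at a fixed point are linearly independent but do not form a basis of $\Hom(T,S^1)\cong\Zo^{n-1}$, then $\Gamma=\bigcap_{i\leqslant n-1}\ker\alpha_{p,i}$ is a nontrivial finite group whose fixed point set there is $(2n-2)$-dimensional. Hence the equality $\pi_1(X^{\mathrm{free}})=\pi_1(X)$ is unjustified; the conclusion $\pi_1(Q)=1$ is still true, but it should be obtained from the standard fact that the orbit map of a compact \emph{connected} Lie group action induces a surjection on fundamental groups. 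For comparison, the paper never passes through the pair $(Q,Q_2)$: it argues on $Q_2$ itself, using that $Q_2$ is a genuine $2$-dimensional cell complex (homology cells of dimension $\leqslant 2$ are actual cells) which is $1$-acyclic by Proposition~\ref{propAcyclicityMain}(4), and combines this with the simple connectivity hypothesis to conclude $\pi_1(Q_2)=1$. That step of the paper is itself quite brief, and the difficulty you isolated --- that a low skeleton can a priori carry a larger fundamental group than the ambient space --- is precisely the delicate point; but your proposal, which makes this difficulty explicit, leaves it unresolved.
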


%
%

\begin{proof}
The graph $\GX$ is an $(n-1)$-independent GKM graph of dimension $n$ and rank $n-1$. Since $n\geqslant 5$, we are in position to apply Lemma~\ref{lemm:1-1}: an edge $e$ is preserved by the monodromy along any $2$-face which is transversal to $e$. On the other hand, since the graph is even, Lemma~\ref{lemm:InnerMonod} applies as well, so that the monodromy along 2-face preserves $e$ as well if it lies in the face. Therefore the monodromy is trivial along all closed paths from the subgroup of $\pi_1(\GX)$ generated by 2-faces of $\GX$.

Let us prove that 2-faces generate $\pi_1(\GX)$. Consider the orbit space $Q$ of $X$ and the equivariant skeleta $Q_0\subset Q_1\subset Q_2$. Here $Q_1$ is isomorphic to $\GX$ as a graph. $Q_2$ is a homology cell complex according to Proposition~\ref{propAcyclicityMain}. However, in dimensions 1 and 2 every homology cell is an actual cell. The independency assumption on the action implies that $Q_2$ is 1-acyclic. Combining this with simply-connectedness gives $\pi_1(Q_2)=1$. Therefore all closed paths in $Q_1=\GX$ are generated by the boundaries of 2-faces.

This argument shows that monodromy acts trivially. We can color $\str(p)$ in $n$ colors arbitrarily, and then use the monodromy to  transfer it consistently to all other vertices. This procedure determines a proper coloring.
\end{proof}

\begin{defin}
A GKM graph $\G$ is called \emph{a graph with facets}, if, for any vertex $p$ and any edge $e\in\str(p)$ there exists a facet of $\G$ spanned by the edges $\str(p)\setminus\{e\}$.
\end{defin}

\begin{lem}\label{lemColorImpliesFacets}
If $\G$ is balanced, then $\G$ is a graph with facets.
\end{lem}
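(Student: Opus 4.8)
The statement asserts that balancedness of a GKM graph $\G$ implies it is a graph with facets; that is, for every vertex $p$ and every edge $e\in\str(p)$, the remaining $n-1$ edges $\str(p)\setminus\{e\}$ span a facet. The plan is to construct this facet explicitly as the connected subgraph obtained by propagating the color class determined by $e$ across the whole graph, using the connection, and then to verify that this subgraph satisfies the totally geodesic condition from the definition of a face.

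\textbf{Key steps.} First I would use the balanced coloring $c\colon E\to\{1,\ldots,n\}$. Fix a vertex $p$ and an edge $e\in\str(p)$; let $i=c(e)$ be its color. The facet we want is the one whose edges are $\str(p)\setminus\{e\}$, i.e. the edges at $p$ of colors different from $i$. The natural candidate for the facet $H$ is the subgraph $H=(V_H,E_H)$ where $E_H=\{f\in E\mid c(f)\neq i\}$ and $V_H$ is the set of vertices incident to such edges (which, since every vertex meets all $n$ colors, is all of $V$), restricted to the connected component containing $p$. The second step is to check that $H$ is $(n-1)$-valent at each of its vertices: at any vertex $q\in H$, the edges of color $\neq i$ are exactly $n-1$ in number by property (1) of balancedness, so every vertex of $H$ has valence $n-1$ inside $H$. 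The third step is to verify the totally geodesic condition: for any edge $qq'$ of $H$ we must show $\theta_{qq'}(\str(q)\cap H)=\str(q')\cap H$. Here I would invoke property (2) of balancedness --- the connection preserves the coloring --- so $\theta_{qq'}$ maps an edge of color $\ell$ at $q$ to an edge of color $\ell$ at $q'$; since $\str(q)\cap H$ is precisely the set of edges at $q$ of color $\neq i$, the connection sends it bijectively onto the edges at $q'$ of color $\neq i$, which is exactly $\str(q')\cap H$. This establishes that $H$ is a totally geodesic face. Finally, $H$ is $(n-1)$-valent in an $n$-valent graph, hence a facet, and it is spanned by $\str(p)\setminus\{e\}$ by construction.

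\textbf{Main obstacle.} The subtle point is the connectedness requirement built into the definition of a face, together with identifying $H$ as a genuine GKM subgraph of the correct rank. I would pass to the connected component of the color-$\neq i$ subgraph containing $p$ and argue that it is still $(n-1)$-valent (each vertex it reaches carries all $n$ colors, hence exactly $n-1$ admissible edges), so the valence count is automatic once we know the component is closed under following color-$\neq i$ edges --- which it is, since the connection preserves colors and thus never forces us to leave the color class when translating along edges of $H$. One must also check that the axial function restricted to $E_H$ turns $H$ into a GKM graph of rank $n-1$: this follows because the tangent directions along a totally geodesic face span a sublattice of corank one, a standard consequence of the connection condition (property (3) of a connection), so no separate argument beyond the totally geodesic property is needed. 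Thus the bulk of the proof is the color-preservation bookkeeping, and the only real care is in handling the connectedness clause of the face definition.
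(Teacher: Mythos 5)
Your proof is correct and takes essentially the same approach as the paper: the paper's one-line proof simply declares the facets to be the connected components of the subgraphs $\G_i=c^{-1}([n]\setminus\{i\})$, which is exactly the color-class construction you carry out, with the valence count and the color-preservation/totally geodesic verification (which the paper leaves implicit) spelled out. Your closing remark about the facet having rank $n-1$ is not actually needed for the definition of a facet (codimension one refers to valence, not rank), but it does no harm.
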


\begin{proof}
If $\G$ has dimension $n$, with the coloring $c\colon E_\G\to[n]=\{1,\ldots,n\}$, then the facets are the connected components of subgraphs $\G_i=c^{-1}([n]\setminus\{i\})$ for $i\in[n]$.
\end{proof}

Examples~\ref{exGrassmann} and~\ref{exHP} show that GKM graphs $\G(\Gr_{4,2})$ and $\G(\HP^2)$ do not have facets. On the contrast, combining Proposition~\ref{propColorability} and Lemma~\ref{lemColorImpliesFacets} we get the following

\begin{cor}\label{corFacets}
Under the assumptions of Theorem~\ref{thmMainIntroFaceRing} the GKM graph $\GX$ is a graph with facets.
\end{cor}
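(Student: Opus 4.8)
The plan is to obtain this corollary as a direct consequence of the two preceding results, Proposition~\ref{propColorability} and Lemma~\ref{lemColorImpliesFacets}; the entire task reduces to checking that the hypotheses of Theorem~\ref{thmMainIntroFaceRing} feed correctly into those of Proposition~\ref{propColorability}. First I would confirm that the action is genuinely a GKM action of complexity one in general position, so that all the combinatorial apparatus of this section applies. By hypothesis the action of $T^{n-1}$ on $X^{2n}$ is equivariantly formal, has finite nonempty fixed set, and is in general position, i.e. $(n-1)$-independent. Since $n\geqslant 5$, it is in particular $2$-independent, so $X$ is a GKM manifold in the sense of Definition~\ref{defGKMmfd}, of complexity $\com X=n-(n-1)=1$, and $\GX$ is an $(n-1)$-independent GKM graph of dimension $n$ and rank $n-1$; being $3$-independent it carries its canonical connection by Proposition~\ref{propCanonConnection}, so the notions of $2$-face, monodromy, balancedness and facet are all well defined for $\GX$.

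Next I would translate the bipartiteness assumption into evenness. As observed just before Proposition~\ref{propColorability}, a bipartite graph is automatically even: a proper $2$-coloring of the vertices forces every cycle---and in particular every $2$-face, which combinatorially is a cycle graph---to have even length. Thus the hypothesis that $\GX$ is bipartite yields that $\GX$ is even.

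With $X$ simply connected ($\pi_1(X)=1$), of complexity one in general position, $n\geqslant 5$, and $\GX$ even, every hypothesis of Proposition~\ref{propColorability} is now satisfied, so that proposition lets me conclude that $\GX$ is balanced. Finally, Lemma~\ref{lemColorImpliesFacets} converts balancedness into the existence of facets: fixing a coloring $c\colon E_{\GX}\to[n]$, the connected components of the subgraphs $\G_i=c^{-1}([n]\setminus\{i\})$ provide, for each vertex $p$ and each edge $e\in\str(p)$ with color $c(e)=i$, a facet spanned by $\str(p)\setminus\{e\}$. Hence $\GX$ is a graph with facets, as claimed. There is no serious obstacle in this argument; the only point demanding a moment's care is the bookkeeping that the independence numbers line up---namely that $(n-1)$-independence delivers the $2$- and $3$-independence needed for Definition~\ref{defGKMmfd} and Proposition~\ref{propCanonConnection} once $n\geqslant 5$---so that Propositions~\ref{propColorability} and~\ref{propCanonConnection} genuinely apply.
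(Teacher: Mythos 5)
Your proof is correct and takes essentially the same route as the paper, which obtains the corollary precisely by combining Proposition~\ref{propColorability} (evenness, supplied here by bipartiteness, implies balancedness) with Lemma~\ref{lemColorImpliesFacets} (balancedness implies facets). The hypothesis bookkeeping you add --- that $(n-1)$-independence with $n\geqslant 5$ yields the $2$- and $3$-independence needed for the GKM structure and the canonical connection --- is sound and consistent with the paper's setup.
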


It follows easily that in the described case all faces of $\GX$ are either geometrical (faces of dimension $\leqslant n-2$ and the whole graph), or the facets provided by Corollary~\ref{corFacets}. In this case the poset $S(\GX)$ is dually simplicial poset, in the sense that every upper order ideal $S(\GX)_{\geqslant s}$ is a boolean lattice. Let $S(\GX)^*$ denote the poset with the reversed order; this is a simplicial poset.

\begin{rem}
We expect that much weaker assumptions are required to guarantee that a GKM graph $\GX$ of an action of $T^{n-1}$ on $X^{2n}$ in general position has facets. In the first version of the paper we stated this fact for $n\geqslant 5$ without the requirement for the graph to be bipartite, but we found a hole in the original proof which we were unable to fix. However, we don't know any counterexamples to this general statement.
\end{rem}

\section{Cohomology and face rings}\label{secFaceRings}

Let us recall the basic theorem used to describe equivariant cohomology ring of a GKM manifold.

\begin{thm}[Theorem of Goresky, Kottwitz, and MacPherson]\label{thmGKM}
Let $X$ be a GKM manifold and $\GX$ its GKM graph with the vertex set $V=X^T$, the edge set $E$ and the axial function $\alpha$.
Consider the $H^*(BT)$-algebra
\[
H_T^*(\GX)\cong \{\phi\colon V\to H^*(BT)\mid
\phi(p)\equiv\phi(q)\mod (\alpha(pq))\,\,\forall pq\in E\},
\]
where the value $\alpha(pq)$ of the axial function is considered as an element of $H^2(BT)$. Then there is a canonical isomorphism of graded $H^*(BT)$-algebras
\[
H_T^*(X)\cong H_T^*(\GX).
\]
\end{thm}

GKM theorem provides an explicit description of $H^*(BT)$ as a subring of the direct sum $\bigoplus_{p\in X^T}H^*(BT)$. An additional work is required if one needs an expression for $H^*(BT)$ in terms of generators and relations. The classical cases are smooth toric varieties and quasitoric manifolds: it is possible to describe their equivariant cohomology rings as Stanley--Reisner algebras. More generally, equivariant cohomology rings of equivariantly formal torus manifolds were described as the face rings of their face posets in~\cite{MasPan}. Here we adopt some ideas of that work for the case of the actions of complexity one in general position under the assumption that their GKM graphs have facets.

In the following, cohomology rings are taken with coefficients in $R=\Qo$. In this section we study a complexity one action of $T=T^{n-1}$ on $X=X^{2n}$ in general position, and assume that $\GX$ has facets. Therefore $S(\GX)^*$ is a simplicial poset as explained in the previous section. For a simplicial poset, there is a well-known notion of the face ring.

\begin{defin}\label{definFaceRing}
Consider \emph{the face ring} $\ZQ[\GX]$ of the simplicial poset $S(\GX)^*$, that is the quotient ring
\[
\ZQ[\GX]=\ZQ[v_F\mid F \text{ a face of }\GX]/\ca{I},
\]
where the ideal $\ca{I}$ is generated by relations
\[
v_Fv_H-v_{F\vee H}\sum\nolimits_{E\subset F\cap H}v_E,\text{ and } v_{\GX}=1,
\]
where $E$ runs over all connected components of the intersection $F\cap H$, and $F\vee H$ denotes the least face of $\GX$ which contains both $F$ and $H$. The ring $\ZQ[\GX]$ is a graded commutative ring with the grading $\deg v_F=2\codim F=2(n-\dim F)$.
\end{defin}

Notice that the element $F\vee H$ is well-defined and unique if $F\cap H\neq \varnothing$. Otherwise, if $F\cap H=\varnothing$, the sum over the empty set of indices is assumed zero, so there is no need to define a unique element $F\vee H$. Elements of $\ZQ[\GX]$ of degree $2$ are called \emph{linear}. The component $\ZQ[\GX]_2$ is generated by $v_F$'s where $F$ is a facet.

\begin{thm}\label{thmFacetsRing}
Let $\GX$ be a as described above. Then there exists a nonzero linear form $\eta\in\ZQ[\GX]_2$ such that $H^*_T(X)$ is isomorphic to $\ZQ[\GX]/(\eta)$.
\end{thm}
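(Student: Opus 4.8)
The plan is to adapt the strategy of Masuda--Panov~\cite{MasPan} to the GKM setting, replacing the geometry of torus manifolds by the description of Theorem~\ref{thmGKM}. Throughout I would use the balanced coloring $c\colon E\to[n]$ produced by Proposition~\ref{propColorability}, writing $C_F\subset[n]$ for the common color set of the edges of a face $F$ (well defined because the connection preserves $c$). First I would construct a homomorphism of $H^*(BT)$-algebras $\Phi\colon\ZQ[\GX]\to H^*_T(\GX)\cong H^*_T(X)$ sending each generator $v_F$ to the \emph{face class} $\tau_F$ given by
\[
\tau_F(p)=\prod_{e\in\str(p),\,c(e)\notin C_F}\alpha(e)\ \text{ if }p\text{ is a vertex of }F,\qquad \tau_F(p)=0\text{ otherwise.}
\]
At a vertex $p\in F$ this is a product of the $\codim F$ transverse weights, so $\deg\tau_F=2\codim F=\deg v_F$, and using that the connection preserves $c$ one checks $\tau_F$ satisfies the GKM congruences, hence $\tau_F\in H^*_T(\GX)$. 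The key computation is that $\Phi$ respects the relations of Definition~\ref{definFaceRing}: evaluating $\tau_F\tau_H$ and $\tau_{F\vee H}\sum_E\tau_E$ at a vertex $p$ and comparing the multisets of colors in the two products reduces the relation to the identity $([n]\setminus C_F)\uplus([n]\setminus C_H)=([n]\setminus(C_F\cup C_H))\uplus([n]\setminus(C_F\cap C_H))$, which holds because $C_{F\vee H}=C_F\cup C_H$ and every component $E$ of $F\cap H$ has $C_E=C_F\cap C_H$; when $F\cap H=\varnothing$ both sides vanish.

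Next I would produce the linear form $\eta$. For each color $i\in[n]$ set $\sigma_i=\sum v_F$, summing over facets $F$ with $i\notin C_F$; then $\Phi(\sigma_i)$ is the degree-two class $p\mapsto\beta_i(p)$, where $\beta_i(p)$ is the weight of the color-$i$ edge at $p$. At every fixed point the $n$ weights $\beta_1(p),\dots,\beta_n(p)$ lie in the $(n-1)$-dimensional space $H^2(BT)$ and, by general position, span it and satisfy a unique linear relation. The crux is that these pointwise relations can be chosen with globally constant coefficients $c_1,\dots,c_n$, not all zero: the connection relates the $\beta_i$ at adjacent vertices by integer multiples of the shared weight, and bipartiteness trivializes the sign ambiguity $\alpha(pq)=\pm\alpha(qp)$ consistently, so that $\sum_i c_i\beta_i(p)=0$ for all $p$. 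Setting $\eta=\sum_i c_i\sigma_i$ gives a nonzero $\eta\in\ZQ[\GX]_2$ with $\Phi(\eta)=0$, so $\Phi$ descends to $\bar\Phi\colon\ZQ[\GX]/(\eta)\to H^*_T(X)$.

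Finally I would show $\bar\Phi$ is an isomorphism by a rank count over $H^*(BT)$. The simplicial poset $S(\GX)^*$ is Cohen--Macaulay, its links being acyclic in the required range by Theorem~\ref{thmMainIntroAcycl}, so $\ZQ[\GX]$ is a Cohen--Macaulay ring of Krull dimension $n$. Choosing $\theta_\lambda=\sum_i a_i\sigma_i$ mapping to the constant class $\lambda$ for $\lambda$ in a basis of $H^2(BT)$, the $n$ forms $\{\theta_\lambda\}\cup\{\eta\}$ form a linear system of parameters; hence $\ZQ[\GX]$ is free over the polynomial subalgebra they generate, and $\ZQ[\GX]/(\eta)$ is free over $H^*(BT)$ of rank equal to the number of maximal simplices of $S(\GX)^*$, i.e. the number of vertices of $\GX$. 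By equivariant formality (Lemma~\ref{lemEquivFormFixedPoints}) $H^*_T(X)$ is free over $H^*(BT)$ of rank $\dim_\Qo H^*(X)=|X^T|$, the same number. It then suffices to prove $\bar\Phi$ surjective, which reduces to showing that the face classes $\tau_F$ generate $H^*_T(\GX)$ over $H^*(BT)$, by induction over the skeleta $S(\GX)_r$ using the acyclicity of Theorem~\ref{thmMainIntroAcycl}; a surjection of free $H^*(BT)$-modules of equal finite rank is then an isomorphism.

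I expect the main obstacle to be the combination of the global consistency of $\eta$ with the surjectivity of $\Phi$. The former is exactly where bipartiteness is indispensable --- it trivializes the sign ambiguity of the axial function along cycles, using the $\pi_1$-triviality established in the proof of Proposition~\ref{propColorability} --- while the latter is where the Cohen--Macaulayness of $\ZQ[\GX]$, and hence the acyclicity Theorem~\ref{thmMainIntroAcycl}, does the decisive work.
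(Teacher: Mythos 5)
Your first step (the face classes $\tau_F$, the homomorphism $\Phi$, and the verification of the face-ring relations by localization) agrees with the paper's construction and is sound. The fatal problem is your construction of $\eta$. You posit $\eta=\sum_i c_i\sigma_i$, where $\sigma_i$ sums $v_F$ over \emph{all} facets of color class $i$ with a \emph{single} coefficient $c_i$, and you claim that bipartiteness makes the pointwise relations $\sum_i c_i\beta_i(p)=0$ hold with globally constant $c_i$. This is false. Take $X=(S^2)^n$, $n\geqslant 5$, with the coordinatewise $T^n$-action restricted to the subtorus $T^{n-1}=\ker\bigl(\sum_i a_ix_i\bigr)$, all $a_i\neq 0$: this is equivariantly formal, simply connected, in general position, and its GKM graph is the bipartite $n$-cube with $\beta_i(\epsilon)=\epsilon_i\,\pi(e_i)$ at the vertex $\epsilon\in\{\pm 1\}^n$, where $\pi\colon\Zo^n\to\Hom(T^{n-1},S^1)$ is the restriction of characters and $\sum_i a_i\pi(e_i)=0$ is the unique relation. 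Comparing the vertices $(+,\dots,+)$ and $(-,+,\dots,+)$ forces $(c_i)\propto(a_i)$ and $(-c_1,c_2,\dots,c_n)\propto(a_1,\dots,a_n)$ simultaneously, hence $c=0$: no nonzero constant-coefficient combination of your $\sigma_i$ lies in the kernel. The actual kernel element is $\eta=\sum_i a_i\bigl(v_{F_i^+}-v_{F_i^-}\bigr)$, where $F_i^{\pm}$ are the \emph{two} facets of color class $i$; its coefficients are constant along each facet but differ between facets of the same color, so it is not in the span of your $\sigma_i$. The underlying error is the propagation claim: across an edge $pq$ of color $j$ the connection gives $\beta_i(q)=\beta_i(p)+m_i\beta_j(p)$ for $i\neq j$ and $\beta_j(q)=\pm\beta_j(p)$, so the unique pointwise relation transports with $c_i(q)=c_i(p)$ for $i\neq j$ but $c_j(q)=\pm\bigl(c_j(p)-\sum_{i\neq j}m_ic_i(p)\bigr)$; bipartiteness does nothing to remove this twist in the $j$-th coordinate (in the cube it is exactly the sign flip $c_j\mapsto -c_j$). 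The paper never attempts such a propagation: existence of a linear $\eta\in\ker\varphi$ is extracted algebraically from the commutative diagram comparing $\ZQ[\GX]$ and $H^*_T(X)$ with their localizations $\bigoplus_p\ZQ[\GX]/(v_H\mid p\notin H)$ and $\bigoplus_p H^*(BT)$ (both vertical maps injective, $\psi$ surjective but not injective already in degree two), and only afterwards are the coefficients identified pointwise, general position giving uniqueness up to scalar and nonvanishing of every coefficient.

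Your closing step also has a gap: you invoke Cohen--Macaulayness of $\ZQ[\GX]$ as an \emph{input}, to get freeness over a linear system of parameters and then a rank count. Theorem~\ref{thmMainIntroAcycl} concerns the skeleta $S(X)_r$ and lower ideals $S(X)_{<s}$ of the geometric face poset; it does not give acyclicity of the links of the reversed poset $S(\GX)^*$, whose top simplices involve the facets --- and those are not geometric faces, so none of the acyclicity results of the paper apply to them directly. In the paper, Cohen--Macaulayness (indeed Gorensteinness) of $\ZQ[\GX]$ is a \emph{consequence} of the theorem, derived in Remark~\ref{remGorenstein} from equivariant formality after the isomorphism is established; using it beforehand is circular unless you supply an independent proof, which would be no easier than the theorem itself. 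The paper closes the argument differently and more cheaply: after quotienting the localization diagram by $\eta$, each summand $\ZQ[\GX]/(\eta)/(v_H\mid p\notin H)\cong\ZQ[\tau_i\mid i\in I(p)]/\bigl(\sum_{i\in I(p)}c_i\tau_i\bigr)$ is isomorphic to $H^*(BT)$ precisely because $|I(p)|=n$ and all $c_i\neq 0$, so the bottom map becomes an isomorphism, the left map stays injective, and commutativity forces injectivity of the induced map $\ZQ[\GX]/(\eta)\to H^*_T(X)$, with no rank count needed.
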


The proof in many aspects follows the lines of the similar result about actions of complexity zero given in~\cite{MasPan}. The essential tool of the proof is the notion of the Thom class of a face of GKM graph, which is recalled below. A technical remark is needed for the definition. We impose an omniorientation on a GKM manifold (which means that we orient all its faces), and also an omniorientation on GKM graphs. The latter means that all edges of $\G$ are assumed directed, and the values of the axial function are sensitive to the change of direction, that is $\alpha(pq)=-\alpha(qp)$.

\begin{con}
Let $F$ be a (totally geodesic) face of an omnioriented GKM graph $\G$ with connection on the vertex set $V$. Consider the following element of $\bigoplus_{p\in V}H^*(BT)$ called \emph{the Thom class} of $F$:
\begin{equation}\label{eqThomClass}
\tau_F\colon V\to H^*(BT),\quad \tau_F(p)=\begin{cases}
                                              \prod_{pq\perp F}\alpha(pq), & \mbox{if } p\in F \\
                                              0, & \mbox{otherwise}.
                                            \end{cases}
\end{equation}
The value on a vertex $p$ of $F$ is the product of weights at $p$ transversal to the face~$F$. The element $\tau_F$ is homogeneous of degree $2(n-\dim F)$, twice the codimension of $F$. From the property of the connection on $\G$ it easily follows that $\tau_F$ belongs to the submodule $H^*(\G)\subset \bigoplus_{p\in V}H^*(BT)$.

From this general construction and Theorem~\ref{thmGKM} it follows that whenever $X$ is a GKM manifold, and $F$ is a face of its GKM graph, its Thom class $\tau_F$ is a well-defined element of $H^{2(n-\dim F)}_T(X)$.

Thom class makes sense even if $F$ is not geometric. However, if $F$ is a GKM graph of a face submanifold $Y\subset X$, then $\tau_F$ is the equivariant Poincar\'{e} dual of the submanifold $Y\subset X$. In other words, $\tau_F$ is the image of the identity element under the equivariant Gysin homomorphism $H^0_T(Y)\to H^{2n-2\dim F}(X)$. This fact is easily proved since localizing the element $\tau_F$ to a fixed point $p\in X^T=V$ gives either the Euler class of the normal space to $Y\subset X$ at $p$ (if $p\in Y$), or vanishes (if $p\notin Y$).
\end{con}

Returning back to complexity one actions in general position with facets, one can notice that the elements $\tau_F$ for all faces $F$ of $\GX$ satisfy similar polynomial relations as those in the definition~\ref{definFaceRing} of the face ring.

\begin{lem}
For a complexity one action in general position with facets, the Thom classes of the faces of $\GX$ satisfy the relations
\[
\tau_F \tau_H-\tau_{F\vee H}\sum\nolimits_{E\subset F\cap H}\tau_E,\text{ and } \tau_{\GX}=1.
\]
\end{lem}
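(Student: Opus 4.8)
The plan is to verify the two stated relations directly by evaluating both sides on an arbitrary fixed point $p\in V=X^T$, using the localization description~\eqref{eqThomClass} of the Thom classes. Since both $\tau_F\tau_H-\tau_{F\vee H}\sum_{E\subset F\cap H}\tau_E$ and $\tau_{\GX}-1$ live in $H^*(\GX)\subset\bigoplus_{p\in V}H^*(BT)$, it suffices to show that each vanishes pointwise. The relation $\tau_{\GX}=1$ is immediate: $\GX$ is the whole graph, so there are no transverse edges at any vertex, and the empty product in~\eqref{eqThomClass} is $1$ at every $p$. The substance is in the product relation, which I would split according to whether $p$ lies in the various faces involved.

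First I would treat the vertices $p$ where the left-hand term $\tau_F(p)\tau_H(p)$ is nonzero, namely $p\in F\cap H$. In this case $p$ lies in the connected component $E$ of $F\cap H$ containing it, and this $E$ is one of the summation indices; moreover $p\in F\vee H$. So on the right-hand side only the single summand $\tau_{F\vee H}(p)\tau_E(p)$ survives (the other components $E'$ do not contain $p$, so $\tau_{E'}(p)=0$). The remaining task is the purely local identity
\[
\Bigl(\prod_{pq\perp F}\alpha(pq)\Bigr)\Bigl(\prod_{pq\perp H}\alpha(pq)\Bigr)=\Bigl(\prod_{pq\perp F\vee H}\alpha(pq)\Bigr)\Bigl(\prod_{pq\perp E}\alpha(pq)\Bigr),
\]
which is a statement about the edges of $\str(p)$. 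Here I would use that $\G$ is a graph with facets and $(n-1)$-independent, so the edge set $\str(p)$ splits cleanly: the edges transverse to $F\vee H$ are exactly those transverse to both $F$ and $H$, while the edges transverse to $E$ (equivalently to $F\cap H$ near $p$) are exactly those tangent to $F\vee H$ but transverse to $F$ or to $H$. A careful bookkeeping of which edges are tangent or transverse to $F$, $H$, $F\vee H$, $E$ at $p$ should show both sides are the same product, counting each relevant weight once. The colorability provided by Corollary~\ref{corFacets} is what guarantees this partition of $\str(p)$ is well-defined and that $F\vee H$ is spanned precisely by the union of the edge-colors of $F$ and $H$.

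Next I would handle the vertices $p$ with $p\notin F\cap H$, where $\tau_F(p)\tau_H(p)=0$; here I must check the right-hand side also vanishes at $p$. If $F\cap H=\varnothing$ there is nothing to sum and the right-hand side is zero by convention. Otherwise, for $p\notin F\cap H$ one has $p\notin E$ for every component $E$, whence $\tau_E(p)=0$ and the whole sum vanishes at $p$. Thus the product relation holds identically in $\bigoplus_{p\in V}H^*(BT)$, hence in $H^*_T(X)\cong H^*_T(\GX)$.

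The main obstacle I anticipate is the local weight-counting identity in the second paragraph: one must argue that at a vertex $p\in F\cap H$ the transverse edges genuinely reorganize as claimed, with no weight double-counted and none omitted. This hinges on the face structure being governed by edge-colors — so that tangency to $F$, $H$, $F\vee H$, and $E$ is detected by membership of color-classes — which is exactly where the hypotheses of general position, evenness/bipartiteness, and the resulting balancedness (Proposition~\ref{propColorability}, Corollary~\ref{corFacets}) enter. Once the edges of $\str(p)$ are indexed by colors and each face corresponds to a subset of colors with $F\vee H$ corresponding to the union and $E$ to the intersection, the identity reduces to the tautology that the multiset union of two color-subsets equals the union plus the intersection, which makes the products match.
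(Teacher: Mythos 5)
Your proposal is correct and follows essentially the same route as the paper: the paper's proof is a one-line localization argument, checking both relations at each fixed point $p\in X^T$ and citing Masuda--Panov (Lm.~6.3 of \cite{MasPan}) for exactly the local weight bookkeeping that you carry out explicitly. (One intermediate sentence of yours --- that the edges transverse to $E$ are those tangent to $F\vee H$ but transverse to $F$ or to $H$ --- is imprecise, since edges transverse to $F\vee H$ are also transverse to $E$; but the inclusion--exclusion multiset identity you invoke in your final paragraph is the correct statement and is what makes the two products of weights agree.)
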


The proof easily follows by localizing the relation to each fixed point $p$, see \cite[Lm.6.3]{MasPan}. The assignment $v_F\mapsto \tau_F$ therefore defines a homomorphism
\begin{equation*}
\varphi\colon \ZQ[\GX]\to H^*_T(X).
\end{equation*}

\begin{lem}
The map $\varphi$ is surjective.
\end{lem}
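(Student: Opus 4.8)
The plan is to show that the image of $\varphi$ contains a set of generators of $H^*_T(X)$ as an $H^*(BT)$-algebra, and then invoke the GKM description together with equivariant formality to conclude surjectivity. Since $H^*_T(X)\cong H^*_T(\GX)$ by Theorem~\ref{thmGKM}, it suffices to prove that the Thom classes $\tau_F$, ranging over all faces $F$ of $\GX$, generate the subring $H^*(\GX)\subset\bigoplus_{p\in V}H^*(BT)$ as a module over $H^*(BT)$. The natural tool is the orbit-type filtration and the associated spectral sequence (or equivalently the Atiyah--Bredon--Franz--Puppe sequence) exploited already in the proof of Proposition~\ref{propAcyclicityMain}.

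First I would set up the induction on the equivariant skeleta, using the fact (guaranteed by equivariant formality together with Lemma~\ref{lemEquivFormFixedPoints}) that $H^*_T(X)$ is a free $H^*(BT)$-module with $H^*_T(X)\cong H^*(BT)\otimes H^*(X)$. The key input is that the filtration $Q_0\subset Q_1\subset\cdots\subset Q_{n-1}=Q$ is, by Proposition~\ref{propAcyclicityMain} and the general-position hypothesis, essentially a homology cell filtration: each relative group $H^*_T(X_r,X_{r-1})$ splits as a direct sum over the rank-$r$ faces $F$, with each summand $H^*_T(X_F,(X_F)_{-1})$ being a rank-one free module generated in the appropriate degree. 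I would then identify the generator of the summand attached to $F$ with (the image of) the Thom class $\tau_F$; this is exactly the content of the final paragraph of the preceding Construction, where $\tau_F$ is realized as the equivariant Gysin image of $1\in H^0_T(X_F)$. Because the sequence~\eqref{eqABseqForX} is exact for equivariantly formal actions, the connecting maps let one lift each such relative generator to a genuine class in $H^*_T(X)$, and the lifts can be taken to be $H^*(BT)$-linear combinations of Thom classes of faces of rank $\leqslant r$.

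The inductive step then reads: assuming the image of $\varphi$ surjects onto the associated graded piece $H^*_T(X_r)$ for all ranks below $r$, the relative classes $\tau_F$ for $\rk F = r$ provide the missing generators at level $r$, so $\varphi$ surjects onto $H^*_T(X_r)$; taking $r=n-1$ gives surjectivity onto all of $H^*_T(X)$. Here the hypothesis that $\GX$ has facets (Corollary~\ref{corFacets}) is essential, since it guarantees that the codimension-one classes $\tau_F$ really exist as elements of degree $2$, so that the generating set at each filtration level is accounted for by Thom classes indexed by genuine faces rather than by ad hoc GKM classes.

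The main obstacle I anticipate is the bookkeeping in the freeness argument: one must show not merely that the Thom classes span each relative group, but that their lifts together with the lower-rank generators span $H^*_T(X_r)$ over $H^*(BT)$ without gaps, which requires controlling the extension problem in the spectral sequence. This is where equivariant formality does the heavy lifting: the collapse of~\eqref{eqSerreSS} at $E_2$ forces the filtration quotients of $H^*_T(X)$ to be free, so there are no nontrivial extensions, and a dimension count (matching the number of faces of each rank against the Betti numbers of $X$) closes the argument. I would verify this count by comparing with the complexity-zero case treated in~\cite{MasPan}, where the analogous surjectivity is established by precisely this Thom-class generation scheme; the general-position and facet hypotheses are designed so that the combinatorics of $S(\GX)$ mimics the complexity-zero situation closely enough for that argument to transfer verbatim.
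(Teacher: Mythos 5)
Your opening reduction is where the argument breaks. You claim that it suffices to prove that the Thom classes $\tau_F$ generate $H^*_T(X)$ as a module over $H^*(BT)$, but $\varphi\colon \ZQ[\GX]\to H^*_T(X)$ is a homomorphism of $\Qo$-algebras, so its image is only the $\Qo$-subalgebra generated by the $\tau_F$'s; it is not a priori closed under multiplication by $\pi^*H^*(BT)$, where $\pi^*\colon H^*(BT)\to H^*_T(X)$ is the structure map. Concretely, in degree two the elements of $\ZQ[\GX]_2$ are spanned by the $v_G$ with $G$ a facet, so module generation only yields $H^2_T(X)=\pi^*H^2(BT)+\mathrm{span}_\Qo\{\tau_G\mid G\ \text{a facet}\}$, whereas surjectivity of $\varphi$ in degree two requires the stronger inclusion $\pi^*H^2(BT)\subset \mathrm{span}_\Qo\{\tau_G\}$. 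This is precisely the second half of the paper's proof (phrased there as ``$H^2(BT)$ is generated by $\tau_G$'s over $\ZQ$ where $G$'s are facets''), and it is a genuinely separate claim: at a fixed point $p$ the facet classes through $p$ restrict to the $n$ tangent weights, which span $H^2(BT;\Qo)\cong\Qo^{n-1}$ by general position, but since $n>n-1$ the local expression of a constant class as a combination of these weights is non-unique, and one must produce globally consistent coefficients. Your proposal never addresses this point, so even granting your filtration argument in full, surjectivity of $\varphi$ does not follow. Once the degree-two statement is available, it combines with module generation exactly as in the paper: every class is an $H^*(BT)$-combination of Thom classes, and every coefficient is a polynomial in degree-two elements, hence itself a polynomial in facet Thom classes.

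Two further steps in your filtration argument do not survive the passage from complexity zero to complexity one. First, the summand attached to the top face is not a rank-one free module: the unique face of rank $n-1$ is $Q$ itself, which has complexity one, so by item (2) of Proposition~\ref{propAcyclicityMain} the pair $(Q,Q_{n-2})$ is not a homology cell, and its ``Thom class'' is $\tau_{\GX}=1$, of degree $0$, which cannot serve as the missing top-level generator. Second, your concluding dimension count, ``matching the number of faces of each rank against the Betti numbers of $X$'' as in \cite{MasPan}, cannot transfer verbatim: in complexity zero the face ring maps isomorphically onto $H^*_T(X)$, so the number of facets equals $\dim_\Qo H^2_T(X)$, while here $\varphi$ has a nontrivial kernel containing the linear form $\eta$ (Proposition~\ref{propLinearForm}), so the number of facets is $\dim_\Qo H^2_T(X)+1$. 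The facet and general-position hypotheses make the combinatorics \emph{close} to the complexity-zero case, but the discrepancy by the relation $\eta$ is exactly what the rest of Section~\ref{secFaceRings} is about, and it defeats a verbatim transfer of the counting argument.
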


\begin{proof}
The same argument as \cite[Prop.7.4]{MasPan} shows that $H^*_T(X)$ is generated by $\tau_F$'s as a module over $H^*(BT)$ and $H^2(BT)$ is generated by $\tau_G$'s over $\ZQ$ where $G$'s are facets. This implies the lemma.
\end{proof}

\begin{prop} \label{propLinearForm}
Assume that $X$ is a GKM manifold of dimension $2n$, such that the action has complexity one in general position, and $\GX$ has facets. Then there is a non-zero linear form $\eta\in \ZQ[\GX]$ such that $\varphi(\eta)=0$ and $\varphi$ induces an isomorphism
\[
\bv\colon \ZQ[\GX]/(\eta)\to H^*_T(X).
\]
\end{prop}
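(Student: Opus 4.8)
The plan is to establish the isomorphism
\[
\bv\colon \ZQ[\GX]/(\eta)\to H^*_T(X)
\]
by combining the surjectivity of $\varphi$ (already proved) with a Hilbert-series/dimension count, following the strategy of~\cite{MasPan} adapted to complexity one. First I would identify the kernel of $\varphi$. Since $H^*_T(X)$ is a free $H^*(BT)\cong\Qo[v_1,\ldots,v_{n-1}]$-module of rank $|X^T|$ (by Lemma~\ref{lemEquivFormFixedPoints} and equivariant formality), and $\varphi$ is a surjective homomorphism of $H^*(BT)$-algebras, the task reduces to showing that $\ker\varphi$ is exactly the principal ideal generated by a single linear element $\eta$.

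Next I would produce the linear element $\eta$ and show $\varphi(\eta)=0$. The degree-two part $\ZQ[\GX]_2$ is spanned by the $v_G$ where $G$ runs over the facets of $\GX$, and $\varphi(v_G)=\tau_G$ is the Thom class of the facet $G$. The relation among the facet Thom classes should come from the fact that the complexity of the action is one: geometrically, the sum of facet Thom classes through a fixed point, weighted appropriately, must encode a linear dependence because at each fixed point there are $n$ tangent weights spanning only an $(n-1)$-dimensional lattice. Concretely, for each fixed point $p$ the $n$ weights $\alpha_{p,1},\ldots,\alpha_{p,n}\in H^2(BT)$ satisfy exactly one (up to scale) linear relation $\sum_i c_{p,i}\alpha_{p,i}=0$; I would use the colorability/facet structure from Corollary~\ref{corFacets} to promote these pointwise relations into a single global linear form $\eta=\sum_G a_G v_G$ whose localization $\varphi(\eta)$ vanishes at every fixed point, hence $\varphi(\eta)=0$ in $H^*_T(X)$. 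Showing that $\eta$ is nonzero, and that the coefficients $a_G$ can be chosen consistently across the whole graph, is where the evenness/balanced hypothesis on $\GX$ enters.

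The main obstacle, and the heart of the proof, is to show that $(\eta)$ is the \emph{entire} kernel, i.e.\ that $\bv$ is injective. The clean way is a Hilbert-series comparison. Since $S(\GX)^*$ is a simplicial poset (by the facet structure from Corollary~\ref{corFacets}), its face ring $\ZQ[\GX]$ is a free module over a polynomial subring of the appropriate size, and I would compute the Hilbert series of $\ZQ[\GX]/(\eta)$ from the $f$-vector of $S(\GX)^*$. On the other side, the Hilbert series of $H^*_T(X)$ is $(1-t^2)^{-(n-1)}\sum_{p\in X^T}t^{\deg}$, governed by the Poincar\'e polynomial of $X$. The plan is to check that dividing by $\eta$ drops the Krull dimension by one, so that $\ZQ[\GX]/(\eta)$ becomes a free module over $H^*(BT)=\Qo[n-1]$, and that its graded ranks match those of $H^*_T(X)$ term by term. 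A surjection of finitely generated graded modules over $\Qo[n-1]$ with equal Hilbert series is forced to be an isomorphism, which yields injectivity.

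To make the Hilbert-series matching rigorous I would verify that $\eta$ is a regular (non-zero-divisor) element of $\ZQ[\GX]$; this is precisely the assertion in the statements of Theorem~\ref{thmMainIntroFaceRing} and Theorem~\ref{thmFacetsRing} that $\eta$ is a \emph{regular linear element}. Regularity follows once one knows $\ZQ[\GX]$ is Cohen--Macaulay of Krull dimension $n$ and that $\eta$ is part of a linear system of parameters; the Cohen--Macaulayness should be deduced from the acyclicity of the skeleta and order ideals established in Theorem~\ref{thmMainIntroAcycl}, via Reisner-type criteria for simplicial posets. I expect the delicate points to be exactly these two: (i) constructing $\eta$ with globally consistent, nonzero coefficients, where I would lean on the balanced coloring, and (ii) verifying regularity of $\eta$, where I would invoke the Cohen--Macaulay property coming from the acyclicity results of the earlier sections. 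Once both are in hand, the surjectivity already established plus the Hilbert-series equality close the argument.
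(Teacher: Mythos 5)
Your construction of $\eta$ (a pointwise linear relation among the $n$ tangent weights at each fixed point, glued consistently over the connected graph using the facet structure) matches what actually happens, and your closing step---a graded surjection between spaces with equal Hilbert series is an isomorphism---is sound. The genuine gap is the Cohen--Macaulay input on which your entire injectivity argument rests. You propose to obtain Cohen--Macaulayness of $\ZQ[\GX]$ from Theorem~\ref{thmMainIntroAcycl} ``via Reisner-type criteria'', but this does not work. Reisner's criterion must be applied to the simplicial poset $S(\GX)^*$, whose vertices are the \emph{facets} of $\GX$; these facets are not geometric (they are produced combinatorially by Corollary~\ref{corFacets} and carry no face submanifold), so neither the full complex $|S(\GX)^*|$ nor the links of its vertices (the face posets of the facet subgraphs) are of the form $S(X)_r$ or $S(X)_{<s}$, and the acyclicity theorem says nothing about them. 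Even on the geometric part the bounds are too weak: the theorem gives $\min(r-1,j+1)$-acyclicity of $S(X)_r$, so for $r=n-2$ only $(n-3)$-acyclicity, whereas Reisner requires acyclicity of the $(n-1)$-dimensional complex $|S(\GX)^*|$ up to dimension $n-2$. Worse, the logic in the paper runs in the opposite direction: the Cohen--Macaulay (indeed Gorenstein*) property of $\ZQ[\GX]$ is \emph{deduced from} Proposition~\ref{propLinearForm} in Remark~\ref{remGorenstein}, using freeness of $H^*_T(X)$ over $H^*(BT)$; it is not available beforehand, and no independent proof of it exists in the paper's framework. (Granting Cohen--Macaulayness, the rest of your outline does close: $\eta$ together with lifts of the $H^*(BT)$-generators is a linear system of parameters by the restriction-to-faces criterion, and the multiplicity of $\ZQ[\GX]$ equals the number of top simplices of $S(\GX)^*$, i.e. $|X^T|=\dim_{\Qo}H^*(X)$, which yields the term-by-term matching you need. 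So the gap is localized at that single input.)

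The paper avoids all of this by a localization diagram instead of a Hilbert-series count: it maps both $\ZQ[\GX]$ and $H^*_T(X)$ into direct sums over the fixed points, notes that the vertical map is injective on the cohomology side by equivariant formality (GKM) and on the face-ring side by the algebra-with-straightening-law property of face rings, and then checks that after killing $\eta$ the bottom horizontal map becomes an isomorphism. This last point is a purely local computation: the polynomial ring on the $n$ facet classes through a fixed point $p$, modulo the single linear relation among them (all of whose coefficients are nonzero by general position), is a polynomial ring in $n-1$ variables, i.e. $H^*(BT)$. Commutativity of the diagram then forces $\bv$ to be injective, with no Cohen--Macaulayness, no regular sequences, and no $f$-vector identity. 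If you want to keep your route, you would first have to prove the Cohen--Macaulay property of $\ZQ[\GX]$ directly, and the only visible way to do that is essentially the paper's localization argument---so the detour gains nothing.
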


\begin{proof}
We consider the commutative diagram
\begin{equation} \label{eq:CD}
\begin{gathered}
\xymatrix{
\ZQ[\GX]\ar[r]^\varphi \ar[d]_s & H^*_T(X)\ar[d]^r\\
\bigoplus\limits_{p\in X^T}\ZQ[\GX]/(v_H\mid p\notin H) \ar[r]^(0.61){\psi} & \bigoplus\limits_{p\in X^T} H^*(BT),
}
\end{gathered}
\end{equation}
%
where $\psi$ is induced by $v_F\to \tau_F(p)$ for each $p\in X^T$, and $\psi$ is induced by $v_F\to \tau_F(p)$ for each $p\in X^T$. The vertical map $r$ is injective since the action is equivariantly formal (see e.g. the GKM model given by Theorem~\ref{thmGKM}). The vertical map $s$ is also injective, this follows from the fact that the face algebra is an algebra with straightening law (e.g. see~\cite[Thm.3.5.6]{BPnew} and the remark after that statement).

Notice that each summand $\ZQ[\GX]/(v_H\mid p\notin H)$ in the expression on the left is isomorphic to the polynomial algebra in $n$ generators (see explanation below). Since $\dim T=n-1<n$, the summand $H^*(BT)$ on the right is a polynomial algebra in $n-1$ generators. It follows that the map $\psi$ is surjective but not injective, even in degree two. Therefore the commutativity of the above diagram implies that $\ker\varphi$ has a nonzero linear form $\eta=\sum_{i=1}^m c_i\tau_i$ where $c_i\in \ZQ$ and $\tau_i$'s are the Thom classes of the facets of $\GX$. Then $r(\varphi(\eta))=0$ and hence $\sum_{i=1}^mc_i\tau_{i}(p)=0$ for any $p\in X^T$. Here $\tau_{i}(p)\neq0$ if and only if $p$ is a vertex of the facet corresponding to $\tau_i$.
Since $\tau_{i}(p)$'s span $H^2(BT)$ which is of rank $n-1$, the coefficient vector $(c_1,\dots,c_m)$ is uniquely determined up to scalar multiple. This shows that the epimorphism
\[
\bv\colon \ZQ[\GX]/(\eta)\to H^*_T(X)
\]
induced from $\varphi$ is an isomorphism on degree two.

Note that any $c_i$ is nonzero. Indeed, for any fixed point $p\in X^T$ we have $n$ tangent weights $\alpha_{p,1},\ldots,\alpha_{p,n}\in \Hom(T,T^1)\cong \Zo^{n-1}$ attached to this point. There is a unique (up to multiplier) linear relation on these vectors $\sum_{j\in [n]}c'_j\alpha_{p,j}=0$, and the coefficients $c'_j$ are nonzero, since every $n-1$ of the weights are linearly independent, see details in~\cite{AyzCompl}. Each number $c'_j$ is the number $c_i$ corresponding to the facet transversal to the weight $\alpha_{p,j}$ at $p$.

By moding out the ideals generated by $\eta$ in the commutative diagram \eqref{eq:CD} we get a commutative diagram
\begin{equation} \label{eq:CD2}
\begin{gathered}
\xymatrix{
\ZQ[\GX]/(\eta)\ar[r]^\bv \ar[d]_{\bar{s}} & H^*_T(X)\ar[d]^r\\
\bigoplus\limits_{p\in X^T}\ZQ[\GX]/(\eta)/(v_H\mid p\notin H) \ar[r]^(0.62){\bar{\psi}} & \bigoplus\limits_{p\in X^T} H^*(BT)
}
\end{gathered}
%
\end{equation}
where $\bar{s}$ is injective since so is $s$.

{\it Claim.} $\bar{\psi}$ is an isomorphism. 

Indeed, since $\eta=\sum_{i=1}^mc_i\tau_i$ and $\tau_i(p)\neq0$ if and only if the corresponding facet contains $p$, we have
\begin{equation} \label{eq:1-4}
\ZQ[\GX]/(\eta)/(v_H\mid p\notin H)=\ZQ[\tau_i\mid i\in I(p)]/\left(\sum\nolimits_{i\in I(p)} c_i\tau_i\right)
\end{equation}
where $I(p)=\{i\mid \tau_i(p)\neq0\}$. Since $|I(p)|=n$ and the coefficients $c_i$ are non-zero, the ring in \eqref{eq:1-4} is isomorphic to $H^*(BT)$. This together with the surjectivity of $\bar{\psi}$ implies that $\bar{\psi}$ is an isomorphism, proving the claim.
Since both $\bar{s}$ and $\bar{\psi}$ are injective, the commutativity of the diagram above shows that the epimorphism $\bv$ is indeed injective on any degree, proving the theorem.
\end{proof}

Proposition~\ref{propLinearForm} proves Theorem~\ref{thmFacetsRing}. Combining it with Corollary~\ref{corFacets} from the previous section, we obtain the proof of Theorem~\ref{thmMainIntroFaceRing}.

\begin{rem}\label{remGorenstein}
Since $X$ is equivariantly formal, the equivariant cohomology algebra $H^*_T(X)\cong\ZQ[\GX]/(\eta)$ is a free module over $H^*(BT)$, which is a subalgebra freely generated by some linear forms $\theta_1,\ldots,\theta_{n-1}$. Hence $\theta_1,\ldots,\theta_{n-1}$ is a regular sequence in $\ZQ[\GX]/(\eta)$. Notice that $\eta$ is a regular element in $\ZQ[\GX]$ since its localization to each fixed point $p\in X^T$ is nonzero. Therefore, the face ring $\ZQ[\GX]$ has a regular sequence $\eta,\tilde{\theta}_1,\ldots,\tilde{\theta}_{n-1}$, where $\tilde{\theta}_i$ is a lift of $\theta_i$ in $\ZQ[\GX]$. Since the quotient $\ZQ[\GX]/(\eta,\tilde{\theta}_1,\ldots,\tilde{\theta}_{n-1})\cong H^*(X)$ is finite dimensional, this is a maximal regular sequence. Therefore the face ring $\ZQ[\GX]$ is Cohen--Macaulay. Moreover, since the quotient is a Poincare duality algebra, the ring $\ZQ[\GX]$ is Gorenstein. Therefore, simplicial poset $S(\GX)^*$ is Gorenstein~\cite{Stan}. It is also a Gorenstein* poset since the top-degree component of $\ZQ[\GX]/(\eta,\tilde{\theta}_1,\ldots,\tilde{\theta}_{n-1})\cong H^*(X)$ has degree $2n$. This implies that geometrical realizations of $S(\GX)^*$ and all its links are homology spheres.
\end{rem}

We conclude the paper with an observation which relates cohomology of $X$ with that of its face submanifolds. Proposition~\ref{propLinearForm} implies that if $Y$ is a face submanifold of $X$ such that any intersection of faces of $\GX$ with $\GY$ is connected unless empty, then the restriction map
\begin{equation} \label{eq:2-1}
\iota^*\colon H^*_T(X)\to H^*_T(Y)
\end{equation}
is surjective when $\GX$ has facets. This is not true in general.

\begin{ex}
Consider $X=\Gr_{4,2}$ as a continuation of Example~\ref{exGrassmann}. Let $Y$ a face of $\Gr_{4,2}$ isomorphic to $\CP^1\times \CP^1$. It corresponds to an equatorial square cycle of an octahedron shown on Fig.~\ref{figMonod}, (a). The restriction map $\iota^*\colon H^2_T(\Gr_{4,2})\to H^2_T(\CP^1\times \CP^1)$ is not surjective: the rank of the target module is obviously bigger than the rank of the source. However, in this example surjectivity holds in higher degrees.
\end{ex}

\begin{prop}
Let $X$ be a GKM manifold of complexity one in general position. If any intersection of geometric faces of $\GX$ with $\GY$ is connected unless empty, then $\iota^*$ in \eqref{eq:2-1} is surjective in degrees $\geqslant 4$.
\end{prop}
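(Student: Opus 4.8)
The plan is to reduce to complexity zero and then realize the relevant cohomology classes of $Y$ as restrictions of Thom classes of faces of $\GX$. If $Y=X$ the map is the identity, so I assume $Y=X_F$ is a proper face submanifold. Its face $F$ has rank $r=\rk F\leqslant n-2$, so Lemma~\ref{lemTechJgeneral} gives $\com X_F=0$; thus $Y$ is an equivariantly formal torus manifold of complex dimension $m=r\leqslant n-2$, and by Masuda--Panov~\cite{MasPan} its equivariant cohomology is the face ring $H^*_T(Y)\cong\Qo[\GY]$ of the (dual) simplicial poset $S(\GY)^*$. In particular $\GY$ has facets, $H^*_T(Y)$ is generated over $H^*(BT)$ by the Thom classes $\tau^Y_G$ of the faces $G$ of $\GY$, and these satisfy the face-ring relations $\tau^Y_{G_1}\tau^Y_{G_2}=\sum_{E}\tau^Y_E$, the sum being over the components $E$ of $G_1\cap G_2$ (and $0$ when $G_1\cap G_2=\varnothing$). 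Since $\iota^*$ is an $H^*(BT)$-algebra homomorphism, it suffices to realize suitable Thom classes of $\GY$ in its image.

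First I would treat the faces $G$ of $\GY$ with $\codim_Y G\geqslant 2$. Fixing a vertex $p$ of $G$, the $\dim G$ edges of $G$ at $p$ together with all $n-m$ edges of $\str(p)$ not contained in $\GY$ span, by Lemma~\ref{lemm:spans}, a unique geometric face $F$ of $\GX$, the hypothesis $\codim_Y G\geqslant2$ guaranteeing $\dim F=\dim G+(n-m)\leqslant(m-2)+(n-m)=n-2$. By the connectivity assumption $F\cap\GY$ is connected, hence a single face of $\GY$, and at $p$ it is spanned exactly by the edges of $G$, so $F\cap\GY=G$. Since $\codim_X F=m-\dim G=\codim_Y G$, at every vertex of $G$ all edges transversal to $F$ already lie in $\GY$; localizing~\eqref{eqThomClass} then gives $\iota^*\tau_F=\tau^Y_G$, while $\iota^*\tau_F$ vanishes at the fixed points off $G$, which lie off $F$. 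Thus every Thom class of codimension $\geqslant2$, and every $H^*(BT)$-multiple of it, lies in $\im\iota^*$; with the relation above this already puts every product $\tau^Y_{G_1}\tau^Y_{G_2}$ of \emph{distinct} facets, and all its multiples, in the image.

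The remaining, and hardest, point is to show that the classes $v\cdot\tau^Y_G$ with $G$ a facet and $v\in H^2(BT)$ — equivalently the facet squares $(\tau^Y_G)^2$ — lie in $\im\iota^*$; these are exactly the degree-$\geqslant4$ generators not covered above, and their failure in degree two is what the Grassmannian example records (there the bad square is even excluded by the connectivity hypothesis). Here I would use \emph{non-transversal} lifts: spanning $G$ together with all but one of the edges of $\str(p)$ outside $\GY$ produces, again by Lemma~\ref{lemm:spans}, geometric faces $F_i\subset\GX$ of codimension two with $F_i\cap\GY=G$, whose restrictions are $\iota^*\tau_{F_i}=\tau^Y_G\cdot\beta_i$ for the degree-two class $\beta_i$ given on $G$ by the excluded transversal weight. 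Because $n\geqslant5$, Lemma~\ref{lemm:1-1} makes the connection act trivially on edges transversal to a face, so these transversal weights vary along $G$ only by multiples of the edge-weights of $G$; combining this with the unique general-position relation among the $n$ tangent weights at each fixed point of $X$, one should express the constant $v$ as a combination $\sum_i\mu_i\beta_i$ modulo terms supported on proper subfaces $E\subset G$ of codimension $\geqslant2$, which are already in the image. This yields $v\cdot\tau^Y_G\in\im\iota^*$, hence $(\tau^Y_G)^2\in\im\iota^*$; the remaining higher-degree monomials are then reduced to the handled cases by the face-ring relations and the expansion $\bar v=\sum_{G'}a_{G'}\tau^Y_{G'}$ of linear forms, by induction on the degree.

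Assembling the three steps, every $H^*(BT)$-module generator of $H^{\geqslant4}_T(Y)$ is realized in $\im\iota^*$, which proves surjectivity in degrees $\geqslant4$. I expect the main obstacle to be the last step: producing, from the finitely many non-transversal lifts $\tau^Y_G\cdot\beta_i$, an honest linear-form multiple $v\cdot\tau^Y_G$. This is precisely where the general-position hypothesis (through the linear relation among tangent weights at a fixed point) and the triviality of the transversal monodromy for $n\geqslant5$ are indispensable, and it is the part most sensitive to the connectivity assumption on intersections with $\GY$.
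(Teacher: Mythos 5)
Your first two steps are, almost word for word, the \emph{entire} proof given in the paper: take a face $G$ of $\GY$ with $\codim_Y G\geqslant 2$, span its transversal lift $F$ at a vertex (geometric because $\codim_X F=\codim_Y G\geqslant 2$), use the connectivity hypothesis to identify $F\cap\GY=G$, and conclude $\iota^*(\tau_F)=\tau^Y_G$; the paper stops there with ``proving the proposition''. Your third step --- the one you flag as hardest --- is exactly what that ending leaves unaddressed. Since $H^*_T(Y)$ is generated as an $H^*(BT)$-module by the Thom classes of the faces of $\GY$, surjectivity in degrees $\geqslant 4$ requires producing in $\im\iota^*$ not only $H^*(BT)\cdot\tau^Y_G$ for $\codim_Y G\geqslant 2$ and $H^{\geqslant 4}(BT)\cdot 1$, but also $t\cdot\tau^Y_G$ for every facet $G$ of $\GY$ and $t\in H^2(BT)$; the last family is genuinely not spanned by the first two (for a rank-one face $Y$, an invariant sphere, $\GY$ has no faces of codimension $\geqslant 2$ at all, while $H^4_T(Y)$ is strictly larger than $H^4(BT)\cdot 1$). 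So your diagnosis of what remains to be proved is correct, and in this respect your proposal is more complete than the paper's own argument.

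Your mechanism for that remaining step is also the right one, and it can be closed without Lemma~\ref{lemm:1-1}, hence without $n\geqslant 5$ --- which matters, since the proposition does not grant that hypothesis. Let $m$ be the rank of $Y$, fix a facet $G$ of $\GY$ and a base vertex $p$, and take your codimension-two lifts $F_1,\dots,F_{n-m}$, so that $\iota^*\tau_{F_i}=\tau^Y_G\cdot\beta_i$ where $\beta_i(p')$ is the transversal weight omitted from $F_i$ at $p'$. Then: (i) at every vertex $p'$ of $G$ distinct $F_i$ omit distinct edges, since otherwise two of them would coincide with the unique face of Lemma~\ref{lemm:spans} spanned at $p'$ by the same $n-2$ edges; (ii) each $\beta_i$ obeys the GKM congruences along $G$ by the connection axiom, so $\beta_i\in H^2_T(G)$; (iii) at any $p'$ the values $\beta_i(p')$, together with the values $\tau^G_{G''}(p')$ of the Thom classes of the facets $G''$ of $G$ passing through $p'$, are $n-1$ of the $n$ tangent weights of $X$ at $p'$, hence linearly independent by general position. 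Evaluating a vanishing linear combination at the vertices of $G$ and using (i)--(iii) shows that the classes $\{\beta_i\}\cup\{\tau^G_{G''}\}$ are linearly independent in $H^2_T(G)$; as their number $(n-m)+f_G$ ($f_G$ the number of facets of $G$) equals $\dim H^2_T(G)$ by equivariant formality and \cite{MasPan}, they form a basis. In particular every constant $t\in H^2(BT)$ can be written $t=\sum_i\mu_i\beta_i+\sum_{G''}a_{G''}\tau^G_{G''}$, and multiplying pointwise by $\tau^Y_G$ gives
\[
t\cdot\tau^Y_G=\sum_i\mu_i\,\iota^*\tau_{F_i}+\sum_{G''}a_{G''}\,\tau^Y_{G''}\in\im\iota^*,
\]
because $\tau^Y_G\cdot\tau^G_{G''}=\tau^Y_{G''}$ is one of your codimension-two classes. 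This is precisely the ``express the constant $v$'' step you left open, and it needs only $(n-1)$-independence, not triviality of transversal monodromy. Two minor inaccuracies elsewhere: over the full torus, $H^*_T(Y)$ is $\Qo[\GY]$ tensored with the polynomial algebra $H^*(BT_F)$ rather than $\Qo[\GY]$ itself (your later use of arbitrary $t\in H^2(BT)$ quietly repairs this), and producing the facet squares $(\tau^Y_G)^2$ alone would \emph{not} be equivalent to producing all $t\cdot\tau^Y_G$, since the directions coming from $H^2(BT_F)$ are not recovered from squares.
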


\begin{proof}
We think of $H^*_T(X)$ and $H^*_T(Y)$ as the cohomology of the GKM graphs $\GX$ and $\GY$. For any face $G$ of $\GY$ which is of codimension $\geqslant 2$, there exists a face $F$ of $\GX$ such that $F$ and $\GY$ intersect transversally in the vertices of~$G$. Since $F$ also has codimension $\geqslant 2$ in $X$, it is a geometric face. By assumption $G=F\cap \GY$, since there are no other connected components in the intersection. Hence $\iota^*(\tau_F)=\tau_G$, proving the proposition.
\end{proof}

\section*{Acknowledgements}
The authors thank Fedor Pavutnitskiy from whom we knew about the works of Quillen on spectral sequences of homotopy colimits. The last author would like to thank Shintar\^o Kuroki for many fruitful discussions of GKM theory. We are also grateful to Ivan Limonchenko who pointed out an inaccuracy in the proof of the second theorem, that appeared in the first version of this text. We thank the anonymous referees for many useful remarks, which helped to improve the exposition.


\begin{thebibliography}{99}

\bibitem{AdDav} A.\,Adem, J.\,F.\,Davis, \textit{Topics in transformation groups}, in Handbook of Geometric Topology, editors  R.\,J.\,Daverman, R.\,B.\,Sher, 2001.

\bibitem{Ay1} A.\,Ayzenberg, \textit{Locally standard torus actions and h'-vectors of simplicial
posets}, J. Math. Soc. Japan 68:4 (2016), 1--21 (preprint: \href{http://arxiv.org/abs/1501.07016}{arXiv:1501.07016}).

\bibitem{AyzCompl} A.\,Ayzenberg, \textit{Torus actions of complexity one and their local properties}, Proc. of the Steklov Institute of Mathematics 302:1 (2018), 16--32. 

\bibitem{AyzHP} A.\,Ayzenberg, \textit{Torus action on quaternionic projective plane and related spaces}, Arnold Math. J. 7, 243--266 (2021).

\bibitem{AyzMasEquiv} A.\,Ayzenberg, M.\,Masuda, \textit{Orbit spaces of equivariantly formal torus actions}, preprint: \href{https://arxiv.org/abs/1912.11696}{arXiv:1912.11696}.

\bibitem{AyzCher} A.\,Ayzenberg, V.\,Cherepanov, \textit{Torus actions of complexity one in non-general position}, Osaka Journal of Mathematics 58:4 (2021), 839-853. (preprint: \href{https://arxiv.org/abs/1905.04761}{arXiv:1905.04761}).

\bibitem{AyzCherep} A.\,Ayzenberg, V.\,Cherepanov, \textit{Matroids in toric topology}, preprint \href{https://arxiv.org/abs/2203.06282}{https://arxiv.org/abs/2203.06282}.


\bibitem{Bjorner} A.\,Bj\"{o}rner, \textit{Homology and Shellability of Matroids and Geometric Lattices}, In N. White (Ed.), Matroid Applications (Encyclopedia of Mathematics and its Applications) (1992), pp. 226-283. Cambridge: Cambridge University Press.

\bibitem{BjWW}
A.\,Bj\"{o}rner, M.\,L.\,Wachs, V.\,Welker, \textit{Poset fiber theorems}, Trans. Amer. Math. Soc. 357:5 (2005), 1877Ц1899.

\bibitem{BGH} D.\,Bolker, V.\,W.\,Guillemin, T.\,Holm, \textit{How is a graph like a manifold?}, preprint \href{https://arxiv.org/abs/math/0206103}{arXiv:math/0206103}.

\bibitem{Bredon} G.\,E.\,Bredon, \textit{The free part of a torus action and related numerical equalities}, Duke
Math. J. 41 (1974), 843--854.

\bibitem{BPnew} V.\,Buchstaber, T.\,Panov, Toric Topology, Math. Surveys Monogr., 204, AMS, Providence, RI, 2015.
%

\bibitem{CGK} J.\,D.\,Carlson, E.\,A.\,Gamse, Y.\,Karshon, \textit{Realization of fixed-point data for GKM actions}, preprint \href{https://www.ma.imperial.ac.uk/~jcarlson/realization.pdf}{https://www.ma.imperial.ac.uk/~jcarlson/realization.pdf}
%

\bibitem{Dieck} T.\,tom Dieck. Transformation groups. Walter de Gruyter, 1987.

\bibitem{Dugger} D.\,Dugger, \textit{A primer on homotopy colimits}, preprint
\href{http://math.uoregon.edu/~ddugger/hocolim.pdf}{http://math.uoregon.edu/~ddugger/hocolim.pdf}

\bibitem{FPOsaka} M.\,Franz, V.\,Puppe, \textit{Freeness of equivariant cohomology and mutants of compactified representations} in Contemporary Mathematics 460 ``Toric Topology''.

\bibitem{FP} M.\,Franz, V.\,Puppe, \textit{Exact cohomology sequences with integral coefficients for torus actions}, Transformation Groups 12 (2007), 65--76, preprint \href{https://arxiv.org/abs/math/0505607v2}{arXiv:math/0505607}.

\bibitem{Franz} M.\,Franz, \textit{A quotient criterion for syzygies in equivariant cohomology}, Transformation Groups 22 (2017), 933--965 (preprint \href{https://arxiv.org/abs/1205.4462v4}{arXiv:1205.4462})

\bibitem{GKM} M.\,Goresky, R.\,Kottwitz, R.\,MacPherson, \textit{Equivariant cohomology, Koszul duality, and the localization theorem}, Invent. math. 131 (1998), 25--83.

\bibitem{GHZ} V.\,Guilleminn, T.\,Holm, C.\,Zara \textit{A GKM description of the equivariant cohomology ring of a homogeneous space}, J. Algebr. Comb. 23 (2006), 21--41.

\bibitem{GZ} V.\,Guilleminn, C.\,Zara \textit{One-skeleta, Betti number and equivariant cohomology}, Duke Math. J. 107 (2001), 283--349.

\bibitem{Hsiang} Wu Yi Hsiang, Cohomology Theory of Topological Transformation Groups, Springer-Verlag, 1975.
%

\bibitem{Illman2} S.\,Illman, \textit{The Equivariant Triangulation Theorem for Actions of Compact Lie Groups},  Mathematische Annalen 262 (1983), 487--502.

\bibitem{Joswig} M.\,Joswig, \textit{The group of projectivities and colouring of the facets of a simple polytope}, Russian Math. Surveys 56:3 (2001), 584--585.

\bibitem{Kur} S.\,Kuroki, \textit{Introduction to GKM theory}, Trends in Mathematics - New Series
11:2 (2009), 111--126.

\bibitem{MasPan} M.\,Masuda, T.\,Panov, \textit{On the cohomology of torus manifolds}, Osaka J. Math. 43 (2006), 711--746 (preprint
\href{https://arxiv.org/abs/math/0306100}{arXiv:math/0306100}).

\bibitem{QuilAlgK} D.\,Quillen, \textit{Higher algebraic K-theory. I}, in Algebraic K-theory, I: Higher K-theories (Proc. Conf., Battelle Memorial Inst., Seattle, Wash., 1972), Lecture Notes in Math, 341, Berlin, New York: Springer-Verlag, pp. 85--147.

\bibitem{Stan} R.\,P.\,Stanley, \textit{f-vectors and h-vectors of simplicial posets}, J. Pure Appl. Algebra 71
(1991), 319--331.

\bibitem{WZZ}
V.\,Welker, G.\,M.\,Ziegler, R.\,T.\,\v{Z}ivaljevi\'{c}, \textit{Homotopy colimits --- comparison lemmas for combinatorial applications}, Journal fur die reine und angewandte Mathematik (Crelles Journal) 999:509 (1999), 117--149.
\end{thebibliography}
\end{document}